\numberwithin{theorem}{section}
\numberwithin{equation}{section}
\DeclareMathOperator{\divg}{div}
\DeclareMathOperator{\curlvect}{\overrightarrow{\nabla}\times}
\DeclareMathOperator{\I}{I}
\DeclareMathOperator{\rot}{\mathbf{rot}}
\DeclareMathOperator{\grad}{grad}
\DeclareMathOperator{\dist}{dist}
\newcommand{\curl}[1]{\overrightarrow{\nabla}\times#1}
\newcommand{\df}[1]{\partial#1}
\newcommand{\dftk}[1]{\frac{\partial#1}{\partial\bm{t}_{k}}}
\newcommand{\dfnk}[1]{\frac{\partial#1}{\partial\bm{n}_{k}}}
\newcommand{\dftkp}[1]{\frac{\partial#1}{\partial\bm{t}_{k+1}}}
\newcommand{\dftkm}[1]{\frac{\partial#1}{\partial\bm{t}_{k-1}}}
\newcommand{\dfte}[1]{\frac{\partial#1}{\partial\bm{t}_{e}}}
\newcommand{\Rom}[1]{\uppercase\expandafter{\romannumeral#1}}
\newcommand{\TheTitle}{Global superconvergence of the lowest order mixed finite element on mildly structured meshes} 
\newcommand{\TheTitleShort}{Superconvergence of the lowest order mixed element} 
\newcommand{\TheAuthors}{Yu-Wen Li}
\headers{\TheTitleShort}{\TheAuthors}
\title{{\TheTitle}}
\author{
  Yu-Wen Li\thanks{Department of Mathematics, University of California, San Diego, La Jolla, 
CA 92093. \email{yul739@ucsd.edu}.}
}
\begin{document}

\maketitle

\begin{abstract}
In this paper, we develop global superconvergence estimates for the lowest order Raviart--Thomas mixed finite element method for second
order elliptic equations with general boundary conditions on triangular meshes, where most pairs of adjacent triangles form approximate parallelograms. In particular, we prove the $L^{2}$-distance between the numerical solution and canonical interpolant for the vector variable
is of order $1+\rho$, where $\rho\in(0,1]$ is dependent on the mesh structure. By a cheap local postprocessing operator $G_{h}$, we prove the $L^{2}$-distance between the exact solution and the postprocessed numerical solution for the vector variable is of order $1+\rho$. As a byproduct, we also obtain the superconvergence estimate for Crouzeix--Raviart nonconforming finite elements on triangular meshes of the same type. 
\end{abstract}

\begin{keywords}
superconvergence, mildly structured grids, mixed methods, Raviart--Thomas elements,
Crouzeix--Raviart elements, a posteriori error estimation
\end{keywords}

\begin{AMS}
  65N50, 65N30
\end{AMS}

\section{Introduction}\label{intro}
Let $\Omega\subset\mathbb{R}^{2}$ be a bounded domain with Lipschitz boundary $\partial\Omega$. For simplicity of presentation, we
assume $\Omega$ is a polygon. The Sobolev seminorms and norms are defined by
\begin{equation*}
\begin{aligned}
&|u|_{k,p,\Omega}=\left(\sum_{|\alpha|=k}\int_{\Omega}|\partial^{\alpha}u|^{p}\right)^{\frac{1}{p}},\quad
||u||_{k,p,\Omega}=\left(\sum_{l=0}^{k}|u|_{l,p,\Omega}^{p}\right)^{\frac{1}{p}},\\
&|u|_{k,\Omega}=|u|_{k,2,\Omega},\quad||u||_{k,\Omega}=||u||_{k,2,\Omega}.
\end{aligned}
\end{equation*}
Sobolev norms with $\infty$-index, norms of vector/matrix-valued functions, and
fractional order norms are generalized in usual ways.

We consider the following second order elliptic equation:
\begin{subequations}\label{mix:c1}
\begin{align}
&-\divg(\bm{A}(\bm{x})\nabla u+\bm{b}(\bm{x})u)+c(\bm{x})u=f(\bm{x}),\quad\bm{x}\in\Omega,\\
&u=g(\bm{x}),\quad\bm{x}\in\partial\Omega,
\end{align}
\end{subequations}
where $\bm{A}$ is symmetric and uniformly elliptic, $\bm{A}, \bm{b}, c$ are sufficiently smooth on $\overline{\Omega}$.
Let
\begin{equation*}
\bm{p}=\bm{A}(\bm{x})\nabla u+\bm{b}(\bm{x})u
\end{equation*}
and set
\begin{equation*}
\bm{\alpha}=\bm{A}(\bm{x})^{-1},\quad\bm{\beta}=\bm{\alpha}(\bm{x})\bm{b}(\bm{x}).
\end{equation*} 
\cref{mix:c1} can be written in the form of the first order system:
\begin{subequations}\label{mix:c}
\begin{align}
&\bm{\alpha}\bm{p}-\bm{\beta}u-\nabla u=0,\quad\bm{x}\in\Omega,\\
&-\divg\bm{p}+cu=f,\quad\bm{x}\in\Omega,\\
&u=g,\quad\bm{x}\in\partial\Omega.
\end{align}
\end{subequations}

Denote
\begin{equation*}
\mathcal{Q}=\{\bm{q}\in L^{2}(\Omega)^{2}:
\divg\bm{q}\in L^{2}(\Omega)\},\quad\mathcal{V}=L^{2}(\Omega).
\end{equation*}
Let $(\cdot,\cdot)$ and $\langle\cdot,\cdot\rangle$ denote the $L^{2}(\Omega)$ and $L^{2}(\partial\Omega)$ inner product, respectively.
Let $\bm{n}$ denote the outward unit normal to $\partial\Omega$. The mixed formulation for \cref{mix:c} is to find $\{\bm{p},u\}\in\mathcal{Q}\times\mathcal{V}$, such that
\begin{subequations}\label{mix:v}
\begin{align}
&(\bm{\alpha}\bm{p},\bm{q})-(\bm{q},\bm{\beta}u)+(\divg\bm{q},u)=\langle\bm{q}\cdot\bm{n},g\rangle,\label{mix:va}\\
&-(\divg\bm{p},v)+(cu,v)=(f,v),
\end{align}
\end{subequations}
for each pair $\{\bm{q},v\}\in\mathcal{Q}\times\mathcal{V}$.
Let $\{\mathcal{T}_{h}\}$ be a family of triangulations of $\Omega$, where $0<h<1$ is the mesh size. Let $\mathcal{P}_{p}(\tau)$
denote the set of polynomials of degree $\leq p$
on $\tau$. Denote
\begin{equation}\label{formRT}
\mathcal{RT}_{0}(\tau):=\{\bm{a}+a\bm{x}: \bm{a}\in\mathbb{R}^{2},\ a\in\mathbb{R}\}.
\end{equation}
The lowest order Raviart--Thomas (RT) finite element spaces are defined by
\begin{displaymath}
\begin{aligned}
&\mathcal{Q}_{h}:=\left\{\bm{q}_{h}\in\mathcal{Q}: \bm{q}_{h}|_{\tau}\in\mathcal{RT}_{0}(\tau),\ \forall\tau\in\mathcal{T}_{h}\right\},\\
&\mathcal{V}_{h}:=\{v_{h}\in\mathcal{V}: v_{h}|_{\tau}\in\mathcal{P}_{0}(\tau),\ \forall\tau\in\mathcal{T}_{h}\},
\end{aligned}
\end{displaymath}
The mixed finite element approximation to the problem \cref{mix:v} is to find $\{\bm{p}_{h},u_{h}\}\in\mathcal{Q}_{h}
\times\mathcal{V}_{h}$, such that
\begin{subequations}\label{mix:dv}
\begin{align}
&(\bm{\alpha}\bm{p}_{h},\bm{q}_{h})-(\bm{q}_{h},\bm{\beta}u_{h})
+(\divg\bm{q}_{h},u_{h})=\langle\bm{q}_{h}\cdot\bm{n},g\rangle,\quad\bm{q}_{h}\in\mathcal{Q}_{h},\label{mix:dva}\\
&-(\divg\bm{p}_{h},v_{h})+(cu_{h},v_{h})=(f,v_{h}),\quad v_{h}\in\mathcal{V}_{h}.
\end{align}
\end{subequations}
Under the assumption that \cref{mix:c} is solvable for $\{f, g\}\in L^{2}(\Omega)\times H^{\frac{3}{2}}(\Omega)$ and that
\begin{equation}\label{fullregularity}
||u||_{2,\Omega}\lesssim||f||_{0,\Omega}+||g||_{\frac{3}{2},\Omega},
\end{equation}
Douglas and Roberts \cite{Douglas1985} proved the well-posedness and a priori error estimates for the method \cref{mix:dv}.

In this paper, we shall prove supercloseness/superconvergence  results for $||\Pi_{h}\bm{p}-\bm{p}_{h}||_{0,\Omega}$
and $||\divg(\Pi_{h}\bm{p}-\bm{p}_{h})||_{0,\Omega}$, where $\Pi_{h}$
and $P_{h}$ are the interpolation operators for the lowest order RT element. In particular, we shall prove that
\begin{subequations}\label{superup}
\begin{align}
&||\Pi_{h}\bm{p}-\bm{p}_{h}||_{0,\Omega}\lesssim
h^{1+\rho}||u||_{4+\varepsilon,\Omega},\quad\varepsilon>0,\label{resultp}\\
&||\divg(\Pi_{h}\bm{p}-\bm{p}_{h})||_{0,\Omega}\lesssim h^{2}||u||_{3,\Omega},\label{superdivintro}
\end{align}
\end{subequations}
where $\rho=\min(1,\alpha,\sigma/2)$.
\cref{superdivintro} holds on general shape regular meshes while \cref{resultp} holds on quasi-uniform $\{\mathcal{T}_{h}\}$ satisfying the
piecewise $(\alpha,\sigma)$-condition. The $(\alpha,\sigma)$-grid or its simplified versions have been considered by many authors 
(cf. \cite{Lin1985,LMW2000,BaXu2003,HuangXu2008,Xu2003} and references therein). Roughly speaking, $\mathcal{T}_{h}$ is said to be 
an $(\alpha,\sigma)$-grid if most pairs of adjacent triangles in $\mathcal{T}_{h}$ form
$\mathcal{O}(h^{1+\alpha})$ approximate parallelograms except for a region of measure $\mathcal{O}(h^{\sigma})$
(cf. \cref{alphasigma}). \cref{resultp} has several generalizations. For example, the quasi-uniformity assumption can be removed under
the pure Neumann boundary condition or at the expense of a slower superconvergence rate $\rho$, see \cref{sec:varerr,superclosepNeumann} for details.

\cref{superup} is closely related to the superconvergence of the finite element solution to the exact solution. For example, by postprocessing $\bm{p}_{h}$ by a simple local averaging operator $G_{h}$ proposed in \cite{Brandts1994}, we achieve the following superconvergence estimate:
\begin{equation}\label{superpost}
||\bm{p}-G_{h}\bm{p}_{h}||_{0,\Omega}\lesssim
h^{1+\rho}||u||_{4+\varepsilon,\Omega}.
\end{equation}
The recovered flux $G_{h}\bm{p}_{h}$ can be used to develop a posteriori error estimates.
Due to the superconvergence \cref{superpost}, $||G_{h}\bm{p}_{h}-\bm{p}_{h}||_{0,\Omega}$ is known to be
an asymptotically exact a posteriori estimator for
$||\bm{p}-\bm{p}_{h}||_{0,\Omega}$ (cf. \cite{Brandts1994,BaXu2003II,Ainsworth2000}), that is,
\begin{equation*}
\lim_{h\to0}\frac{||G_{h}\bm{p}_{h}-\bm{p}_{h}||_{0,\Omega}}{||\bm{p}-\bm{p}_{h}||_{0,\Omega}}=1.
\end{equation*}
As a byproduct, \cref{superpost} also gives the following superconvergence estimate for Crouzeix--Raviart (CR) nonconforming
finite elements (cf. \cref{superCR}):
\begin{equation}\label{superpostCR}
||\nabla u-G_{h}\nabla_{h}u_{h}^{CR}||_{0,\Omega}\lesssim h^{1+\rho}||u||_{4+\varepsilon,\Omega},
\end{equation}
where $u_{h}^{CR}$ is the CR finite element solution of Poisson's equation.

The study of supercloseness between the finite element interpolant and finite element solution has a long history. For the analogue of \cref{resultp} for standard Lagrange elements on mildly structured grids, see \cite{BaXu2003,Xu2003,HuangXu2008} and references therein. For superconvergence of the scalar variable $u$ in mixed methods, see \cite{Arnold1985,Brezzi1985,Wang1989} and references therein. In practice, it is frequently the case that the vector variable
$\bm{p}$ is more important than the scalar $u$. Superconvergence results of rectangular/quadrilateral mixed finite elements for the vector variable $\bm{p}$ are well established (cf. \cite{Duran1990,Ewing1991,Ewing1999}). However, corresponding superconvergence theory of triangular mixed finite elements are much less sophisticated. To our best knowledge, the only proven superconvergence estimate of triangular mixed elements for the vector variable 
are in \cite{Douglas1983,Brandts1994,Brandts2000}. In \cite{Douglas1983}, the authors  postprocessed $\bm{p}_{h}$
and achieved interior superconvergence by convolution with a Bramble--Schatz kernel \cite{Bramble1977} which is constructed on
uniform grids, i.e. in the case of $\alpha=\sigma=\infty$. 
For the lowest order RT element on uniform grids in the case that $\bm{b}=\bm{0}, c=0$ in \cref{mix:c1}, Brandts \cite{Brandts1994} proved
\begin{equation}\label{resultB}
||\Pi_{h}\bm{p}-\bm{p}_{h}||_{0,\Omega}\lesssim h^{\frac{3}{2}}(||\bm{p}||_{\frac{3}{2},\Omega}
+h^{\frac{1}{2}}|\bm{p}|_{1,\Omega}+h^{\frac{1}{2}}|\bm{p}|_{2,\Omega}),
\end{equation}
In \cite{Brandts2000}, he also proved an analogue of \cref{resultB} for second order RT elements on uniform grids in the case that $\bm{A}=I_{2\times2}, \bm{b}=\mathbf{0}, c=0$.

Our result \cref{superup} improves existing results significantly in several ways.  First, our estimate holds on general mildly structured grids instead of uniform grids. As pointed out in \cite{BaXu2003,Xu2003}, the $(\alpha,\sigma)$-condition is very flexible and satisfied by many mature finite element codes. Second, in the best case that $\rho=1$, \cref{resultp} becomes
\begin{equation*}
||\Pi_{h}\bm{p}-\bm{p}_{h}||_{0,\Omega}\lesssim h^{2}||u||_{4+\varepsilon,\Omega},
\end{equation*}
which shows that the estimate \cref{resultB} is suboptimal. This improvement results from carefully handling the boundary error, which is usually the trickiest part in global superconvergence estimates if test functions have nonzero trace. In addition, due to the cancellation of errors on boundary elements, \cref{resultp} holds on not only $(\alpha,\sigma)$-grids but also piecewise  $(\alpha,\sigma)$-grids 
(cf. \cref{defpw,piecewise}). Third, our superconvergence results allow the convection term $\bm{b}(x)\cdot\nabla u$ and reaction term $c(x)u$. Unlike the case of the standard variational formulation for elliptic equations, the error analysis of mixed methods with nonvanishing $\bm{b}, c$ is much more involved than the case $\bm{b}=\bm{0}, c=0$ (cf. \cite{Douglas1985} and \cref{bc=0}). Last, the superconvergence estimate \cref{superpostCR} for CR nonconforming elements is obtained. Since $u_{h}^{CR}$ has jump on each interior edge, it is very difficult to prove superconvergence of nonconforming methods on triangular grids directly (cf. \cite{HuMa2016} and references therein).

The key ingredient of the proof of \cref{resultp} is two fold. First, we develop the variational error expansion for RT elements on a local triangle in terms of $\bm{q}_{h}\cdot\bm{n}_{k}$, the normal trace of $\bm{q}_{h}\in\mathcal{Q}_{h}$ on $e_{k}$, where 
$\{e_{k}\}_{k=1}^{3}$ are three edges of the triangle and $\bm{n}_{k}$ is the outward unit normal to $e_{k}$.
Due to the continuity of $\bm{q}_{h}\cdot\bm{n}_{k}$ on $e_{k}$ and the $(\alpha,\sigma)$-condition, the lower order global variational error associated with interior edges is canceled in a very delicate and transparent way instead of using soft analysis tools (the Bramble--Hilbert lemma etc., cf. \cite{Brandts1994}). The aforementioned basic idea is motivated by Bank and Xu \cite{BaXu2003}. But the technicality here is quite different because of the apparent difference between Lagrange elements and RT elements. Second, we split $\Pi_{h}\bm{p}-\bm{p}_{h}$ into two parts by the discrete Helmholtz decomposition \cref{disHelmholtz}. The norm of one part can be estimated by \cref{superdivintro,supercloseu}. To obtain optimal order global superconvergence, the error associated with another part occurring on triangles near the boundary is treated carefully by the Sobolev and discrete Sobolev inequalities, see \cref{sec:varerr} for details.

The rest of this paper is organized as follows: \cref{sec:preli} contains technical geometric identities and local error expansions. In \cref{sec:varerr}, we estimate the global variational error that forms a basis for the estimate \cref{superup}. The superconvergence result \cref{superup} and related results are presented in \cref{sec:superp}. In \cref{sec:post}, we  develop the superconvergence estimate \cref{superpost} and the related estimate \cref{superpostCR} for CR nonconforming elements. In \cref{sec:numerexp} we present a few numerical examples illustrating the optimality and flexibility of our estimates. 

\section{Preliminaries}\label{sec:preli}
\begin{figure}[tbhp]
\centering
\includegraphics[width=13.0cm,height=5.0cm]{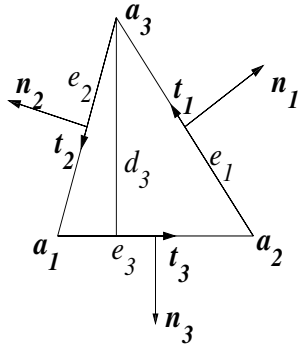}
\caption{a local triangle and associated quantities}
\label{triangle}
\end{figure}
We begin with geometric identities on a local element $\tau$. It has three vertices $\{\bm{a}_{k}\}_{k=1}^{3}$, oriented counterclockwise,
and corresponding barycentric coordinates $\{\lambda_{k}\}_{k=1}^{3}$.
Let $e_{k}$ denote the edge opposite to $\bm{a}_{k}$, $\theta_{k}$ the angle opposite to $e_{k}$, $\ell_{k}$ the length of $e_{k}$, $d_{k}$ the distance from $\bm{a}_{k}$ to $e_{k}$,
$\bm{t}_{k}$ the unit tangent to $e_{k}$, oriented counterclockwise, $\bm{n}_{k}$ the unit outward normal
to $e_{k}$, see \cref{triangle}. Corresponding quantities on $\tau^{\prime}$ and $\tau^{\prime\prime}$ have superscripts $\prime$ and $\prime\prime$ respectively. The subscripts are equivalent mod 3.
From Bank and Xu \cite{BaXu2003}, we have the following identities:
\begin{subequations}
\begin{align}
&\bm{t}_{k}=\frac{\cos\theta_{k+1}}{\sin\theta_{k}}\bm{n}_{k+1}-
\frac{\cos\theta_{k-1}}{\sin\theta_{k}}\bm{n}_{k-1},\label{decomp1}\\
&\bm{n}_{k-1}=-\sin\theta_{k+1}\bm{t}_{k}-\cos\theta_{k+1}\bm{n}_{k},\quad
\bm{t}_{k-1}=-\cos\theta_{k+1}\bm{t}_{k}+\sin\theta_{k+1}\bm{n}_{k},\label{decomp2}\\
&\bm{n}_{k+1}=\sin\theta_{k-1}\bm{t}_{k}-\cos\theta_{k-1}\bm{n}_{k},\quad
\bm{t}_{k+1}=-\cos\theta_{k-1}\bm{t}_{k}-\sin\theta_{k-1}\bm{n}_{k},\label{decomp3}\\
&\sin\theta_{k}\int_{e_{k+1}}v\lambda_{k}\lambda_{k-1}=\sin\theta_{k+1}\int_{e_{k}}v\lambda_{k+1}\lambda_{k-1}
-\int_{\tau}\dftkm{v}(1-\lambda_{k-1})\lambda_{k-1},\label{trans1}\\
&\sin\theta_{k}\int_{e_{k-1}}v\lambda_{k}\lambda_{k+1}=\sin\theta_{k-1}\int_{e_{k}}v\lambda_{k+1}\lambda_{k-1}
+\int_{\tau}\dftkp{v}(1-\lambda_{k+1})\lambda_{k+1},\label{trans2}\\
&\nabla\lambda_{k}=-\bm{n}_{k}/d_{k}.
\end{align}
\end{subequations}

In addition, we have two planar curl operators
\begin{displaymath}
\overrightarrow{\nabla}\times v=\left(\frac{\partial v}{\partial x_{2}},-\frac{\partial v}{\partial x_{1}}\right)^{t},
\quad\nabla\times\bm{q}=\frac{\partial q_{2}}{\partial x_{1}}-\frac{\partial q_{1}}{\partial x_{2}}.
\end{displaymath}
For convenience, we define the matrix
\begin{equation}\label{eq:matrices}
\rot=\begin{bmatrix}0&-1\\1&0\end{bmatrix}.
\end{equation}
It's clear that $\rot$ rotates a vector by degree $\pi/2$ counterclockwise. By direct calculation, we have the following identities:
\begin{subequations}
\begin{align}
&\rot\bm{n}_{k}=\bm{t}_{k},\quad\rot\bm{t}_{k}=-\bm{n}_{k},\label{rottk}\\
&\nabla=\rot\curl,\quad\nabla\times=\divg\rot^{-1},\label{curl:grad}\\
&\curl(vw)=v\overrightarrow{\nabla}\times w+w\overrightarrow{\nabla}\times v,\label{lib}\\
&\nabla\times(v\bm{q})=-(\curl v)\cdot\bm{q}+v\nabla\times\bm{q},\\
&\int_{\tau}v\nabla\times\bm{q}=\sum_{k=1}^{3}\int_{e_{k}}v\bm{q}\cdot\bm{t}_{k}+\int_{\tau}\curl{v}\cdot\bm{q},\label{ipt}\\
&\curl\lambda_{i}=\bm{t}_{i}/d_{i}.
\end{align}
\end{subequations}

Now we introduce basic definitions for RT elements. On the element $\tau$, the degrees of freedom of the lowest order RT elements are defined by
\begin{equation*}
\mathcal{N}_{k}(\bm{q})=\int_{e_{k}}\bm{q}\cdot\bm{n}_{k},\quad1\leq k\leq3.
\end{equation*}
For $\bm{q}\in\mathcal{Q}$, the interpolant $\Pi_{h}\bm{q}$ is the element in $\mathcal{Q}_{h}$ whose restriction to $\tau$ is the unique element in $\mathcal{RT}_{0}(\tau)$ such that
\begin{equation}\label{def:pi}
\mathcal{N}_{k}(\Pi_{h}\bm{q})=\mathcal{N}_{k}(\bm{q}),\quad1\leq k\leq3.
\end{equation}
For $v\in\mathcal{V}$, the interpolant $P_{h}v$ is the $L^{2}(\Omega)$-projection of $v$ onto $\mathcal{V}_{h}$.
$P_{h}$ and $\Pi_{h}$ are connected by the following commuting diagram, which is crucial to the stability and error analysis of mixed methods (cf. \cite{RT1977}).
\begin{equation}\label{com:diag}
\xymatrix{
    \mathcal{Q} \ar[rr]^{\divg}\ar[d]_{\Pi_{h}} & & \mathcal{V} \ar@{->}[d]^{P_{h}} \\
    \mathcal{Q}_{h} \ar[rr]^{\divg}& & \mathcal{V}_{h} \ar@{->}[rr]&&0
    }
\end{equation}
In addition, the following approximation properties hold:
\begin{subequations}\label{approxRT}
\begin{align}
&||\bm{q}-\Pi_{h}\bm{q}||_{0,\Omega}\lesssim h||\nabla_{h}\bm{q}||_{0,\Omega},\label{approxRTa}\\
&||\divg(\bm{q}-\Pi_{h}\bm{q})||_{0,\Omega}\lesssim h||\nabla_{h}\divg\bm{q}||_{0,\Omega},\\
&||v-P_{h}v||_{0,\Omega}\lesssim h||\nabla_{h}v||_{0,\Omega}.
\end{align}
\end{subequations}
where $\nabla_{h}$ is the piecewise gradient.

The following facts can be checked in a straightforward way:
\begin{equation*}
\phi_{k}=\lambda_{k+1}\overrightarrow{\nabla}\times\lambda_{k-1}
-\lambda_{k-1}\overrightarrow{\nabla}\times\lambda_{k+1},\quad1\leq k\leq3,
\end{equation*}
is the dual basis of $\{\mathcal{N}_{k}\}_{k=1}^{3}$. $\{\phi_{k}\}_{k=1}^{3}$ in Cartesian coordinate are
\begin{equation}\label{Cartesian}
\phi_{k}=\frac{\bm{x}-\bm{a}_{k}}{2|\tau|},\quad1\leq k\leq3.
\end{equation}
$\{\phi_{k}\}_{k=1}^{3}$ together with
\begin{equation}\label{psi:j}
\psi_{k}=\lambda_{k+1}\overrightarrow{\nabla}\times\lambda_{k-1}+\lambda_{k-1}\overrightarrow{\nabla}\times\lambda_{k+1},
\quad 1\leq k\leq3,
\end{equation}
form a basis of $\mathcal{P}_{1}(\tau)^{2}$. $\mathcal{N}_{i}$ vanishes at $\psi_{j}$ for $1\leq i,j\leq3$.

It turns out that  the CR interpolation is very useful in the analysis of RT elements. For $\bm{q}\in H^{1}(\tau)^{2}$,
the local CR interpolant $\I_{h}^{CR}\bm{q}$ on $\tau$ is the unique element in $\mathcal{P}_{1}(\tau)^{2}$ such that
\begin{equation}\label{CRinterp}
\int_{e_{k}}\I_{h}^{CR}\bm{q}=\int_{e_{k}}\bm{q},\quad 1\leq k\leq3.
\end{equation}
In addition, $\I_{h}^{CR}$ and $\Pi_{h}$ are connected by the following lemma.
\begin{lemma}\label{interp:cr:rt}
\begin{equation*}
\Pi_{h}\I_{h}^{CR}\bm{q}=\Pi_{h}\bm{q}.
\end{equation*}
\end{lemma}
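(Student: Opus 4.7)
The plan is to verify the identity locally on each triangle $\tau$ by showing that the two interpolants produce the same values under all the degrees of freedom $\mathcal{N}_k$, and then invoke the unisolvence of the RT degrees of freedom (the fact that $\mathcal{N}_1,\mathcal{N}_2,\mathcal{N}_3$ determine a unique element of $\mathcal{RT}_0(\tau)$) to conclude.

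The key observation is that on a straight edge $e_k$ the outward unit normal $\bm{n}_k$ is a constant vector, so it can be pulled outside the edge integral. Using this, for each $k\in\{1,2,3\}$ I would compute
\begin{equation*}
\mathcal{N}_k(\mathcal{I}_h^{CR}\bm{q})=\int_{e_k}(\mathcal{I}_h^{CR}\bm{q})\cdot\bm{n}_k=\bm{n}_k\cdot\int_{e_k}\mathcal{I}_h^{CR}\bm{q}=\bm{n}_k\cdot\int_{e_k}\bm{q}=\int_{e_k}\bm{q}\cdot\bm{n}_k=\mathcal{N}_k(\bm{q}),
\end{equation*}
where the middle equality is just the defining property \cref{CRinterp} of the CR interpolant applied componentwise to the vector field $\bm{q}$. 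Combining this with the definition \cref{def:pi} of $\Pi_h$ gives
\begin{equation*}
\mathcal{N}_k(\Pi_h\mathcal{I}_h^{CR}\bm{q})=\mathcal{N}_k(\mathcal{I}_h^{CR}\bm{q})=\mathcal{N}_k(\bm{q})=\mathcal{N}_k(\Pi_h\bm{q}),\qquad 1\le k\le 3.
\end{equation*}

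Since both $\Pi_h\mathcal{I}_h^{CR}\bm{q}|_\tau$ and $\Pi_h\bm{q}|_\tau$ lie in $\mathcal{RT}_0(\tau)$ and share the same values under the three-dimensional unisolvent set of degrees of freedom $\{\mathcal{N}_k\}_{k=1}^{3}$, they must coincide on $\tau$. Because this holds on every $\tau\in\mathcal{T}_h$, the identity holds globally in $\mathcal{Q}_h$.

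There is no real obstacle here; the only subtlety to flag is the need for $\bm{q}$ to be regular enough (e.g. $\bm{q}\in H^{1}(\tau)^{2}$) so that the edge integrals defining both $\mathcal{I}_h^{CR}\bm{q}$ and $\Pi_h\bm{q}$ are well defined, which is exactly the hypothesis already attached to the CR interpolant in \cref{CRinterp}.
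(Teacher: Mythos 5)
Your proof is correct and follows essentially the same route as the paper: both show that $\mathcal{N}_{k}(\Pi_{h}\I_{h}^{CR}\bm{q}-\Pi_{h}\bm{q})=\int_{e_{k}}(\I_{h}^{CR}\bm{q}-\bm{q})\cdot\bm{n}_{k}=0$ using the constancy of $\bm{n}_{k}$ on each edge together with \cref{CRinterp}, and then conclude by the unisolvence of the RT degrees of freedom. Your write-up merely spells out the intermediate steps that the paper leaves implicit.
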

\begin{proof}
It follows from \cref{def:pi,CRinterp} that
\begin{equation*}
\mathcal{N}_{k}(\Pi_{h}\I_{h}^{CR}\bm{q}-\Pi_{h}\bm{q})
=\int_{e_{k}}(\I_{h}^{CR}\bm{q}-\bm{q})\cdot\bm{n}_{k}=0.
\end{equation*}
\cref{interp:cr:rt} is then from the unisolvence of RT elements.
\end{proof}

Now, we expand the interpolation error for linear functions.
\begin{lemma}\label{er:exp}
For $\bm{p}_{L}\in\mathcal{P}_{1}(\tau)^{2}$,
\begin{equation*}
\bm{p}_{L}-\Pi_{h}\bm{p}_{L}=\overrightarrow{\nabla}\times r,
\end{equation*}
where
\begin{equation*}
r=-\sum_{k=1}^{3}\frac{\ell_{k}^{2}}{2}\bm{n}_{k}\cdot\dftk{\bm{p}_{L}}\lambda_{k-1}\lambda_{k+1}.
\end{equation*}
\end{lemma}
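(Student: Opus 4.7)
The plan is to expand $\bm{p}_L-\Pi_h\bm{p}_L$ in the basis $\{\phi_k,\psi_k\}_{k=1}^3$ of $\mathcal{P}_1(\tau)^2$ and then recognize the result as the scalar curl of an explicit $r$.

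First I would note that $\phi_j\in\mathcal{RT}_0(\tau)$ by \cref{Cartesian} together with $\mathcal{N}_i(\phi_j)=\delta_{ij}$ force $\Pi_h\phi_j=\phi_j$, while $\mathcal{N}_i(\psi_j)=0$ forces $\Pi_h\psi_j=0$ by the unisolvence of $\mathcal{RT}_0$. Hence, writing $\bm{p}_L=\sum_k a_k\phi_k+\sum_k c_k\psi_k$ in the basis, the interpolation error collapses to $\bm{p}_L-\Pi_h\bm{p}_L=\sum_{k=1}^{3}c_k\psi_k$. Applying the Leibniz rule \cref{lib} to the definition \cref{psi:j} of $\psi_k$ gives $\psi_k=\overrightarrow{\nabla}\times(\lambda_{k+1}\lambda_{k-1})$, so it remains only to identify the coefficients $c_k$.

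To pin down $c_k$, I would equate normal components on each edge $e_k$. Since $\Pi_h\bm{p}_L\in\mathcal{RT}_0(\tau)$, the trace $\Pi_h\bm{p}_L\cdot\bm{n}_k$ is constant on $e_k$, whereas $\bm{p}_L\cdot\bm{n}_k$ is linear along $e_k$ with constant tangential slope $\bm{n}_k\cdot\dftk{\bm{p}_L}$. Subtracting the mean and using the parametrization of $e_k$ in terms of the barycentric coordinates $\lambda_{k-1},\lambda_{k+1}$ gives
\[
(\bm{p}_L-\Pi_h\bm{p}_L)\cdot\bm{n}_k\big|_{e_k}=\frac{\ell_k}{2}\,\bm{n}_k\cdot\dftk{\bm{p}_L}\,(\lambda_{k-1}-\lambda_{k+1})\big|_{e_k}.
\]
On the other side, from \cref{rottk,curl:grad} one gets the identity $(\overrightarrow{\nabla}\times w)\cdot\bm{n}_k=\dftk{w}$, so $\psi_j\cdot\bm{n}_k|_{e_k}=\dftk{(\lambda_{j+1}\lambda_{j-1})}|_{e_k}$. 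For $j\neq k$, the product contains $\lambda_k$ as a factor, and since $\nabla\lambda_k=-\bm{n}_k/d_k$ is orthogonal to $\bm{t}_k$, the tangential derivative vanishes on $e_k$. For $j=k$, using $\lambda_{k+1}+\lambda_{k-1}=1$ on $e_k$, a short calculation gives $\psi_k\cdot\bm{n}_k|_{e_k}=(\lambda_{k+1}-\lambda_{k-1})/\ell_k$. Matching the two sides of the edge identity yields $c_k=-(\ell_k^2/2)\,\bm{n}_k\cdot\dftk{\bm{p}_L}$, which is exactly the coefficient appearing in the stated formula for $r$.

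There is no deep obstacle here; the main care will be with signs and orientations, in particular the convention that $\bm{t}_k$ is the counterclockwise tangent pointing from $\bm{a}_{k+1}$ to $\bm{a}_{k-1}$ on $e_k$, and the sign in $(\overrightarrow{\nabla}\times w)\cdot\bm{n}_k=\dftk{w}$ derived from \cref{rottk,curl:grad}. Once these conventions are fixed, the whole argument is a few lines of direct computation, and the conclusion $\bm{p}_L-\Pi_h\bm{p}_L=\overrightarrow{\nabla}\times r$ with $r$ as stated follows immediately.
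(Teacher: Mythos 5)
Your proposal is correct and follows essentially the same route as the paper: both expand $\bm{p}_{L}-\Pi_{h}\bm{p}_{L}$ in the $\{\psi_{k}\}$ basis (using $\mathcal{N}_{i}(\psi_{j})=0$), identify $\sum_k c_k\psi_{k}$ as $\overrightarrow{\nabla}\times\bigl(\sum_k c_k\lambda_{k+1}\lambda_{k-1}\bigr)$ via \cref{lib}, and then extract the coefficients from the structure of $\mathcal{RT}_{0}(\tau)$. The only (harmless) difference is in the last step: you match the linear normal traces on each edge $e_{k}$, using that $\Pi_{h}\bm{p}_{L}\cdot\bm{n}_{k}$ is the constant edge-average, whereas the paper takes the second tangential derivative $\partial^{2}r/\partial\bm{t}_{k}^{2}$ and uses $\bm{n}_{k}\cdot\partial(\Pi_{h}\bm{p}_{L})/\partial\bm{t}_{k}=0$; both computations yield $c_{k}=-\tfrac{\ell_{k}^{2}}{2}\bm{n}_{k}\cdot\dftk{\bm{p}_{L}}$.
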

\begin{proof}
First, it follows from $\mathcal{N}_{i}(\psi_{k})=0$ and $\mathcal{N}_{i}(\bm{p}_{L}-\Pi_{h}\bm{p}_{L})=0$ that
\begin{equation*}
\bm{p}_{L}-\Pi_{h}\bm{p}_{L}=\sum_{k=1}^{3}\alpha_{k}\psi_{k}.
\end{equation*}
Then by \cref{psi:j,curl:grad,lib}, we arrive at
\begin{equation}\label{pl}
\rot(\bm{p}_{L}-\Pi_{h}\bm{p}_{L})=
\nabla\left(\sum_{k=1}^{3}\alpha_{k}\lambda_{k-1}\lambda_{k+1}\right)=\nabla r.
\end{equation}
It remains to verify that
\begin{equation}\label{goal}
\alpha_{k}=-\frac{\ell_{k}^{2}}{2}\bm{n}_{k}\cdot\dftk{\bm{p}_{L}}.
\end{equation}
Taking inner products with $\bm{t}_{k}$  and then taking the directional derivative along $\bm{t}_{k}$ on both sides of  \cref{pl} leads to
\begin{equation}\label{cond:0}
\bm{n}_{k}\cdot\left(\dftk{\bm{p}_{L}}-\dftk{\Pi_{h}\bm{p}_{L}}\right)=\frac{\partial^{2}r}{\partial\bm{t}_{k}^{2}}.
\end{equation}
The definition of $\mathcal{RT}_{0}(\tau)$ \cref{formRT} implies that $\partial\Pi_{h}\bm{p}_{L}/\partial\bm{t}_{k}$
is parallel to $\bm{t}_{k}$ and therefore
\begin{equation}\label{cond:1}
\bm{n}_{k}\cdot\dftk{\Pi_{h}\bm{p}_{L}}=0.
\end{equation}
For the right hand side,
\begin{equation}\label{cond:2}
\frac{\df^{2}r}{\df\bm{t}_{k}^{2}}=2\alpha_{k}\dftk{\lambda_{k-1}}\dftk{\lambda_{k+1}}=-\frac{2\alpha_{k}}{\ell_{k}^{2}}.
\end{equation}
Combing \cref{cond:0}, \cref{cond:1} and \cref{cond:2}, we obtain \cref{goal}.
\end{proof}

\section{Variational error expansions}\label{sec:varerr}
The following is our main technical lemma for estimating the global variational error of mixed methods.
\begin{lemma}\label{err1}
For $\bm{q}_{h}\in\mathcal{P}_{0}(\tau)$,
\begin{equation}\label{locallemmaeq}
\int_{\tau}(\bm{p}_{L}-\Pi_{h}\bm{p}_{L})\cdot\bm{q}_{h}=\sum_{k=1}^{3}
\cot\theta_{k}\int_{e_{k}}\lambda_{k-1}\lambda_{k+1}\left(\sum_{j=1}^{3}
\alpha_{k}^{(j)}\mathcal{A}_{k}^{(j)}\bm{p}_{L}\right)\bm{q}_{h}\cdot\bm{n}_{k},
\end{equation}
where
\begin{equation}\label{alpha}
\alpha_{k}^{(1)}=|\tau|,\quad\alpha_{k}^{(2)}=-|\tau|,\quad\alpha_{k}^{(3)}
=\frac{1}{2}(\ell_{k-1}^{2}-\ell_{k+1}^{2}),
\end{equation}
and $\mathcal{A}_{k}^{(j)}$ are operators defined by
\begin{equation}\label{operatorA}
\mathcal{A}_{k}^{(1)}=\bm{t}_{k}\cdot\dftk{},\quad\mathcal{A}_{k}^{(2)}=\bm{n}_{k}\cdot\dfnk{},
\quad\mathcal{A}_{k}^{(3)}=\bm{n}_{k}\cdot\dftk{}.
\end{equation}
\end{lemma}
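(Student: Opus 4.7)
The plan is to reduce \cref{locallemmaeq} to an algebraic identity by starting from the curl representation furnished by \cref{er:exp}. First I would write $\bm{p}_L-\Pi_h\bm{p}_L=\overrightarrow{\nabla}\times r$ with the explicit $r$ from that lemma and, using the fact that $\bm{q}_h$ is a constant vector on $\tau$, apply the divergence theorem componentwise (together with $\rot\bm{n}_k=\bm{t}_k$ from \cref{rottk}) to obtain
\begin{equation*}
\int_\tau(\overrightarrow{\nabla}\times r)\cdot\bm{q}_h=\bm{q}_h\cdot\int_\tau\overrightarrow{\nabla}\times r=-\sum_{k=1}^{3}(\bm{q}_h\cdot\bm{t}_k)\int_{e_k}r.
\end{equation*}
Because $\lambda_k$ vanishes on $e_k$, only the $k$-th summand in the definition of $r$ survives there; combined with $\int_{e_k}\lambda_{k-1}\lambda_{k+1}=\ell_k/6$, this yields the intermediate form
\begin{equation*}
\int_\tau(\bm{p}_L-\Pi_h\bm{p}_L)\cdot\bm{q}_h=\sum_{k=1}^{3}\frac{\ell_k^3}{12}\bigl(\bm{n}_k\cdot\dftk{\bm{p}_L}\bigr)(\bm{q}_h\cdot\bm{t}_k).
\end{equation*}

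The main work is then to convert this sum, whose weights pair $\bm{q}_h$ with the tangential direction $\bm{t}_k$, into the target form, whose weights pair $\bm{q}_h$ with the edge normal $\bm{n}_k$ through the three operators $\mathcal{A}_k^{(j)}$. I would apply \cref{decomp1} to expand each $\bm{t}_k$ as a combination of $\bm{n}_{k-1}$ and $\bm{n}_{k+1}$, then re-index so that, for every $j$, the coefficient of $\bm{q}_h\cdot\bm{n}_j$ aggregates the contributions originally attached to the neighboring edges $e_{j\pm 1}$. Then \cref{decomp2,decomp3} and the chain rule allow me to rewrite each $\bm{n}_{j\pm 1}\cdot\frac{\partial\bm{p}_L}{\partial\bm{t}_{j\pm 1}}$ in the local frame $\{\bm{t}_j,\bm{n}_j\}$ on $e_j$. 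Finally, the law of sines $\ell_k/\sin\theta_k=2R$ together with the area formula $|\tau|=\frac{1}{2}\ell_{j-1}\ell_{j+1}\sin\theta_j$ puts every coefficient into the correct shape.

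Three trigonometric reductions rooted in the angle-sum relation $\theta_{j-1}+\theta_{j+1}=\pi-\theta_j$ will complete the proof. The addition formula $\sin\theta_{j-1}\cos\theta_{j+1}+\cos\theta_{j-1}\sin\theta_{j+1}=\sin\theta_j$ collapses the coefficients of $\bm{t}_j\cdot\frac{\partial\bm{p}_L}{\partial\bm{t}_j}$ and $\bm{n}_j\cdot\frac{\partial\bm{p}_L}{\partial\bm{n}_j}$ into $|\tau|$ and $-|\tau|$, matching $\alpha_j^{(1)}$ and $\alpha_j^{(2)}$; the cross term $\bm{t}_j\cdot\frac{\partial\bm{p}_L}{\partial\bm{n}_j}$ cancels identically; and the coefficient of $\bm{n}_j\cdot\frac{\partial\bm{p}_L}{\partial\bm{t}_j}$ reduces via $\sin^2\theta_{j-1}-\sin^2\theta_{j+1}=\sin\theta_j\sin(\theta_{j-1}-\theta_{j+1})$ to $(\ell_{j-1}^2-\ell_{j+1}^2)/2=\alpha_j^{(3)}$. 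The main obstacle is precisely this bookkeeping, in which the cyclic indices $k,k\pm 1$ interact with \cref{decomp1,decomp2,decomp3} simultaneously; a useful check is that both sides must vanish on $\mathcal{RT}_0(\tau)$, so the identity needs only to be verified on the three-dimensional complement spanned by $\{\psi_k\}$ from \cref{psi:j}.
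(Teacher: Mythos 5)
Your proposal is correct and follows essentially the same route as the paper: integrate by parts against the curl representation of \cref{er:exp} to get $-\sum_k(\bm{q}_h\cdot\bm{t}_k)\int_{e_k}r$, expand $\bm{t}_k$ via \cref{decomp1}, re-index to the normal components, and finish with \cref{decomp2,decomp3} and the same trigonometric identities. The only cosmetic difference is that you evaluate $\int_{e_k}\lambda_{k-1}\lambda_{k+1}=\ell_k/6$ and invoke the law of sines directly, whereas the paper reaches the same re-indexed sum through \cref{trans1,trans2}, whose volume correction terms vanish here because the integrands are constant.
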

\begin{proof}
Using \cref{ipt,er:exp}, we have
\begin{equation*}
\int_{\tau}(\bm{p}_{L}-\Pi_{h}\bm{p}_{L})\cdot\bm{q}_{h}
=-\sum_{k=1}^{3}\int_{e_{k}}r\bm{q}_{h}\cdot\bm{t}_{k}.
\end{equation*}
Therefore, it follows from \cref{decomp1,trans1,trans2} that
\begin{equation*}
\begin{aligned}
&\quad\int_{\tau}(\bm{p}_{L}-\Pi_{h}\bm{p}_{L})\cdot\bm{q}_{h}\\
&=\sum_{k=1}^{3}\int_{e_{k}}\frac{\ell_{k}^{2}}{2}\bm{n}_{k}\cdot\dftk{\bm{p}_{L}}
\lambda_{k-1}\lambda_{k+1}\bm{q}_{h}\cdot\bm{t}_{k}\\
&=\sum_{k=1}^{3}\int_{e_{k}}\frac{\ell_{k}^{2}}{2}\bm{n}_{k}\cdot\dftk{\bm{p}_{L}}
\lambda_{k-1}\lambda_{k+1}\left(\frac{\cos\theta_{k+1}}{\sin\theta_{k}}\bm{q}_{h}\cdot\bm{n}_{k+1}
-\frac{\cos\theta_{k-1}}{\sin\theta_{k}}\bm{q}_{h}\cdot\bm{n}_{k-1}\right)\\
&=\sum_{k=1}^{3}\left\{\int_{e_{k-1}}\frac{\ell_{k-1}^{2}}{2}\bm{n}_{k-1}\cdot\dftkm{\bm{p}_{L}}
\lambda_{k}\lambda_{k+1}\frac{\cos\theta_{k}}{\sin\theta_{k-1}}\right.\\
&\quad\quad\quad\quad\quad\quad\quad\left.-\int_{e_{k+1}}\frac{\ell_{k+1}^{2}}{2}\bm{n}_{k+1}\cdot\dftkp{\bm{p}_{L}}
\lambda_{k}\lambda_{k-1}\frac{\cos\theta_{k}}{\sin\theta_{k+1}}\right\}\bm{q}_{h}\cdot\bm{n}_{k}\\
&=\sum_{k=1}^{3}\cot\theta_{k}\int_{e_{k}}\lambda_{k+1}\lambda_{k-1}
\left(\frac{\ell_{k-1}^{2}}{2}\bm{n}_{k-1}\cdot\dftkm{\bm{p}_{L}}-
\frac{\ell_{k+1}^{2}}{2}\bm{n}_{k+1}\cdot\dftkp{\bm{p}_{L}}\right)\bm{q}_{h}\cdot\bm{n}_{k}.
\end{aligned}
\end{equation*}
Then by \cref{decomp2,decomp3} and following identities:
\begin{equation*}
\begin{aligned}
&\ell_{k-1}\sin\theta_{k+1}=\ell_{k+1}\sin\theta_{k-1}=d_{k},\\
&\ell_{k-1}^{2}\cos^{2}\theta_{k+1}-\ell_{k+1}^{2}\cos^{2}\theta_{k-1}=\ell_{k-1}^{2}-\ell_{k+1}^{2},\\
&\ell_{k+1}\cos\theta_{k-1}+\ell_{k-1}\cos\theta_{k+1}=\ell_{k},
\end{aligned}
\end{equation*}
and direct calculation, we obtain \cref{locallemmaeq}.
\end{proof}

Now we state definitions of $\mathcal{O}(h^{1+\alpha})$ approximate parallelograms and mildly 
structured grids in \cite{BaXu2003} below with a little generalization.
\begin{definition}
Let e be an edge in the triangulation $\mathcal{T}_{h}$. Let $\tau$ and $\tau^{\prime}$ be the two adjacent
elements sharing e. We say that $\tau$ and $\tau^{\prime}$ form an $\mathcal{O}(h^{1+\alpha})$ approximate parallelogram
if the lengths of any two opposite edges differ only by $\mathcal{O}(h^{1+\alpha})$.
\end{definition}
The boundary elements need more delicate treatment.
\begin{definition}\label{approxpara}
Let x be a vertex in $\mathcal{T}_{h}$ on $\partial\Omega$. Let e and $e^{\prime}$ be the two boundary edges
sharing x as an endpoint, and let $\bm{t}$ and $\bm{t}^{\prime}$ be the unit tangents, oriented counterclockwise.
Let $\tau$ and $\tau^{\prime}$ be the two adjacent elements having e and $e^{\prime}$ as edges respectively.
Number e and $e^{\prime}$ as a pair of corresponding edges. By going along the boundaries of $\tau$ and $\tau^{\prime}$
counterclockwise, we have other two pairs of corresponding edges. We say that $\tau$ and $\tau^{\prime}$ form an $\mathcal{O}(h^{1+\alpha})$ approximate parallelogram if the lengths of any two corresponding edges differ only by $\mathcal{O}(h^{1+\alpha})$,
and $|\bm{t}-\bm{t}^{\prime}|=\mathcal{O}(h^{\alpha})$.
\end{definition}
\begin{remark}
$\tau$ and $\tau^{\prime}$ in \cref{approxpara} don't form an approximate parallelogram in the \emph{usual} sense,
since they have no common edge.
\end{remark}
\begin{definition}\label{alphasigma}
The triangulation $\mathcal{T}_{h}$ satisfies the $(\alpha,\sigma)$-condition if the following hold:
\begin{enumerate}
\item Let $\mathcal{E}=\mathcal{E}_{1}\biguplus\mathcal{E}_{2}$ be the set of interior edges. For each $e\in\mathcal{E}_{1}$,
$\tau$ and $\tau^{\prime}$ form an $\mathcal{O}(h^{1+\alpha})$ approximate parallelogram, while
$\sum_{e\in\mathcal{E}_{2}}|\tau|+|\tau^{\prime}|=\mathcal{O}(h^{\sigma}).$
\item Let $\mathcal{P}=\mathcal{P}_{1}\biguplus\mathcal{P}_{2}$ be the set of boundary vertices. The adjacent boundary elements $\tau, \tau^{\prime}$ in \cref{approxpara} associated with each $x\in\mathcal{P}_{1}$ form an $\mathcal{O}(h^{1+\alpha})$ approximate parallelogram, and $|\mathcal{P}_{2}|=\kappa$ is a finite number independent of $h$.
\label{alphasigma:P}
\end{enumerate}
\end{definition}
For example, we have $\alpha=\sigma=\infty, \mathcal{E}_{2}=\emptyset, \kappa=4$ for the uniform grid in \cref{grida}.
\begin{definition}\label{defpw}
Let $\Omega$ be decomposed into $N$ subdomains, where $N$ is independent of $h$.  $\mathcal{T}_{h}$ is said to satisfy the piecewise
$(\alpha,\sigma)$-condition if the restriction of $\mathcal{T}_{h}$ to each subdomain satisfies the $(\alpha,\sigma)$-condition.
\end{definition}

With the above definitions, we can present the main lemma.
\begin{lemma}\label{mainlemma}
Let $\mathcal{T}_{h}$ be quasi-uniform and satisfy the $(\alpha,\sigma)$-condition.
Let $\bm{q}_{h}\in\curlvect\mathcal{S}_{h}$, where $\mathcal{S}_{h}$ consists of continuous piecewise linear polynomials
on $\mathcal{T}_{h}$. Then
\begin{equation}\label{mainlemmaeq}
|(\bm{p}-\Pi_{h}\bm{p},\bm{q}_{h})|
\lesssim h^{1+\rho}|\log h|^{\frac{1}{2}}||\nabla\bm{p}||_{1,\infty,\Omega}||\bm{q}_{h}||_{0,\Omega},
\end{equation}
where
\begin{equation*}
\rho=\min(1,\alpha,\frac{\sigma}{2}).
\end{equation*}
\end{lemma}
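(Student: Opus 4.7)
The plan is to reduce the global estimate to a sum of local edge contributions via \cref{err1}, then exploit the approximate-parallelogram structure to cancel contributions between adjacent triangles. As a first step, on each $\tau$ I would let $\bm{p}_{L}^{\tau}$ denote the linear Taylor polynomial of $\bm{p}$ about the centroid of $\tau$ and split
\begin{equation*}
(\bm{p}-\Pi_{h}\bm{p},\bm{q}_{h})=\sum_{\tau}(\bm{p}_{L}^{\tau}-\Pi_{h}\bm{p}_{L}^{\tau},\bm{q}_{h})_{\tau}+\sum_{\tau}\bigl((I-\Pi_{h})(\bm{p}-\bm{p}_{L}^{\tau}),\bm{q}_{h}\bigr)_{\tau}.
\end{equation*}
By Taylor's theorem and \cref{approxRTa}, the second sum is $\mathcal{O}(h^{2}\|\nabla\bm{p}\|_{1,\infty,\Omega}\|\bm{q}_{h}\|_{0,\Omega})$, already at the target rate, so the main task is to bound the first sum using \cref{err1}.

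Applying \cref{err1} elementwise and regrouping by edges yields a sum over the mesh edges. Since $s_{h}$ is continuous and piecewise linear, $\bm{q}_{h}=\curlvect s_{h}$ is piecewise constant with $\bm{q}_{h}\cdot\bm{n}_{e}=-\dfte{s_{h}}$ constant on each edge and continuous across every interior edge. For an interior edge $e=\tau\cap\tau'$, I would pair the two elementwise contributions into
\begin{equation*}
\int_{e}\lambda_{k-1}\lambda_{k+1}\,R_{e}\,(\bm{q}_{h}\cdot\bm{n}_{e}),\quad R_{e}:=\cot\theta_{k}\sum_{j=1}^{3}\alpha_{k}^{(j)}\mathcal{A}_{k}^{(j)}\bm{p}_{L}^{\tau}-\cot\theta_{k'}'\sum_{j=1}^{3}\alpha_{k'}^{(j)\prime}\mathcal{A}_{k'}^{(j)}\bm{p}_{L}^{\tau'}.
\end{equation*}
A direct computation using $\bm{t}_{k'}'=-\bm{t}_{k}$, $\bm{n}_{k'}'=-\bm{n}_{k}$ together with \cref{decomp1,decomp2,decomp3} shows that on a true parallelogram one has $\cot\theta_{k}=\cot\theta_{k'}'$, $\alpha_{k}^{(j)}=\alpha_{k'}^{(j)\prime}$, and $\mathcal{A}_{k}^{(j)}\bm{p}_{L}=\mathcal{A}_{k'}^{(j)}\bm{p}_{L}$ on any common linear polynomial, so $R_{e}=0$. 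For $e\in\mathcal{E}_{1}$ the approximate-parallelogram condition perturbs each identity by a relative factor $\mathcal{O}(h^{\alpha})$, and replacing $\bm{p}_{L}^{\tau'}$ by $\bm{p}_{L}^{\tau}$ in the $\mathcal{A}$-values costs an extra $\mathcal{O}(h\|\bm{p}\|_{2,\infty})$; combined with $|\alpha_{k}^{(j)}|\lesssim h^{2}$ this yields $|R_{e}|\lesssim h^{2+\min(1,\alpha)}\|\nabla\bm{p}\|_{1,\infty,\omega_{e}}$. A trace inequality $\|\bm{q}_{h}\cdot\bm{n}_{e}\|_{L^{2}(e)}\lesssim h^{-1/2}\|\bm{q}_{h}\|_{0,\omega_{e}}$ followed by Cauchy--Schwarz summation over $\mathcal{E}_{1}$ then produces a contribution of order $h^{1+\min(1,\alpha)}\|\nabla\bm{p}\|_{1,\infty,\Omega}\|\bm{q}_{h}\|_{0,\Omega}$. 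For $e\in\mathcal{E}_{2}$ I would abandon cancellation, bound $R_{e}$ directly by $\mathcal{O}(h^{2})$, and use $\sum_{e\in\mathcal{E}_{2}}|\omega_{e}|\lesssim h^{\sigma}$ with Cauchy--Schwarz to obtain a contribution of order $h^{1+\sigma/2}$.

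The main obstacle is the boundary, since a boundary edge has no neighbor across which to cancel. I would pair the two boundary edges meeting at each $x\in\mathcal{P}_{1}$ exactly as in \cref{approxpara}; the conditions on the corresponding edge lengths and on $|\bm{t}-\bm{t}'|=\mathcal{O}(h^{\alpha})$ are designed precisely so that the interior-edge calculation can be repeated to yield the same $h^{1+\min(1,\alpha)}$ bound. The exceptional set $\mathcal{P}_{2}$ of size $\kappa$ contributes only finitely many unpaired boundary edges; each is estimated by writing $\bm{q}_{h}\cdot\bm{n}_{e}=-\dfte{s_{h}}$ and invoking the discrete Sobolev inequality $\|s_{h}\|_{L^{\infty}(\Omega)}\lesssim|\log h|^{1/2}\|\nabla s_{h}\|_{0,\Omega}=|\log h|^{1/2}\|\bm{q}_{h}\|_{0,\Omega}$ available on quasi-uniform meshes (after fixing the gauge of $s_{h}$ to have mean zero). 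This step is the origin of the $|\log h|^{1/2}$ factor in the final estimate. Collecting the four classes of contributions and setting $\rho=\min(1,\alpha,\sigma/2)$ yields \cref{mainlemmaeq}.
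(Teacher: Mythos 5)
Your treatment of the interior edges follows the paper's route (local expansion from \cref{err1}, edge-wise regrouping using the single-valuedness of $\bm{q}_{h}\cdot\bm{n}_{e}$, cancellation of the coefficients $\alpha_{e}^{(j)}\cot\theta_{e}$ on $\mathcal{E}_{1}$, and a measure argument on $\mathcal{E}_{2}$), and substituting the elementwise Taylor polynomial for the paper's $\I_{h}^{CR}\bm{p}$ is a harmless variation. The genuine gap is at the boundary. For an interior edge the two elementwise contributions combine into a \emph{single} integral over $e$ because they share the common factor $\bm{q}_{h}\cdot\bm{n}_{e}$ on the \emph{same} edge, so the coefficients subtract pointwise. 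For a vertex $x\in\mathcal{P}_{1}$ the two paired objects are integrals over two \emph{different} edges $e$ and $e^{\prime}$, carrying the unrelated constants $\bm{q}_{h}\cdot\bm{n}_{e}$ and $\bm{q}_{h}\cdot\bm{n}_{e^{\prime}}$; there is no shared integral, and the ``interior-edge calculation'' cannot simply be repeated. Estimating each boundary edge separately only yields $\mathcal{O}(h^{3/2})\,\|\bm{q}_{h}\|_{0,\Omega}$ after Cauchy--Schwarz over the $\mathcal{O}(h^{-1})$ boundary edges, which is weaker than $h^{1+\rho}$ whenever $\rho>1/2$.

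The missing idea is a summation by parts along $\partial\Omega$: writing $\bm{q}_{h}\cdot\bm{n}_{e}=\dfte{w_{h}}$ (constant on $e$) and using $\int_{e}\lambda_{k-1}\lambda_{k+1}=\ell_{e}/6$, each boundary-edge integral with edge-averaged coefficient $\bar{B}_{e}^{(j)}$ collapses to $\tfrac{1}{6}\bar{B}_{e}^{(j)}\bigl(w_{h}(x_{2})-w_{h}(x_{1})\bigr)$, and Abel summation around the boundary loop turns the edge sum into $\sum_{x\in\mathcal{P}}\tfrac{1}{6}\bigl(\bar{B}_{e}^{(j)}-\bar{B}_{e^{\prime}}^{(j)}\bigr)w_{h}(x)$. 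Only now does the pairing of \cref{approxpara} act: it gives $|\bar{B}_{e}^{(j)}-\bar{B}_{e^{\prime}}^{(j)}|\lesssim h^{2+\alpha}$ at the $\mathcal{O}(h^{-1})$ vertices of $\mathcal{P}_{1}$ and $\mathcal{O}(h^{2})$ at the $\kappa$ vertices of $\mathcal{P}_{2}$. The price of this maneuver is that $w_{h}(x)$ must be controlled pointwise at \emph{every} boundary vertex, so the discrete Sobolev inequality $\|w_{h}\|_{0,\infty,\Omega}\lesssim|\log h|^{1/2}\|w_{h}\|_{1,\Omega}$ enters through the $\mathcal{P}_{1}$ sum as well, not only through the exceptional set $\mathcal{P}_{2}$ as you assert; indeed, the isolated $\mathcal{P}_{2}$ edges could be handled without any logarithm if estimated directly. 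You should also record the $\mathcal{O}(h^{2})$ term coming from replacing $B_{e}^{(j)}$ by its edge average $\bar{B}_{e}^{(j)}$ before summing by parts (this is automatic in your Taylor-polynomial formulation but must be accounted for once you pass back to $\bm{p}$).
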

\begin{proof}
By \cref{interp:cr:rt,err1} and passing through $\I_{h}^{CR}\bm{p}$, we have
\begin{equation}\label{maintotal}
\begin{aligned}
&\quad(\bm{p}-\Pi_{h}\bm{p},\bm{q}_{h})\\
&=(\bm{p}-\I_{h}^{CR}\bm{p},\bm{q}_{h})
+\sum_{\tau\in\mathcal{T}_{h}}\int_{\tau}(\I_{h}^{CR}\bm{p}-\Pi_{h}\I_{h}^{CR}\bm{p})\cdot\bm{q}_{h}\\
&=(\bm{p}-\I_{h}^{CR}\bm{p},\bm{q}_{h})\\
&+\sum_{\tau\in\mathcal{T}_{h}}\sum_{k=1}^{3}
\cot\theta_{k}\int_{e_{k}}\lambda_{k-1}\lambda_{k+1}\left(\sum_{j=1}^{3}
\alpha_{k}^{(j)}\mathcal{A}_{k}^{(j)}(\I_{h}^{CR}\bm{p}-\bm{p})\right)\bm{q}_{h}\cdot\bm{n}_{k}\\
&+\sum_{\tau\in\mathcal{T}_{h}}\sum_{k=1}^{3}
\cot\theta_{k}\int_{e_{k}}\lambda_{k-1}\lambda_{k+1}\left(\sum_{j=1}^{3}
\alpha_{k}^{(j)}\mathcal{A}_{k}^{(j)}\bm{p}\right)\bm{q}_{h}\cdot\bm{n}_{k}\\
&=\Rom{1}+\Rom{2}+\Rom{3}.\\
\end{aligned}
\end{equation}
Parts $\Rom{1}$ and $\Rom{2}$ can be simply estimated by the standard finite element interpolation theory:
\begin{equation}\label{bdI}
|\Rom{1}|\lesssim h^{2}|\bm{p}|_{2,\Omega}||\bm{q}_{h}||_{0,\Omega},
\end{equation}
and
\begin{equation}\label{bdII}
\begin{aligned}
|\Rom{2}|&\lesssim\sum_{\tau\in\mathcal{T}_{h}}h\int_{\tau}|\nabla(\I_{h}^{CR}\bm{p}-\bm{p})|\cdot|\bm{q}_{h}|+h^{2}\int_{\tau}|\nabla^{2}\bm{p}|\cdot|\bm{q}_{h}|\\
&\lesssim h^{2}|\bm{p}|_{2,\Omega}||\bm{q}_{h}||_{0,\Omega}.
\end{aligned}
\end{equation}
The main task is to bound part $\Rom{3}$. For $e\subset\partial\Omega$, let $\tau$ be the element having $e$ as an edge. For 
$e\in\mathcal{E}$, let $\tau$ and $\tau^{\prime}$ be the two elements sharing $e$.
Let $\bm{t}_{e}$ and $\bm{n}_{e}$ denote the unit tangent and normal to $e$ whose directions are consistent with $\tau$.
Let $\mathcal{A}_{e}^{(j)}$ denote the operators in \cref{operatorA}
corresponding to $\bm{t}_{e}$ and $\bm{n}_{e}$, $\theta_{e}$ the angle opposite to $e$ in $\tau$, $\ell_{e}$ the length of $e$,
$\alpha_{e}^{(j)}$ the quantity associated with $e$ on $\tau$ in \cref{alpha}.
Corresponding quantities on $\tau^{\prime}$ have a superscript $\prime$.
Denote
\begin{equation*}
b_{e}=\lambda_{k-1}\lambda_{k+1}, \quad
\beta_{e}^{(j)}=\alpha_{e}^{(j)}\cot\theta_{e}-\alpha_{e}^{(j)\prime}\cot\theta_{e}^{\prime}.
\end{equation*}
$\bm{q}_{h}\in\mathcal{Q}_{h}$  implies that 
$\bm{q}_{h}|_{\tau}\cdot\bm{n}_{e}=\bm{q}_{h}|_{\tau^{\prime}}\cdot\bm{n}_{e}$ on $e$. Thus we can transform $\Rom{3}$
from element-wise summation to edge-wise summation:
\begin{equation*}
\Rom{3}=\Rom{3}_{1}+\Rom{3}_{2}+\Rom{3}_{3},
\end{equation*}
where
\begin{equation*}
\begin{aligned}
&\Rom{3}_{i}=\sum_{e\in\mathcal{E}_{i}}\int_{e}b_{e}\left(\sum_{j=1}^{3}
\beta_{e}^{(j)}\mathcal{A}_{e}^{(j)}\bm{p}\right)\bm{q}_{h}\cdot\bm{n}_{e},\quad i=1,2,\\
&\Rom{3}_{3}=\sum_{e\subset\partial\Omega}\cot\theta_{e}\int_{e}b_{e}\left(\sum_{j=1}^{3}
\alpha_{e}^{(j)}\mathcal{A}_{e}^{(j)}\bm{p}\right)\bm{q}_{h}\cdot\bm{n}_{e}.
\end{aligned}
\end{equation*}
For $e\in\mathcal{E}_{1}$, the fact that $\tau$ and $\tau^{\prime}$ form an $\mathcal{O}(h^{1+\alpha})$ 
approximate parallelogram implies $$|\beta_{e}^{(j)}|\lesssim h^{2+\alpha}.$$
Combining the above inequality
with the trace inequality
\begin{equation}\label{trace}
\int_{\partial\tau}|f|\lesssim h^{-1}\int_{\tau}|f|+\int_{\tau}|\nabla f|
\end{equation}
leads to
\begin{equation}\label{bdIII1}
\begin{aligned}
|\Rom{3}_{1}|&\lesssim\sum_{e\in\mathcal{E}_{1}}h^{2+\alpha}\left\{h^{-1}\int_{\tau}|\nabla\bm{p}|\cdot|\bm{q}_{h}|
+\int_{\tau}|\nabla^{2}\bm{p}|\cdot|\bm{q}_{h}|\right\}\\
&\lesssim h^{1+\alpha}\sum_{e\in\mathcal{E}_{1}}\int_{\tau}(|\nabla\bm{p}|+|\nabla^{2}\bm{p}|)\cdot|\bm{q}_{h}|\\
&\lesssim h^{1+\alpha}||\nabla\bm{p}||_{1,\Omega}||\bm{q}_{h}||_{0,\Omega}.
\end{aligned}
\end{equation}
For $e\in\mathcal{E}_{2}$, there is no cancellation. Due to the small measure of the region covered by elements near $e\in\mathcal{E}_{2}$, 
$\Rom{3}_{2}$ is estimated by
\begin{equation}\label{bdIII2}
\begin{aligned}
|\Rom{3}_{2}|&\lesssim h^{3}\sum_{e\in\mathcal{E}_{2}}||\nabla\bm{p}||_{0,\infty,\tau}||\bm{q}_{h}||_{0,\infty,\tau}\\
&\lesssim h||\nabla\bm{p}||_{0,\infty,\Omega}\sum_{e\in\mathcal{E}_{2}}\int_{\tau}|\bm{q}_{h}|\\
&\lesssim h^{1+\frac{\sigma}{2}}||\nabla\bm{p}||_{0,\infty,\Omega}||\bm{q}_{h}||_{0,\Omega}.
\end{aligned}
\end{equation}
The trickiest part of this proof is to bound $\Rom{3}_{3}$. Let $\bm{q}_{h}=\curlvect w_{h}$, where $w_{h}\in\mathcal{S}_{h}$.
We can assume that $\int_{\Omega}w_{h}=0$ by subtracting a constant from $w_{h}$. 
Then by the Poincar\'{e} inequality, we have
\begin{equation}\label{bdqw}
||w_{h}||_{1,\Omega}\lesssim||\bm{q}_{h}||_{0,\Omega}.
\end{equation}
Denote
\begin{equation*}
B_{e}^{(j)}=\alpha_{e}^{(j)}\mathcal{A}_{e}^{(j)}\bm{p}\cot\theta_{e},\quad\bar{B}_{e}^{(j)}=\frac{1}{\ell_{e}}\int_{e}B_{e}.
\end{equation*}
By \cref{rottk,curl:grad}, 
$$\bm{q}_{h}\cdot\bm{n}_{e}=\dfte{w_{h}}.$$ 
Then part $\Rom{3}_{3}$ can be split into:
\begin{equation*}
\begin{aligned}
\Rom{3}_{3}&=\sum_{e\subset\partial\Omega}\int_{e}b_{e}\sum_{j=1}^{3}
(B_{e}^{(j)}-\bar{B}_{e}^{(j)})\dfte{w_{h}}\\
&+\sum_{e\subset\partial\Omega}\int_{e}b_{e}\sum_{j=1}^{3}
\bar{B}_{e}^{(j)}\dfte{w_{h}}=\Rom{3}_{3}^{(1)}+\Rom{3}_{3}^{(2)}.
\end{aligned}
\end{equation*}
The first term can be estimated by \cref{bdqw}:
\begin{equation}\label{bdIII31}
\begin{aligned}
|\Rom{3}_{3}^{(1)}|&\lesssim h^{3}|\bm{p}|_{2,\infty,\Omega}\sum_{e\subset\partial\Omega}\int_{e}|\nabla w_{h}|\\
&\lesssim h^{2}|\bm{p}|_{2,\infty,\Omega}\sum_{e\subset\partial\Omega}\int_{\tau}|\nabla w_{h}|\\
&\lesssim h^{2}|\bm{p}|_{2,\infty,\Omega}||\bm{q}_{h}||_{0,\Omega}.
\end{aligned}
\end{equation}
For $x\in\mathcal{P}$, let $e, e^{\prime}$ be the two edges on $\partial\Omega$ sharing $x$ as an ending point. 
Then the second term becomes
\begin{equation*}
\begin{aligned}
\Rom{3}_{3}^{(2)}&=\sum_{e\subset\partial\Omega}\frac{\ell_{e}}{6}\sum_{j=1}^{3}
\bar{B}_{e}^{(j)}\dfte{w_{h}}\\
&=\sum_{x\in\mathcal{P}}\frac{1}{6}\sum_{j=1}^{3}
\left(\bar{B}_{e}^{(j)}-\bar{B}_{e^{\prime}}^{(j)}\right)w_{h}(x).
\end{aligned}
\end{equation*}
For $x\in\mathcal{P}_{1}$, definitions \cref{alpha,operatorA} together with the $(\alpha,\sigma)$-condition along the boundary
imply cancellation and thus 
\begin{equation}\label{Bep1}
\left|\bar{B}_{e}^{(j)}-\bar{B}_{e^{\prime}}^{(j)}\right|\lesssim h^{2+\alpha}||\nabla\bm{p}||_{1,\infty,\Omega}.
\end{equation}
For $x\in\mathcal{P}_{2}$,
\begin{equation}\label{Bep2}
\left|\bar{B}_{e}^{(j)}-\bar{B}_{e^{\prime}}^{(j)}\right|\leq
\left|\bar{B}_{e}^{(j)}\right|+\left|\bar{B}_{e^{\prime}}^{(j)}\right|\lesssim h^{2}||\nabla\bm{p}||_{0,\infty,\Omega}.
\end{equation}
It follows from the discrete Sobolev inequality
\begin{equation}\label{disSobolev}
||w_{h}||_{0,\infty,\Omega}\lesssim|\log h|^{\frac{1}{2}}||w_{h}||_{1,\Omega},
\end{equation}
the quasi-uniformity, \cref{bdqw,Bep1,Bep2} that
\begin{equation}\label{bdIII32}
\begin{aligned}
|\Rom{3}_{3}^{(2)}|&\lesssim\left(\sum_{x\in\mathcal{P}_{1}}
h^{2+\alpha}||\nabla\bm{p}||_{1,\infty,\Omega}+
\sum_{x\in\mathcal{P}_{2}}h^{2}||\nabla\bm{p}||_{0,\infty,\Omega}\right)||w||_{0,\infty,\partial\Omega}\\
&\lesssim h^{1+\alpha}|\log h|^{\frac{1}{2}}||\nabla\bm{p}||_{1,\infty,\Omega}||w_{h}||_{1,\Omega}\\
&\lesssim h^{1+\alpha}|\log h|^{\frac{1}{2}}||\nabla\bm{p}||_{1,\infty,\Omega}||\bm{q}_{h}||_{0,\Omega}.
\end{aligned}
\end{equation}
Now, combining \cref{maintotal,bdI,bdII,bdIII1,bdIII2,bdIII31,bdIII32}, we obtain \cref{mainlemma}.
\end{proof}

In the rest of this paper, $\rho$ \emph{always} denotes $\min(1,\alpha,\sigma/2)$ unless confusion arises.

\begin{remark}\label{piecewise}
The proof of \cref{mainlemma} is completely local and thus \cref{mainlemmaeq} holds on piecewise $(\alpha,\sigma)$-grids.
\end{remark}

\cref{mainlemma} can be easily generalized. By checking the proof of \cref{mainlemma}, one can see that the quasi-uniformity is only used to guarantee the discrete Sobolev inequality and bound the number of vertices lying on $\partial\Omega$. By test function $\bm{q}_{h}\in\curlvect{S}_{h}$ for the Neumann boundary condition having zero normal trace,
we have the following estimate.
\begin{corollary}\label{lemmaNeumann}
Let $\mathcal{T}_{h}$ be an $(\alpha,\sigma)$-grid without assuming adjacent boundary elements form approximate parallelograms
in \cref{alphasigma}. Then
for $\bm{q}_{h}\in\curlvect\mathcal{S}_{h}$ and $\bm{q}_{h}\cdot\bm{n}=0$,
\begin{equation*}
|(\bm{p}-\Pi_{h}\bm{p},\bm{q}_{h})|\lesssim h^{1+\rho}
\left(||\nabla\bm{p}||_{0,\infty,\Omega}+||\bm{p}||_{2,\Omega}\right)||\bm{q}_{h}||_{0,\Omega}.
\end{equation*}
\end{corollary}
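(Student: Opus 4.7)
The plan is to follow the proof of \cref{mainlemma} almost verbatim, observing that the quasi-uniformity hypothesis and condition \ref{alphasigma:P} on boundary vertices in \cref{alphasigma} were invoked only to control the boundary contribution $\Rom{3}_3$, which now vanishes identically under the Neumann-type assumption $\bm{q}_h\cdot\bm{n}=0$.

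First, I would invoke \cref{interp:cr:rt,err1} exactly as in \cref{maintotal} to decompose $(\bm{p}-\Pi_h\bm{p},\bm{q}_h)=\Rom{1}+\Rom{2}+\Rom{3}$, and then exploit the continuity of $\bm{q}_h\cdot\bm{n}_e$ across each interior edge to recast $\Rom{3}$ as the edge-wise sum $\Rom{3}_1+\Rom{3}_2+\Rom{3}_3$.

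Second, the bounds on $\Rom{1}$, $\Rom{2}$, $\Rom{3}_1$, and $\Rom{3}_2$ carry over verbatim. Standard interpolation theory yields $|\Rom{1}|+|\Rom{2}|\lesssim h^{2}|\bm{p}|_{2,\Omega}\|\bm{q}_h\|_{0,\Omega}$; the $\mathcal{O}(h^{1+\alpha})$ approximate parallelogram property for $e\in\mathcal{E}_1$ combined with the trace inequality \cref{trace} gives $|\Rom{3}_1|\lesssim h^{1+\alpha}\|\nabla\bm{p}\|_{1,\Omega}\|\bm{q}_h\|_{0,\Omega}$; and the small-measure estimate for $\mathcal{E}_2$ together with the inverse-estimate trick from \cref{bdIII2} yields $|\Rom{3}_2|\lesssim h^{1+\sigma/2}\|\nabla\bm{p}\|_{0,\infty,\Omega}\|\bm{q}_h\|_{0,\Omega}$. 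Crucially, none of these steps touches the boundary or uses quasi-uniformity, so the weaker hypotheses of the corollary suffice.

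Third, and this is the key simplification, the boundary contribution
\begin{equation*}
\Rom{3}_3=\sum_{e\subset\partial\Omega}\cot\theta_e\int_e b_e\left(\sum_{j=1}^{3}\alpha_e^{(j)}\mathcal{A}_e^{(j)}\bm{p}\right)\bm{q}_h\cdot\bm{n}_e
\end{equation*}
is identically zero because $\bm{q}_h\cdot\bm{n}_e=\bm{q}_h\cdot\bm{n}=0$ along every boundary edge. Consequently the discrete Sobolev inequality \cref{disSobolev} and the split $\mathcal{P}=\mathcal{P}_1\uplus\mathcal{P}_2$ are never needed, and the logarithmic factor $|\log h|^{\frac{1}{2}}$ drops out. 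Assembling the three surviving contributions and using $\|\nabla\bm{p}\|_{1,\Omega}\leq\|\bm{p}\|_{2,\Omega}$ delivers the claimed bound with $\rho=\min(1,\alpha,\sigma/2)$.

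No serious obstacle is anticipated; the corollary is essentially a bookkeeping observation that the Neumann trace condition annihilates precisely the term whose control in \cref{mainlemma} forced the quasi-uniformity and boundary-vertex hypotheses. The only point requiring mild care is to verify explicitly that each of the estimates for $\Rom{1}$, $\Rom{2}$, $\Rom{3}_1$, $\Rom{3}_2$ is genuinely local and does not hide a use of global quasi-uniformity, which a line-by-line reread of the previous proof confirms.
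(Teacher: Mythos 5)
Your proposal is correct and follows essentially the same route as the paper: the paper's own proof simply remarks that one repeats the argument of \cref{mainlemma} while discarding the boundary term $\Rom{3}_{3}$, which is exactly your observation that $\bm{q}_{h}\cdot\bm{n}=0$ annihilates it, thereby removing the need for quasi-uniformity, the discrete Sobolev inequality, the logarithmic factor, and the $W^{1}_{\infty}$ control of $\nabla\bm{p}$. Your accounting of which norms survive in each of $\Rom{1},\Rom{2},\Rom{3}_{1},\Rom{3}_{2}$ matches the stated bound.
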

\begin{proof}
The proof is basically the same as \cref{mainlemma}. But we get rid of bounding $\Rom{3}_{3}$, the error occuring on boundary elements.
Then the regularity assumption is weakened.
\end{proof}

Let $\text{diam}\tau$ denote the diameter of $\tau$. The quasi-uniformity in \cref{mainlemma} can be replaced by
\begin{equation}\label{hgamma}
\min_{\tau\in\mathcal{T}_{h}}\text{diam}\tau\gtrsim h^{\gamma},\quad\gamma\geq1.
\end{equation}
\begin{corollary}\label{lemmahgamma}
Assume the condition \cref{hgamma} instead of quasi-uniformity in \cref{mainlemma}. Then
\cref{mainlemmaeq} holds with a smaller $\rho$:
\begin{equation*}
\rho=\min(1,1+\alpha-\gamma,\frac{\sigma}{2}).
\end{equation*}
\end{corollary}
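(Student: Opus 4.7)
The plan is to revisit the proof of \cref{mainlemma} term by term and track where the quasi-uniformity hypothesis was actually used, then substitute $h_\tau := \mathrm{diam}\,\tau \gtrsim h^\gamma$ and re-estimate only the affected pieces. On inspection, quasi-uniformity entered in exactly three places: the factor $h^{-1}$ in the trace inequality \cref{trace}, the discrete Sobolev inequality \cref{disSobolev}, and the bookkeeping bound $|\mathcal{P}_1| \lesssim h^{-1}$ on the number of boundary vertices. Every other ingredient -- the interpolation estimates for $\Rom{1}$ and $\Rom{2}$, the local cancellation $|\beta_e^{(j)}| \lesssim h^{2+\alpha}$ (which rests only on the parallelogram condition formulated against the global scale $h$, not on how small individual triangles are), the small-measure estimate for $\Rom{3}_2$, and the Poincar\'e inequality \cref{bdqw} -- is oblivious to quasi-uniformity and can be reused verbatim.

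Next I would update each of the three sensitive ingredients. First, in the trace inequality the factor $h^{-1}$ becomes $h_\tau^{-1} \leq h^{-\gamma}$; plugging this into $\Rom{3}_1$ and $\Rom{3}_3^{(1)}$ yields revised bounds of order $h^{2+\alpha-\gamma}\|\nabla\bm{p}\|_{1,\Omega}\|\bm{q}_h\|_{0,\Omega}$ and $h^{3-\gamma}|\bm{p}|_{2,\infty,\Omega}\|\bm{q}_h\|_{0,\Omega}$ respectively. Second, since $|\log h_\tau|^{1/2} \leq \gamma^{1/2}|\log h|^{1/2}$, the discrete Sobolev inequality loses only the harmless constant $\gamma^{1/2}$. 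Third, because each boundary edge has length at least $h^\gamma$ and $|\partial\Omega|$ is finite, the count becomes $|\mathcal{P}_1| \lesssim h^{-\gamma}$; inserting this into $\Rom{3}_3^{(2)}$ changes its bound from $h^{1+\alpha}|\log h|^{1/2}\|\bm{q}_h\|_{0,\Omega}$ to $h^{-\gamma}\cdot h^{2+\alpha}\cdot|\log h|^{1/2}\|\bm{q}_h\|_{0,\Omega} = h^{2+\alpha-\gamma}|\log h|^{1/2}\|\bm{q}_h\|_{0,\Omega}$.

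Collecting the revised exponents, the overall bound takes the form $h^{1+\rho}|\log h|^{1/2}\|\nabla\bm{p}\|_{1,\infty,\Omega}\|\bm{q}_h\|_{0,\Omega}$ with $\rho = \min(1,1+\alpha-\gamma,\sigma/2)$: the rate $1+\alpha-\gamma$ arises symmetrically from $\Rom{3}_1$ and $\Rom{3}_3^{(2)}$, the rate $\sigma/2$ from $\Rom{3}_2$ (whose small-measure argument is unaffected), and the baseline $1$ from $\Rom{1}$, $\Rom{2}$, and $\Rom{3}_3^{(1)}$. The main subtlety is the revised bound for $\Rom{3}_3^{(2)}$, where the per-vertex cancellation $h^{2+\alpha}$ inherited from the $(\alpha,\sigma)$-condition on boundary parallelograms must be balanced against a larger boundary-vertex count $h^{-\gamma}$; no new analytic tool is required, only careful local bookkeeping of element sizes along $\partial\Omega$.
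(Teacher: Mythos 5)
Your overall strategy---rerun the proof of \cref{mainlemma} and patch only the places where quasi-uniformity enters---is the same as the paper's, and your treatment of the two ingredients the paper actually modifies (the discrete Sobolev inequality, which only picks up a harmless constant $\gamma^{1/2}$, and the boundary-vertex count $|\mathcal{P}|\lesssim h^{-\gamma}$ feeding into $\Rom{3}_{3}^{(2)}$) is correct and matches the paper. The problem is your third modification. You replace the factor $h^{-1}$ in the trace inequality \cref{trace} by $h_{\tau}^{-1}\leq h^{-\gamma}$ while keeping the numerators $|\beta_{e}^{(j)}|\lesssim h^{2+\alpha}$ and $|B_{e}^{(j)}-\bar{B}_{e}^{(j)}|\lesssim h^{3}$ at their quasi-uniform values. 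This is inconsistent bookkeeping: the correct local statements are $|\beta_{e}^{(j)}|\lesssim h_{\tau}h^{1+\alpha}$ (the approximate-parallelogram condition perturbs edges by $\mathcal{O}(h^{1+\alpha})$ around a baseline of size $h_{\tau}$, so $|\tau|-|\tau^{\prime}|$, $\ell_{k-1}^{2}-\ell_{k+1}^{2}$ and $\cot\theta_{e}-\cot\theta_{e}^{\prime}$ scale like $h_{\tau}h^{1+\alpha}$, $h_{\tau}h^{1+\alpha}$ and $h^{1+\alpha}/h_{\tau}$ respectively) and $|B_{e}^{(j)}-\bar{B}_{e}^{(j)}|\lesssim h_{\tau}^{3}|\bm{p}|_{2,\infty,\Omega}$. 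The local factor $h_{\tau}^{-1}$ from the trace inequality is then exactly cancelled, so $\Rom{3}_{1}$ and $\Rom{3}_{3}^{(1)}$ retain their original rates $h^{1+\alpha}$ and $h^{2}$; this is why the paper can assert that the \emph{only} term affected is $\Rom{3}_{3}^{(2)}$.

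As written, your argument does not prove the stated corollary. Your bound $|\Rom{3}_{3}^{(1)}|\lesssim h^{3-\gamma}$ corresponds to a rate $\rho\leq 2-\gamma$, which is strictly smaller than $\min(1,1+\alpha-\gamma,\sigma/2)$ whenever $\gamma>1$, $\alpha>1$ and $\sigma$ is large (for instance $\alpha=2$, $\gamma=3/2$, $\sigma=4$ gives claimed $\rho=1$, but your estimate only delivers $h^{3/2}$ from this term). Your concluding paragraph then silently drops this: you list $\Rom{3}_{3}^{(1)}$ as contributing ``the baseline $1$'' even though two sentences earlier you bounded it by $h^{3-\gamma}$. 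Either carry out the local $h_{\tau}$-bookkeeping above so that $\Rom{3}_{1}$ and $\Rom{3}_{3}^{(1)}$ are untouched, or accept a genuinely smaller $\rho$; you cannot have both.
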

\begin{proof}
The proof is basically the same as \cref{mainlemma}. \cref{hgamma} ensures the discrete Sobolev inequality (cf. \cite{Brenner2008}).
The number of boundary vertices is bounded by
\begin{equation*}
|\mathcal{P}|\lesssim h^{-\gamma}.
\end{equation*}
Therefore, the only difference is that the bound for $\Rom{3}_{3}^{(2)}$ becomes
\begin{equation*}
|\Rom{3}_{3}^{(2)}|\lesssim h^{1+\min(1+\alpha-\gamma,1)}|\log h|^{\frac{1}{2}}
||\nabla\bm{p}||_{1,\infty,\Omega}||\bm{q}_{h}||_{0,\Omega}.
\end{equation*}
\end{proof}
\begin{remark}
The condition \cref{hgamma} also appears in pointwise a posteriori error estimation (cf. \cite{Ainsworth2000}).
\end{remark}

At the end of this section, we show that the logarithmic factor in \cref{mainlemma} can be removed by using the following lemma
proved by Brandts \cite{Brandts1994}.
\begin{lemma}\label{bdynorm}
Let $\Omega_{h}=\{x\in\Omega: \dist(x,\partial\Omega)\leq h\}$. Then
\begin{equation*}
||u||_{0,\Omega_{h}}\lesssim h^{s}||u||_{s,\Omega},\quad0\leq s\leq\frac{1}{2}.
\end{equation*}
\end{lemma}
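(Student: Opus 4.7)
The plan is to establish the two endpoint cases $s=0$ and $s=1/2$ separately, and then obtain the full range $s\in[0,1/2]$ by Hilbert-space interpolation. The $s=0$ case is trivial because $\Omega_{h}\subset\Omega$ gives $\|u\|_{0,\Omega_{h}}\leq\|u\|_{0,\Omega}$, so the interesting content lies in the $s=1/2$ endpoint.

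For $s=1/2$, I would foliate $\Omega_{h}$ by the level sets
$$\Gamma_{t}=\{x\in\Omega:\dist(x,\partial\Omega)=t\},\quad t\in[0,h],$$
and use the coarea formula for the Lipschitz function $\dist(\cdot,\partial\Omega)$ (whose gradient has modulus $1$ a.e.) to rewrite
$$\|u\|_{0,\Omega_{h}}^{2}=\int_{0}^{h}\int_{\Gamma_{t}}|u|^{2}\,d\sigma\,dt.$$
The key claim is that $\|u\|_{0,\Gamma_{t}}\lesssim\|u\|_{1/2,\Omega}$ uniformly for $t\in[0,h]$. This I would get from the standard $H^{1/2}$ trace theorem applied to the shrunken subdomain $\Omega^{t}=\{x\in\Omega:\dist(x,\partial\Omega)>t\}$, whose boundary $\Gamma_{t}$ is bi-Lipschitz equivalent to $\partial\Omega$ with constants bounded uniformly in $t$ for small $t$. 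Substituting the uniform bound into the slice integral gives $\|u\|_{0,\Omega_{h}}^{2}\lesssim h\|u\|_{1/2,\Omega}^{2}$, which is the endpoint.

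With the two endpoints in hand, I would view the restriction map $T:u\mapsto u|_{\Omega_{h}}$ as a bounded linear operator satisfying $\|T\|_{L^{2}(\Omega)\to L^{2}(\Omega_{h})}\leq1$ and $\|T\|_{H^{1/2}(\Omega)\to L^{2}(\Omega_{h})}\lesssim h^{1/2}$. Since the interpolation scale satisfies $[L^{2}(\Omega),H^{1/2}(\Omega)]_{\theta}=H^{\theta/2}(\Omega)$, the standard interpolation theorem yields $\|T\|_{H^{s}(\Omega)\to L^{2}(\Omega_{h})}\lesssim h^{s}$ for $s=\theta/2\in[0,1/2]$, which is exactly the claim.

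The main obstacle is justifying the uniform trace bound on $\Gamma_{t}$ when $\Omega$ is a polygon: the level curves inherit corners from $\partial\Omega$ and the equidistant foliation is only Lipschitz near those corners. The cleanest remedy is a finite partition of unity subordinate to an atlas that separates flat edge patches from small neighborhoods of corners; on each edge patch a local change of coordinates turns $\Gamma_{t}$ into an affine translate inside a half-plane where the trace theorem applies with constants independent of $t$, while the corner patches contribute a term of order $h^{2}$ in measure and are absorbed trivially. As a possible shortcut avoiding the slice argument for $s<1/2$, one can invoke the Hardy inequality $\int_{\Omega}|u|^{2}/\dist(\cdot,\partial\Omega)^{2s}\lesssim\|u\|_{s,\Omega}^{2}$ and bound $\int_{\Omega_{h}}|u|^{2}\leq h^{2s}\int_{\Omega}|u|^{2}/\dist(\cdot,\partial\Omega)^{2s}$ directly, leaving only the limiting case $s=1/2$ to be handled by the foliation/interpolation argument above.
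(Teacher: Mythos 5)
The paper does not prove this lemma at all---it is imported verbatim from \cite{Brandts1994}---so your argument has to stand on its own, and it fails at the decisive step. The claim that $||u||_{0,\Gamma_{t}}\lesssim||u||_{1/2,\Omega}$ uniformly in $t$ ``by the standard $H^{1/2}$ trace theorem'' invokes a theorem that does not exist: the trace operator $H^{s}(\Omega)\to L^{2}(\partial\Omega)$ is bounded only for $s>1/2$ and is unbounded precisely at $s=1/2$. The difficulty has nothing to do with the corners of the polygon, which is where you locate your ``main obstacle''; it is already present for a flat boundary. An $H^{1/2}$ function need not even be bounded near the boundary: taking $\hat{v}(\eta)=(1+|\eta|)^{-1}(\log(2+|\eta|))^{-1}$ gives $v\in H^{1/2}(\mathbb{R})$ with $v(y)\gtrsim\log\log(1/y)$ as $y\to0^{+}$, and the tensor product $u(x,y)=\phi(x)\psi(y)v(y)$ on the unit square then satisfies $||u||_{0,\Omega_{h}}\gtrsim h^{1/2}\log\log(1/h)$ while $||u||_{1/2,\Omega}$ is a fixed finite constant. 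So the uniform slice bound is false, and the $s=1/2$ endpoint inequality itself cannot hold with a constant independent of $h$ for the standard $H^{1/2}(\Omega)$ norm---yet that endpoint is exactly what your interpolation scheme requires as input.

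Your proposed shortcut has the same defect in a quieter form: the fractional Hardy inequality $\int_{\Omega}|u|^{2}\dist(x,\partial\Omega)^{-2s}\,dx\leq C(s)||u||_{s,\Omega}^{2}$ is valid for $0<s<1/2$ but with $C(s)\to\infty$ as $s\to1/2$, and it fails at $s=1/2$ (the classical breakdown of the Hardy inequality for $H^{1/2}$, equivalently the strict inclusion of $H^{1/2}_{00}$ in $H^{1/2}$). The Hardy route therefore legitimately proves the lemma for each \emph{fixed} $s<1/2$ with an $s$-dependent constant---which, it is worth noting, is all the paper actually needs, since in \cref{lemmanolog} the lemma is only invoked with the fixed small exponent $s=\varepsilon/2$---but it does not deliver the statement as written, uniformly up to and including $s=1/2$. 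What is sound in your proposal is the trivial $s=0$ case, the coarea bookkeeping, and the interpolation framework $[L^{2}(\Omega),H^{1/2}(\Omega)]_{\theta}=H^{\theta/2}(\Omega)$; what is missing is any valid treatment of the $s=1/2$ endpoint, where both of the tools you reach for (trace and Hardy) break down, and the counterexample above shows that the endpoint is at best delicate (requiring a logarithmic correction or a strengthened $H^{1/2}_{00}$-type norm). This concentration of all the difficulty at the endpoint is presumably why the paper states the lemma by reference to \cite{Brandts1994} rather than reproving it.
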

\begin{corollary}\label{lemmanolog}
Assume the same conditions in \cref{mainlemma}. Then $\forall\varepsilon>0$,
\begin{equation*}
|(\bm{p}-\Pi_{h}\bm{p},\bm{q}_{h})|
\lesssim h^{1+\rho}||\nabla\bm{p}||_{2+\varepsilon,\Omega}||\bm{q}_{h}||_{0,\Omega},
\end{equation*}
\end{corollary}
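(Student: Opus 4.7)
The plan is to revisit the proof of \cref{mainlemma} and to identify that the sole source of the $|\log h|^{1/2}$ factor is the discrete Sobolev inequality \cref{disSobolev} applied to $\|w_h\|_{0,\infty,\partial\Omega}$ in the estimate of $\Rom{3}_3^{(2)}$. The bounds on $\Rom{1}$, $\Rom{2}$, $\Rom{3}_1$, $\Rom{3}_2$, and $\Rom{3}_3^{(1)}$ are already logarithm-free and carry through verbatim, so that after invoking the Sobolev embedding $H^{2+\varepsilon}(\Omega)\hookrightarrow W^{1,\infty}(\Omega)$ in two dimensions to replace $\|\nabla\bm p\|_{1,\infty,\Omega}$ by $\|\nabla\bm p\|_{2+\varepsilon,\Omega}$ they yield the desired bound. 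The task therefore reduces to re-estimating $\Rom{3}_3^{(2)}$.

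Keeping the vertex-sum representation $\Rom{3}_3^{(2)}=\tfrac{1}{6}\sum_{x\in\mathcal P}\sum_{j}(\bar B_e^{(j)}-\bar B_{e'}^{(j)})\,w_h(x)$ from the proof of \cref{mainlemma}, I would split the sum across $\mathcal P_1$ and $\mathcal P_2$ and apply Cauchy--Schwarz in the vertex index on each piece rather than factoring out $\|w_h\|_{0,\infty,\partial\Omega}$. On $\mathcal P_1$ the $(\alpha,\sigma)$-cancellation $|\bar B_e^{(j)}-\bar B_{e'}^{(j)}|\lesssim h^{2+\alpha}\|\nabla\bm p\|_{1,\infty,\Omega}$ together with $|\mathcal P_1|\lesssim h^{-1}$ yields $(\sum_{\mathcal P_1}(\bar B_e^{(j)}-\bar B_{e'}^{(j)})^2)^{1/2}\lesssim h^{3/2+\alpha}\|\nabla\bm p\|_{1,\infty,\Omega}$, while the trapezoidal-type quadrature equivalence on a quasi-uniform boundary mesh gives $(\sum_{\mathcal P_1}|w_h(x)|^2)^{1/2}\lesssim h^{-1/2}\|w_h\|_{0,\partial\Omega}$. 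The standard trace inequality combined with the Poincar\'e estimate \cref{bdqw} then yields $\|w_h\|_{0,\partial\Omega}\lesssim\|\bm q_h\|_{0,\Omega}$, producing a $\mathcal P_1$-contribution of order $h^{1+\alpha}\|\nabla\bm p\|_{1,\infty,\Omega}\|\bm q_h\|_{0,\Omega}$ without any logarithm.

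On the finite set $\mathcal P_2$ there is no parallelogram cancellation, so we only have $|\bar B_e^{(j)}-\bar B_{e'}^{(j)}|\lesssim h^2\|\nabla\bm p\|_{0,\infty,\Omega}$. To evaluate $w_h$ at such a vertex $x$ without the discrete Sobolev inequality, I would chain the local inverse estimate $|w_h(x)|\lesssim h^{-1}\|w_h\|_{0,\tau_x}$ on an adjacent boundary triangle $\tau_x\subset\Omega_h$ with \cref{bdynorm} at $s=1/2$, giving $\|w_h\|_{0,\tau_x}\le\|w_h\|_{0,\Omega_h}\lesssim h^{1/2}\|w_h\|_{1/2,\Omega}\lesssim h^{1/2}\|\bm q_h\|_{0,\Omega}$; hence $|w_h(x)|\lesssim h^{-1/2}\|\bm q_h\|_{0,\Omega}$, and the $\mathcal P_2$-contribution is bounded by $h^{3/2}\|\nabla\bm p\|_{0,\infty,\Omega}\|\bm q_h\|_{0,\Omega}$, again without any logarithm.

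The main technical difficulty is that this $\mathcal P_2$ estimate is sharp for $\rho\le 1/2$ but falls short of the target $h^{1+\rho}$ when $\rho$ is close to $1$, and this is exactly where the extra $\varepsilon$ regularity in $\|\nabla\bm p\|_{2+\varepsilon,\Omega}$ is needed. Concretely, combining a sharper inverse inequality $\|w_h\|_{1+\varepsilon,\Omega}\lesssim h^{-\varepsilon}\|w_h\|_{1,\Omega}$ with the embedding $H^{2+\varepsilon}(\Omega)\hookrightarrow W^{1,\infty}(\Omega)$ lets the $h^{-\varepsilon}$ loss incurred in the pointwise bound on $w_h$ at the $\kappa$ corner vertices be paid for by the $h^{\varepsilon}$ gain hidden in the stronger Sobolev norm on $\bm p$, so that after summing the $\mathcal P_1$, $\mathcal P_2$, and interior contributions the log-free bound in the statement follows.
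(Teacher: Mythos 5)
Your diagnosis of where the logarithm enters is correct, and your treatment of the $\mathcal{P}_{1}$-sum is a legitimate alternative to the paper's argument: replacing the uniform bound $||w_{h}||_{0,\infty,\partial\Omega}$ by Cauchy--Schwarz over the boundary vertices, the quadrature equivalence $\left(\sum_{x}|w_{h}(x)|^{2}\right)^{1/2}\lesssim h^{-1/2}||w_{h}||_{0,\partial\Omega}$, and the trace inequality together with \cref{bdqw} does give a $\mathcal{P}_{1}$-contribution of order $h^{1+\alpha}||\nabla\bm{p}||_{1,\infty,\Omega}||\bm{q}_{h}||_{0,\Omega}$ with no logarithm. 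The paper instead keeps the discrete Sobolev inequality \cref{disSobolev} (and hence the factor $|\log h|^{1/2}$) for both $\mathcal{P}_{1}$ and $\mathcal{P}_{2}$, and removes the logarithm at the very end by a different device, described below.

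The genuine gap is in your treatment of $\mathcal{P}_{2}$. You correctly observe that the chain ``inverse estimate plus \cref{bdynorm} at $s=1/2$'' only yields $|w_{h}(x)|\lesssim h^{-1/2}||\bm{q}_{h}||_{0,\Omega}$ and hence a contribution of order $h^{3/2}$, which is insufficient precisely in the most important case $\rho=1$ (e.g.\ uniform grids, where the claimed rate is $h^{2}$). Your proposed repair does not close this. The inverse estimate $||w_{h}||_{1+\varepsilon,\Omega}\lesssim h^{-\varepsilon}||w_{h}||_{1,\Omega}$ combined with $H^{1+\varepsilon}(\Omega)\hookrightarrow L^{\infty}(\Omega)$ gives $|w_{h}(x)|\lesssim h^{-\varepsilon}||\bm{q}_{h}||_{0,\Omega}$ and hence a $\mathcal{P}_{2}$-contribution of order $h^{2-\varepsilon}$; but the embedding $H^{2+\varepsilon}(\Omega)\hookrightarrow W^{1,\infty}(\Omega)$ merely replaces one norm of $\bm{p}$ by a larger one and produces no compensating power of $h$ --- there is no ``$h^{\varepsilon}$ gain hidden in the stronger Sobolev norm.'' The missing idea, which is exactly what the paper's proof of \cref{lemmanolog} supplies, is to localize the norms of $\bm{p}$ to the boundary strip $\Omega_{h}$ and apply \cref{bdynorm} to $\bm{p}$ rather than to $w_{h}$: since all the quantities $\bar{B}_{e}^{(j)}$ in $\Rom{3}_{3}^{(2)}$ are supported on boundary elements, one may write $||\nabla\bm{p}||_{1,\infty,\Omega_{h}}\lesssim||\nabla\bm{p}||_{2+\varepsilon/2,\Omega_{h}}\lesssim h^{\varepsilon/2}||\nabla\bm{p}||_{2+\varepsilon,\Omega}$, and the factor $h^{\varepsilon/2}$ extracted from the thinness of the strip absorbs $|\log h|^{1/2}$ (and would equally absorb your $h^{-\varepsilon'}$ loss for $\varepsilon'<\varepsilon/2$). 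Without this localization your argument establishes the corollary only for $\rho<1$, after choosing $\varepsilon'\leq1-\rho$.
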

\begin{proof}
The bounds for $\Rom{1}, \Rom{2}, \Rom{3}_{1}, \Rom{3}_{2}, \Rom{3}_{3}^{(1)}$ are the same as \cref{mainlemma}.
The bound for $\Rom{3}_{3}^{(2)}$ is improved by the Sobolev embedding
$H^{2+\frac{\varepsilon}{2}}(\Omega_{h})\subset W^{1}_{\infty}(\Omega_{h})$ and \cref{bdynorm}:
\begin{equation*}
\begin{aligned}
|\Rom{3}_{3}^{(2)}|&\lesssim\left(\sum_{x\in\mathcal{P}_{1}}
h^{2+\alpha}||\nabla\bm{p}||_{1,\infty,\Omega_{h}}+
\sum_{x\in\mathcal{P}_{2}}h^{2}||\nabla\bm{p}||_{0,\infty,\Omega_{h}}\right)||w||_{0,\infty,\partial\Omega}\\
&\lesssim h^{1+\alpha}|\log h|^{\frac{1}{2}}||\nabla\bm{p}||_{1,\infty,\Omega_{h}}||w_{h}||_{1,\Omega}\\
&\lesssim h^{1+\alpha}|\log h|^{\frac{1}{2}}||\nabla\bm{p}||_{2+\frac{\varepsilon}{2},\Omega_{h}}||\bm{q}_{h}||_{0,\Omega}\\
&\lesssim h^{1+\alpha+\frac{\varepsilon}{2}}|\log h|^{\frac{1}{2}}||\nabla\bm{p}||_{2+\varepsilon,\Omega}||\bm{q}_{h}||_{0,\Omega}.
\end{aligned}
\end{equation*}
Therefore, $|\log h|^{1/2}$ is compensated by $h^{\varepsilon/2}$.
\end{proof}
\begin{remark}\label{improveBaXu}
The factor $|\log h|^{1/2}$ appearing in Lemma 2.5 and Theorem 3.1 in \cite{BaXu2003} can be removed in the same way.
\end{remark}

\section{Superconvergence results}\label{sec:superp}
From \cref{mix:v,mix:dv}, we have the error equation
\begin{subequations}\label{err:eqn}
\begin{align}
&(\bm{\alpha}(\bm{p}-\bm{p}_{h}),\bm{q}_{h})-(\bm{q}_{h},\bm{\beta}(u-u_{h}))
+(\divg\bm{q}_{h},u-u_{h})=0,\quad\bm{q}_{h} \in\mathcal{Q}_{h},\\
&-(\divg(\bm{p}-\bm{p}_{h}),v_{h})+(c(u-u_{h}),v_{h})=0,\quad v_{h}\in\mathcal{V}_{h}.
\end{align}
\end{subequations}

From \cite{Douglas1985}, we have the following a priori error estimates:
\begin{subequations}\label{apriori}
\begin{align}
&||\bm{p}-\bm{p}_{h}||_{0,\Omega}\lesssim h||u||_{2,\Omega},\\
&||\divg(\bm{p}-\bm{p}_{h})||_{0,\Omega}\lesssim h||u||_{3,\Omega},\\
&||u-u_{h}||_{0,\Omega}\lesssim h||u||_{2,\Omega}.
\end{align}
\end{subequations}

The following is the well-known superconvergence result for $||P_{h}u-u_{h}||_{0,\Omega}$ on general unstructured meshes
(cf. \cite{Douglas1985}). 
\begin{theorem}\label{supercloseu}
\begin{equation*}
||P_{h}u-u_{h}||_{0,\Omega}\lesssim h^{2}||u||_{3,\Omega}.
\end{equation*}
\end{theorem}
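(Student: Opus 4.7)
The plan is an Aubin--Nitsche duality argument against the adjoint problem. Given $w_h := P_h u - u_h \in \mathcal{V}_h$, I would first introduce $\phi \in H^2(\Omega) \cap H^1_0(\Omega)$ solving the adjoint equation
\begin{equation*}
-\divg(\bm{A}\nabla\phi) - \bm{b}\cdot\nabla\phi + c\phi = w_h \quad \text{in } \Omega,
\end{equation*}
set $\bm{\chi} = \bm{A}\nabla\phi$, and invoke the full regularity hypothesis \cref{fullregularity} applied to this (formally adjoint) elliptic operator to obtain $\|\phi\|_{2,\Omega} + \|\bm{\chi}\|_{1,\Omega} \lesssim \|w_h\|_{0,\Omega}$. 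In mixed form, the pair $(\bm{\chi},\phi)$ satisfies $\bm{\alpha}\bm{\chi} - \nabla\phi = 0$ and $\divg\bm{\chi} - \bm{\beta}\cdot\bm{\chi} + c\phi = w_h$ with $\phi = 0$ on $\partial\Omega$.

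Next I would expand
\begin{equation*}
\|w_h\|_{0,\Omega}^2 = (w_h, \divg\bm{\chi}) - (w_h, \bm{\beta}\cdot\bm{\chi}) + (w_h, c\phi)
\end{equation*}
and rewrite each term using the error equations \cref{err:eqn}. Testing the first equation of \cref{err:eqn} with $\bm{q}_h = \Pi_h\bm{\chi}$ and invoking the commuting diagram \cref{com:diag} to get $\divg \Pi_h\bm{\chi} = P_h\divg\bm{\chi}$ yields
\begin{equation*}
(\divg\bm{\chi}, w_h) = (P_h\divg\bm{\chi}, u - u_h) = -(\bm{\alpha}(\bm{p}-\bm{p}_h), \Pi_h\bm{\chi}) + (\Pi_h\bm{\chi}, \bm{\beta}(u-u_h)).
\end{equation*}
The leading piece $(\bm{\alpha}(\bm{p}-\bm{p}_h), \bm{\chi}) = (\bm{p}-\bm{p}_h, \nabla\phi) = -(\divg(\bm{p}-\bm{p}_h), \phi)$ can then be treated by testing the second equation of \cref{err:eqn} with $v_h = P_h\phi$, which produces a $(c(u-u_h), \phi)$ term that cancels against part of $(w_h, c\phi)$, modulo projection residuals of the form $(\divg(\bm{p}-\bm{p}_h), \phi - P_h\phi)$, $(\bm{\alpha}(\bm{p}-\bm{p}_h), \bm{\chi} - \Pi_h\bm{\chi})$, and $(u - P_h u, \bm{\beta}\cdot\bm{\chi} - P_h(\bm{\beta}\cdot\bm{\chi}))$.

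Each such residual gains one extra power of $h$ from the approximation bounds \cref{approxRT} on top of the $O(h)$ a priori estimates \cref{apriori}: for example $\|\bm{\chi}-\Pi_h\bm{\chi}\|_{0,\Omega}\lesssim h\|\bm{\chi}\|_{1,\Omega}$ paired with $\|\divg(\bm{p}-\bm{p}_h)\|_{0,\Omega}\lesssim h\|u\|_{3,\Omega}$ gives an $O(h^2)$ contribution. Summing all residuals produces $\|w_h\|_{0,\Omega}^2 \lesssim h^2 \|u\|_{3,\Omega}(\|\phi\|_{2,\Omega} + \|\bm{\chi}\|_{1,\Omega}) \lesssim h^2\|u\|_{3,\Omega}\|w_h\|_{0,\Omega}$, from which the theorem follows upon dividing by $\|w_h\|_{0,\Omega}$.

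The hardest part will be the bookkeeping for the variable convection--reaction coefficients $\bm{\beta}$ and $c$: unlike the $\bm{b}=\bm{0}, c=0$ situation, the expressions $(\Pi_h\bm{\chi}, \bm{\beta}(u-u_h))$ and $(c(u-u_h), P_h\phi)$ do not vanish against the discrete test spaces and must each be split into a \emph{smooth} part (which is reinserted via the adjoint equation) plus an interpolation residual that is individually verified to be $O(h^2)$. This is the careful accounting carried out by Douglas--Roberts \cite{Douglas1985}, which the theorem statement attributes to them.
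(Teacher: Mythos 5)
The paper does not prove \cref{supercloseu} at all: it quotes the result as ``well-known'' and cites Douglas--Roberts \cite{Douglas1985}, so there is no in-paper argument to compare against. Your Aubin--Nitsche duality argument is precisely the route taken in that reference, and its skeleton is sound: test \cref{err:eqn} with $\bm{q}_h=\Pi_h\bm{\chi}$ and $v_h=P_h\phi$, use $\divg\Pi_h\bm{\chi}=P_h\divg\bm{\chi}$ from \cref{com:diag}, and collect projection residuals, each of which is $\mathcal{O}(h^2)$ by pairing \cref{approxRT} with \cref{apriori}.

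Two points need repair before this is a proof. First, the dual problem is mis-signed, and the duality cancellations here are sign-sensitive. The formal adjoint of $Lu=-\divg(\bm{A}\nabla u+\bm{b}u)+cu$ is $L^{*}\phi=-\divg(\bm{A}\nabla\phi)+\bm{b}\cdot\nabla\phi+c\phi$, i.e.\ the convection term enters with a \emph{plus} sign, whereas you wrote $-\bm{b}\cdot\nabla\phi$; moreover your mixed form $\divg\bm{\chi}-\bm{\beta}\cdot\bm{\chi}+c\phi=w_h$ is inconsistent with the PDE you wrote ($-\divg\bm{\chi}$ should appear). With $L^{*}\phi=w_h$ one finds $-(w_h,\divg\bm{\chi})$ unravels, via the two error equations, to $-(w_h,c\phi)-(w_h,\bm{\beta}\cdot\bm{\chi})+\mathcal{O}(h^{2}\|u\|_{3,\Omega}\|w_h\|_{0,\Omega})$, which exactly annihilates the remaining $(w_h,\bm{\beta}\cdot\bm{\chi})+(w_h,c\phi)$; if you literally use $-\bm{b}\cdot\nabla\phi$, the $\bm{\beta}$ contributions add instead of cancel and leave a term of size $\|\bm{\beta}\|_{0,\infty,\Omega}\|\bm{\chi}\|_{0,\Omega}\|w_h\|_{0,\Omega}\sim\|w_h\|_{0,\Omega}^{2}$ that cannot be absorbed. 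Second, you should state explicitly that the $H^{2}$ regularity \cref{fullregularity} is being assumed for the \emph{adjoint} operator (standard, and implicit in \cite{Douglas1985}, but it is a separate hypothesis from regularity of the primal problem). With those corrections the bookkeeping you describe for the $\bm{\beta}$ and $c$ terms goes through and yields $\|w_h\|_{0,\Omega}^{2}\lesssim h^{2}\|u\|_{3,\Omega}\|w_h\|_{0,\Omega}+h\|w_h\|_{0,\Omega}^{2}$, the last term being absorbable for small $h$.
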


Then we prove the superconvergence for $||\divg(\Pi_{h}\bm{p}-\bm{p}_{h})||_{0,\Omega}$.
\begin{theorem}\label{superclosedivp}
\begin{equation*}
||\divg(\Pi_{h}\bm{p}-\bm{p}_{h})||_{0,\Omega}\lesssim h^{2}||u||_{3,\Omega}.
\end{equation*}
\end{theorem}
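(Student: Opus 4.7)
The plan is to reduce $\|\divg(\Pi_h\bm{p}-\bm{p}_h)\|_{0,\Omega}$ to a bound involving $\|u-u_h\|_{0,\Omega}$ and the supercloseness estimate $\|P_h u - u_h\|_{0,\Omega}\lesssim h^2\|u\|_{3,\Omega}$ from \cref{supercloseu}. The starting point is the commuting diagram \cref{com:diag}, which yields $\divg\Pi_h\bm{p}=P_h\divg\bm{p}$. Consequently $\divg(\Pi_h\bm{p}-\bm{p}_h)\in\mathcal{V}_h$, and for any $v_h\in\mathcal{V}_h$, the orthogonality of $P_h$ gives
\begin{equation*}
(\divg(\Pi_h\bm{p}-\bm{p}_h),v_h)=(P_h\divg\bm{p}-\divg\bm{p}_h,v_h)=(\divg(\bm{p}-\bm{p}_h),v_h).
\end{equation*}
Using the second equation of the error system \cref{err:eqn}, this equals $(c(u-u_h),v_h)$.

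The next step is to exploit the supercloseness of $u_h$ to $P_h u$. I would write $u-u_h=(u-P_h u)+(P_h u - u_h)$ and estimate each piece. For the $P_h u - u_h$ piece,
\begin{equation*}
|(c(P_h u - u_h),v_h)|\lesssim \|P_h u - u_h\|_{0,\Omega}\|v_h\|_{0,\Omega}\lesssim h^2\|u\|_{3,\Omega}\|v_h\|_{0,\Omega}
\end{equation*}
by \cref{supercloseu}. The more delicate piece is $(c(u-P_h u),v_h)$, because $c$ is variable and $P_h$ is only onto piecewise constants. The trick is to split $c=P_h c+(c-P_h c)$; then $P_h c\cdot v_h\in\mathcal{V}_h$, so by orthogonality of $P_h$,
\begin{equation*}
(P_h c\cdot(u-P_h u),v_h)=0,
\end{equation*}
and the remaining contribution is bounded by
\begin{equation*}
|((c-P_h c)(u-P_h u),v_h)|\lesssim\|c-P_h c\|_{0,\infty,\Omega}\|u-P_h u\|_{0,\Omega}\|v_h\|_{0,\Omega}\lesssim h^2\|u\|_{1,\Omega}\|v_h\|_{0,\Omega},
\end{equation*}
using the smoothness of $c$ and the standard $O(h)$ approximation of $P_h$.

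Finally, I would choose $v_h=\divg(\Pi_h\bm{p}-\bm{p}_h)\in\mathcal{V}_h$, combine the two estimates, and divide through by $\|\divg(\Pi_h\bm{p}-\bm{p}_h)\|_{0,\Omega}$ to conclude
\begin{equation*}
\|\divg(\Pi_h\bm{p}-\bm{p}_h)\|_{0,\Omega}\lesssim h^2\|u\|_{3,\Omega}.
\end{equation*}
The main obstacle is the variable-coefficient term $(c(u-P_h u),v_h)$: a naive Cauchy--Schwarz only gives $O(h)$ since $P_h$ has first-order accuracy onto piecewise constants. The splitting $c=P_h c+(c-P_h c)$ recovers the needed extra power of $h$ from the $L^\infty$ approximability of $c$, and is the one nontrivial idea in the argument. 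Note that the boundary data $g$ and the drift $\bm{b}$ do not enter at all, since the divergence equation alone drives the estimate.
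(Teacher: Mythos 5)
Your proof is correct and follows essentially the same route as the paper: the commuting diagram to identify $\divg\Pi_h\bm{p}$ with $P_h\divg\bm{p}$, the error equation to pass to $(c(u-u_h),v_h)$, the split through $P_hu$ with \cref{supercloseu}, and an orthogonality trick to gain the extra power of $h$ on the $(c(u-P_hu),v_h)$ term. The only cosmetic difference is that you project the coefficient, writing $c=P_hc+(c-P_hc)$, whereas the paper projects the test function, replacing $cv_h$ by $cv_h-P_h(cv_h)$; the two manipulations are equivalent uses of $u-P_hu\perp\mathcal{V}_h$.
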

\begin{proof}
From \cref{com:diag,approxRT,supercloseu,apriori,err:eqn}, it follows that for $v_{h}\in\mathcal{V}_{h}$,
\begin{equation}\label{proofsuperclosedivp}
\begin{aligned}
(\divg(\Pi_{h}\bm{p}-\bm{p}_{h}),v_{h})&=(P_{h}\divg\bm{p}-\divg\bm{p}_{h},v_{h})\\
&=(\divg(\bm{p}-\bm{p}_{h}),v_{h})\\
&=(u-P_{h}u,cv_{h})+(P_{h}u-u_{h},cv_{h})\\
&=(u-P_{h}u,cv_{h}-P_{h}(cv_{h}))+\mathcal{O}(h^{2})||u||_{3,\Omega}||v_{h}||_{0,\Omega}\\
&=\mathcal{O}(h^{2})||u||_{3,\Omega}||v_{h}||_{0,\Omega}.
\end{aligned}
\end{equation}
Therefore, \cref{superclosedivp} follows from setting $v_{h}=\divg(\Pi_{h}\bm{p}-\bm{p}_{h})$ in \cref{proofsuperclosedivp}.
\end{proof}

Before proving the superconvergence result for $||\Pi_{h}\bm{p}-\bm{p}_{h}||_{0,\Omega}$, it is necessary to
state two lemmas. The first lemma is due to Raviart and Thomas \cite{RT1977}.
\begin{lemma}\label{invdiv}
For $v_{h}\in\mathcal{V}_{h}$, there exists $\bm{q}_{h}\in\mathcal{Q}_{h}$, such that
\begin{equation*}
\divg\bm{q}_{h}=v_{h},\quad||\bm{q}_{h}||_{0,\Omega}\lesssim||v_{h}||_{0,\Omega}.
\end{equation*}
\end{lemma}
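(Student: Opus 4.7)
The plan is to construct $\bm{q}_h$ as the RT interpolant of a sufficiently regular vector field $\bm{q}$ solving the continuous problem $\divg \bm{q} = v_h$, and then invoke the commuting diagram \cref{com:diag} together with the $L^2$-stability of $\Pi_h$. This is the standard Fortin-style argument: we reduce the discrete surjectivity to the well-known continuous surjectivity of $\divg: H^1(\Omega)^2 \to L^2(\Omega)$.

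First, to avoid losing elliptic regularity at corners of the polygon $\Omega$, I would embed $\Omega$ in a bounded smooth (or convex) domain $\tilde{\Omega}$ and extend $v_h$ by zero to $\tilde{v}_h \in L^2(\tilde{\Omega})$. Solving the Dirichlet problem $-\Delta\phi = \tilde v_h$ in $\tilde\Omega$ with $\phi=0$ on $\partial\tilde\Omega$ yields, by full $H^2$-regularity on $\tilde\Omega$, a solution with $\|\phi\|_{2,\tilde\Omega} \lesssim \|\tilde v_h\|_{0,\tilde\Omega} = \|v_h\|_{0,\Omega}$. Setting $\bm{q} := -\nabla\phi|_{\Omega} \in H^1(\Omega)^2 \subset \mathcal{Q}$, we obtain $\divg\bm{q} = v_h$ in $\Omega$ and the continuous bound
\begin{equation*}
\|\bm{q}\|_{1,\Omega} \lesssim \|v_h\|_{0,\Omega}.
\end{equation*}

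Next, I would set $\bm{q}_h := \Pi_h \bm{q}$, which is well-defined since $\bm{q} \in H^1(\Omega)^2$ allows the edge moment degrees of freedom in \cref{def:pi} to be evaluated via traces. The commuting diagram \cref{com:diag} gives
\begin{equation*}
\divg \bm{q}_h = \divg \Pi_h \bm{q} = P_h \divg \bm{q} = P_h v_h = v_h,
\end{equation*}
where the last equality holds because $v_h \in \mathcal{V}_h$. For the norm bound, a triangle inequality combined with the approximation estimate \cref{approxRTa} yields
\begin{equation*}
\|\bm{q}_h\|_{0,\Omega} \le \|\bm{q}\|_{0,\Omega} + \|\bm{q} - \Pi_h \bm{q}\|_{0,\Omega} \lesssim \|\bm{q}\|_{0,\Omega} + h\|\nabla \bm{q}\|_{0,\Omega} \lesssim \|\bm{q}\|_{1,\Omega} \lesssim \|v_h\|_{0,\Omega},
\end{equation*}
which completes the proof.

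The only nontrivial point is the choice of regularization domain $\tilde\Omega$: working directly on the polygon $\Omega$ with either Dirichlet or Neumann conditions is problematic because re-entrant corners can degrade $H^2$-regularity, and Neumann BCs would additionally impose a zero-mean constraint on $v_h$ that need not hold. The extension-by-zero trick into a smooth auxiliary domain circumvents both issues cleanly; everything else is a direct consequence of the commuting diagram and the standard stability of $\Pi_h$ on $H^1$ fields.
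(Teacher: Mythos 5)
The paper does not actually prove this lemma; it is stated as a known result and attributed to Raviart and Thomas \cite{RT1977}, so there is no in-paper argument to compare against. Your proof is the standard one and it is correct: extending $v_h$ by zero into a convex or smooth auxiliary domain $\tilde\Omega$ and solving a Dirichlet problem there gives a right inverse $\bm{q}=-\nabla\phi|_{\Omega}\in H^1(\Omega)^2$ of $\divg$ with $\|\bm{q}\|_{1,\Omega}\lesssim\|v_h\|_{0,\Omega}$, the commuting diagram \cref{com:diag} turns $\Pi_h\bm{q}$ into a discrete right inverse since $P_h v_h=v_h$, and \cref{approxRTa} plus the triangle inequality gives the $L^2$ bound with a constant depending only on shape regularity and the (mesh-independent) elliptic regularity constant of $\tilde\Omega$. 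You also correctly flag the two genuine subtleties: that $H^2$ regularity can fail on the polygon itself at re-entrant corners, which the extension trick avoids, and that $\Pi_h$ needs more than $H(\divg)$ regularity to be well defined, which $\bm{q}\in H^1(\Omega)^2$ supplies. Nothing is missing.
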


Another useful lemma is the discrete Helmholtz decomposition (cf. \cite{Chen2008} and references therein).
\begin{lemma}\label{disHelmholtz}
$\mathcal{Q}_{h}$ has the following orthogonal decomposition with respect to $(\cdot,\cdot)$:
\begin{equation*}
\mathcal{Q}_{h}=\grad_{h}\mathcal{V}_{h}\oplus\overrightarrow{\nabla}\times\mathcal{S}_{h},
\end{equation*}
where $\grad_{h}: \mathcal{V}_{h}\to\mathcal{Q}_{h}^{*}$ is defined by
\begin{equation*}
(\grad_{h}v_{h},\bm{q}_{h})=-(v_{h},\divg\bm{q}_{h}),\quad\bm{q}_{h}\in\mathcal{Q}_{h}.
\end{equation*}
\end{lemma}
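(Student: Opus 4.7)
The plan is to verify the decomposition in three steps: containment of both summands in $\mathcal{Q}_{h}$, $L^{2}$-orthogonality between them, and a matching dimension count.

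First, I would check $\overrightarrow{\nabla}\times\mathcal{S}_{h}\subset\mathcal{Q}_{h}$. For $w_{h}\in\mathcal{S}_{h}$, the restriction of $\overrightarrow{\nabla}\times w_{h}$ to each triangle is a constant vector, hence lies in $\mathcal{RT}_{0}(\tau)$. By \cref{rottk} the normal trace on any edge $e$ equals the tangential derivative $\partial w_{h}/\partial\bm{t}_{e}$, which is single-valued because $w_{h}$ is continuous and piecewise linear (its trace on $e$ agrees from both sides). Thus $\overrightarrow{\nabla}\times w_{h}\in\mathcal{Q}_{h}$; the inclusion $\grad_{h}\mathcal{V}_{h}\subset\mathcal{Q}_{h}$ is built into the definition of $\grad_{h}$.

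Next, orthogonality is essentially automatic. Since $\overrightarrow{\nabla}\times w_{h}$ is elementwise constant, $\divg\overrightarrow{\nabla}\times w_{h}=0$ on each triangle, so for any $v_{h}\in\mathcal{V}_{h}$ and $w_{h}\in\mathcal{S}_{h}$,
\begin{equation*}
(\grad_{h}v_{h},\overrightarrow{\nabla}\times w_{h})=-(v_{h},\divg\overrightarrow{\nabla}\times w_{h})=0.
\end{equation*}

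The step with actual content is the dimension count, which will establish that the orthogonal sum exhausts $\mathcal{Q}_{h}$. I would first prove that $\grad_{h}$ is injective: if $\grad_{h}v_{h}=0$, then $(v_{h},\divg\bm{q}_{h})=0$ for every $\bm{q}_{h}\in\mathcal{Q}_{h}$, and surjectivity of $\divg:\mathcal{Q}_{h}\to\mathcal{V}_{h}$, which is supplied by the commuting diagram \cref{com:diag} (and quantified by \cref{invdiv}), lets me pick $\bm{q}_{h}$ with $\divg\bm{q}_{h}=v_{h}$, forcing $\|v_{h}\|_{0,\Omega}^{2}=0$. Hence $\dim\grad_{h}\mathcal{V}_{h}$ equals the number of triangles. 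Because $\Omega$ is connected, the kernel of $\overrightarrow{\nabla}\times$ on $\mathcal{S}_{h}$ is exactly the constants, so $\dim\overrightarrow{\nabla}\times\mathcal{S}_{h}$ equals the number of vertices minus one. Euler's formula for the triangulated polygon says that vertices minus edges plus triangles equals one, hence the two dimensions sum to the number of edges, which is precisely $\dim\mathcal{Q}_{h}$. Combined with orthogonality, the direct sum must equal $\mathcal{Q}_{h}$.

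I expect the dimension matching to be the only delicate step, since it tacitly relies on the topology of $\Omega$: I have used simple connectedness for clarity, but for multiply connected polygons one would subtract the first Betti number from both $\dim\overrightarrow{\nabla}\times\mathcal{S}_{h}$ (harmonic forms enlarging the curl kernel) and the right-hand side of Euler's formula, so the tally still balances.
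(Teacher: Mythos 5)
The paper offers no proof of \cref{disHelmholtz}: it is quoted from the literature (Chen, Holst and Xu \cite{Chen2008}), so there is nothing in the text to compare your argument with line by line. As a self-contained proof your argument is correct for the setting the paper actually needs, and it follows the standard route: the inclusion $\curlvect\mathcal{S}_h\subset\mathcal{Q}_h$ via $\curlvect w_h\cdot\bm{n}_e=\partial w_h/\partial\bm{t}_e$ (the same identity, from \cref{rottk,curl:grad}, that the paper uses inside the proof of \cref{mainlemma}); orthogonality from $\divg\curlvect=0$; injectivity of $\grad_h$ from the surjectivity of $\divg:\mathcal{Q}_h\to\mathcal{V}_h$ supplied by \cref{com:diag}; and the dimension count $\#T+(\#V-1)=\#E$ from Euler's formula. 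What this buys over the paper's bare citation is an explicit, elementary verification, at the cost of making visible the topological hypothesis the citation hides.

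Your closing remark about multiply connected polygons, however, is wrong as stated. For a connected domain the kernel of $\curlvect$ on $\mathcal{S}_h$ is still exactly the constants --- a continuous piecewise linear function with vanishing gradient on a connected set is constant --- so $\dim\curlvect\mathcal{S}_h=\#V-1$ does not change. What changes is Euler's formula: $\#V-\#E+\#T=1-L$ with $L$ the number of holes, so the two summands have combined dimension $\#E-L<\dim\mathcal{Q}_h$ and the two-term decomposition is genuinely false; one must adjoin an $L$-dimensional space of discrete harmonic fields (divergence-free, $L^2$-orthogonal to both summands). The tally does not ``still balance,'' and simple connectedness of $\Omega$ --- implicitly assumed by the paper and by \cite{Chen2008} --- is an honest hypothesis of the lemma, not a removable convenience.
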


The following is a result from
\cref{invdiv,disHelmholtz,lemmanolog,supercloseu,superclosedivp}.
\begin{theorem}\label{superclosep}
Let $\mathcal{T}_{h}$ be a quasi-uniform and piecewise $(\alpha,\sigma)$-grid. Then
$\forall\varepsilon>0$,
\begin{equation*}
||\Pi_{h}\bm{p}-\bm{p}_{h}||_{0,\Omega}\lesssim
h^{1+\rho}||u||_{4+\varepsilon,\Omega}.
\end{equation*}
\end{theorem}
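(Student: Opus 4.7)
The plan is to combine the discrete Helmholtz decomposition of \cref{disHelmholtz} with the scalar supercloseness \cref{supercloseu}, the divergence supercloseness \cref{superclosedivp}, and the variational error bound \cref{lemmanolog}. Set $\bm{Q}_{h} := \Pi_{h}\bm{p} - \bm{p}_{h}$ and write $\bm{Q}_{h} = \bm{q}_{1} + \bm{q}_{2}$ with $\bm{q}_{1} \in \grad_{h}\mathcal{V}_{h}$, $\bm{q}_{2} \in \curlvect\mathcal{S}_{h}$, $L^{2}$-orthogonally. Then $||\bm{Q}_{h}||_{0,\Omega}^{2} = ||\bm{q}_{1}||_{0,\Omega}^{2} + ||\bm{q}_{2}||_{0,\Omega}^{2}$, so it suffices to bound each summand by $h^{1+\rho}||u||_{4+\varepsilon,\Omega}$. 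Because every element of $\curlvect\mathcal{S}_{h}$ is divergence-free, $\divg\bm{q}_{1} = \divg\bm{Q}_{h}$, which is already of order $h^{2}||u||_{3,\Omega}$ by \cref{superclosedivp}.

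To bound $||\bm{q}_{1}||_{0,\Omega}$, represent $\bm{q}_{1} = \grad_{h}v_{1}$ and apply \cref{invdiv} to produce $\bm{r}_{h} \in \mathcal{Q}_{h}$ with $\divg\bm{r}_{h} = v_{1}$ and $||\bm{r}_{h}||_{0,\Omega} \lesssim ||v_{1}||_{0,\Omega}$. The duality $(\grad_{h}v_{1},\bm{r}_{h}) = -(v_{1},\divg\bm{r}_{h}) = -||v_{1}||_{0,\Omega}^{2}$ combined with Cauchy--Schwarz yields $||v_{1}||_{0,\Omega} \lesssim ||\bm{q}_{1}||_{0,\Omega}$; inserting this into $||\bm{q}_{1}||_{0,\Omega}^{2} = -(v_{1},\divg\bm{q}_{1}) \leq ||v_{1}||_{0,\Omega}||\divg\bm{Q}_{h}||_{0,\Omega}$ gives $||\bm{q}_{1}||_{0,\Omega} \lesssim ||\divg\bm{Q}_{h}||_{0,\Omega} \lesssim h^{2}||u||_{3,\Omega}$. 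For $||\bm{q}_{2}||_{0,\Omega}$, the orthogonality gives $||\bm{q}_{2}||_{0,\Omega}^{2} = (\bm{Q}_{h},\bm{q}_{2}) = (\Pi_{h}\bm{p}-\bm{p},\bm{q}_{2}) + (\bm{p}-\bm{p}_{h},\bm{q}_{2})$. The first summand is exactly the object handled by \cref{lemmanolog}, yielding $h^{1+\rho}||u||_{4+\varepsilon,\Omega}||\bm{q}_{2}||_{0,\Omega}$. For the second summand, rewrite via the symmetry identity $(\bm{v},\bm{w}) = (\bm{\alpha}\bm{v},\bm{A}\bm{w})$ with $\bm{A} = \bm{\alpha}^{-1}$ as $(\bm{\alpha}(\bm{p}-\bm{p}_{h}),\bm{A}\bm{q}_{2})$, replace $\bm{A}\bm{q}_{2}$ by a suitable $\mathcal{Q}_{h}$-conforming approximation, and substitute into the error equation \cref{err:eqn}. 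The divergence-free property of $\bm{q}_{2}$, the a priori estimate \cref{apriori}, \cref{supercloseu}, the $L^{2}$-orthogonality of $P_{h}$ against $\mathcal{V}_{h}$, and a Bramble--Hilbert argument exploiting the piecewise constancy of $\bm{q}_{2} = \curlvect s_{h}$ (with $s_{h}$ piecewise linear) combine to produce $|(\bm{p}-\bm{p}_{h},\bm{q}_{2})| \lesssim h^{2}||\bm{q}_{2}||_{0,\Omega}$. Since $h^{2} \leq h^{1+\rho}$ whenever $\rho \leq 1$, the theorem follows.

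The main obstacle is the weight $\bm{\alpha}$ together with the convection--reaction coefficients $\bm{\beta},c$ in the error equation, which prevent a direct application of \cref{lemmanolog} to $(\bm{p}-\bm{p}_{h},\bm{q}_{2})$. The matrix field $\bm{A}\bm{q}_{2}$ that naturally pairs with $\bm{\alpha}(\bm{p}-\bm{p}_{h})$ has tangential jumps across interior edges and does not lie in $\mathcal{Q}_{h}$, so a careful conforming approximation---rather than a direct test---is required so that the error equation can deliver the scalar supercloseness of $u_{h}$ and $\divg\bm{p}_{h}$ into a bound of order $h^{2}$ on the $(\bm{p}-\bm{p}_{h},\bm{q}_{2})$ term.
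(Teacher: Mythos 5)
Your proposal is correct and follows essentially the same route as the paper: the same discrete Helmholtz decomposition, the same duality argument via \cref{invdiv} and \cref{superclosedivp} for the gradient part, and the same splitting of the curl part into the interpolation term handled by \cref{lemmanolog} plus the term $(\bm{\alpha}(\bm{p}-\bm{p}_{h}),\bm{A}\bm{q}_{2})$, which is treated by inserting $\Pi_{h}\bm{A}\bm{q}_{2}$ into the error equation and invoking \cref{supercloseu,superclosedivp,apriori}. The only difference is cosmetic bookkeeping of where the $\bm{\alpha}$-weight is introduced.
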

\begin{proof}
Let $\bm{\xi}_{h}=\Pi_{h}\bm{p}-\bm{p}_{h}$. \cref{disHelmholtz} gives
\begin{equation}\label{disHelmholtzdecomp}
\bm{\xi}_{h}=\grad_{h}v_{h}\oplus\curlvect w_{h},
\end{equation}
where $(v_{h},w_{h})\in\mathcal{V}_{h}\times\mathcal{S}_{h}$, and
\begin{subequations}\label{bdvw}
\begin{align}
&||\grad_{h}v_{h}||_{0,\Omega}\lesssim||\bm{\xi}_{h}||_{0,\Omega},\\
&||w_{h}||_{0,\Omega}\lesssim||\curlvect w_{h}||_{0,\Omega}\lesssim||\bm{\xi}_{h}||_{0,\Omega}.
\end{align}
\end{subequations}
Let $\tilde{\bm{q}}_{h}\in\mathcal{Q}_{h}$ be the preimage of $v_{h}$ under $\divg$ in \cref{invdiv}. Then
\begin{equation*}
\begin{aligned}
||v_{h}||_{0,\Omega}^{2}&=-(\grad_{h}v_{h},\tilde{\bm{q}}_{h})\\
&\lesssim||\grad_{h}v_{h}||_{0,\Omega}||v_{h}||_{0,\Omega},
\end{aligned}
\end{equation*}
and thus
\begin{equation}\label{disPoin}
||v_{h}||_{0,\Omega}\lesssim||\grad_{h}v_{h}||_{0,\Omega}.
\end{equation}
By \cref{disPoin,disHelmholtzdecomp}, we have
\begin{equation*}
\begin{aligned}
||\grad_{h}v_{h}||_{0,\Omega}^{2}&=-(v_{h},\divg\grad_{h}v_{h})\\
&=-(v_{h},\divg\bm{\xi}_{h})\\
&\lesssim||\grad_{h}v_{h}||_{0,\Omega}||\divg\bm{\xi}_{h}||_{0,\Omega}.
\end{aligned}
\end{equation*}
Then it follows from \cref{superclosedivp} that
\begin{equation}\label{bdgradv}
||\grad_{h}v_{h}||_{0,\Omega}\lesssim h^{2}||u||_{3,\Omega}.
\end{equation}
It remains to bound $\curlvect{w_{h}}$. By the orthogonality in \cref{disHelmholtzdecomp},
\begin{equation}\label{curlwtotal}
\begin{aligned}
||\curlvect{w_{h}}||_{0,\Omega}^{2}
&=(\Pi_{h}\bm{p}-\bm{p}_{h},\curlvect w_{h})\\
&=-(\bm{p}-\Pi_{h}\bm{p},\curlvect w_{h})\\
&+(\bm{\alpha}(\bm{p}-\bm{p}_{h}),\bm{A}\curlvect w_{h}-\Pi_{h}\bm{A}\curlvect w_{h})\\
&+(\bm{\alpha}(\bm{p}-\bm{p}_{h}),\Pi_{h}\bm{A}\curlvect w_{h})\\
&=\Rom{1}+\Rom{2}+\Rom{3}.
\end{aligned}
\end{equation}
$\Rom{1}$ is estimated by \cref{lemmanolog}
\begin{equation}\label{bd1curlw}
|\Rom{1}|\lesssim h^{1+\rho}||\nabla\bm{p}||_{2+\varepsilon,\Omega}||\curlvect w_{h}||_{0,\Omega}.
\end{equation}
$\Rom{2}$ is estimated by \cref{approxRT,apriori}
\begin{equation}\label{bd2curlw}
|\Rom{2}|\lesssim h^{2}||u||_{2,\Omega}||\curlvect w_{h}||_{0,\Omega}.
\end{equation}
As for $\Rom{3}$, setting $\bm{q}_{h}=\Pi_{h}\bm{A}\curlvect w_{h}$ in \cref{err:eqn} leads to
\begin{equation*}
\begin{aligned}
\Rom{3}&=(\bm{q}_{h},\bm{\beta}(u-u_{h}))-(\divg\bm{q}_{h},u-u_{h})\\
&=\Rom{3}_{1}+\Rom{3}_{2}.\\
\end{aligned}
\end{equation*}
By \cref{approxRTa,com:diag,formRT}, we have
\begin{equation}\label{stableinterp}
||\bm{q}_{h}||_{0,\Omega}\lesssim||\curlvect w_{h}||_{0,\Omega},
\end{equation}
and
\begin{equation}\label{stableinterp2}
||\nabla_{h}\bm{q}_{h}||_{0,\Omega}\lesssim||\divg\bm{q}_{h}||_{0,\Omega}\lesssim||\curlvect w_{h}||_{0,\Omega}.
\end{equation}
Then $\Rom{3}_{1}$ can be estimated by \cref{apriori,supercloseu,stableinterp,stableinterp2}:
\begin{equation}\label{bd31curlw}
\begin{aligned}
\Rom{3}_{1}&=(\bm{\beta}\cdot\bm{q}_{h},u-P_{h}u+P_{h}u-u_{h})\\
&=(\bm{\beta}\cdot\bm{q}_{h}-P_{h}\bm{\beta}\cdot\bm{q}_{h},u-P_{h}u)+\mathcal{O}(h^{2})
||u||_{3,\Omega}||\bm{q}_{h}||_{0,\Omega}\\
&=\mathcal{O}(h^{2})||\nabla_{h}(\bm{\beta}\cdot\bm{q}_{h})||_{0,\Omega}||u||_{1,\Omega}
+\mathcal{O}(h^{2})||u||_{3,\Omega}||\bm{q}_{h}||_{0,\Omega}\\
&=\mathcal{O}(h^{2})||u||_{3,\Omega}||\curlvect{w_{h}}||_{0,\Omega}.
\end{aligned}
\end{equation}
$\Rom{3}_{2}$ can be estimated by \cref{superclosedivp,stableinterp2}:
\begin{equation}\label{bd32curlw}
\begin{aligned}
\Rom{3}_{2}&=(\divg\bm{q}_{h},P_{h}u-u_{h})\\
&=\mathcal{O}(h^{2})||u||_{3,\Omega}||\curlvect w_{h}||_{0,\Omega}.
\end{aligned}
\end{equation}
Combining \cref{curlwtotal,bd1curlw,bd2curlw,bd31curlw,bd32curlw}, we obtain
\begin{equation}\label{bdcurlw}
||\curlvect{w_{h}}||_{0,\Omega}\lesssim h^{1+\rho}||u||_{4+\varepsilon,\Omega}.
\end{equation}
Then \cref{superclosep} follows from \cref{disHelmholtzdecomp,bdgradv,bdcurlw}.
\end{proof}

\begin{remark}\label{bc=0}
If $\bm{b}=\bm{0}, c=0$ in \cref{mix:c1}, then $\divg(\Pi_{h}\bm{p}-\bm{p}_{h})=0$, which implies that
$\Pi_{h}\bm{p}-\bm{p}_{h}$ is a piecewise constant function. Then the superconvergence analysis
in this section simplifies. In particular, to prove \cref{superclosep}, it is not necessary to employ \cref{disHelmholtz,supercloseu,superclosedivp} 
in this simplified case.
\end{remark}

\cref{superclosep} can be easily generalized by \cref{mainlemma,lemmaNeumann,lemmahgamma} respectively. 
Here we present a theorem from \cref{lemmaNeumann}.
\begin{theorem}\label{superclosepNeumann}
Let $\mathcal{T}_{h}$ be an $(\alpha,\sigma)$-grid without assuming adjacent boundary elements
form approximate parallelograms in \cref{alphasigma}. Then under the Neumann boundary condition
$\bm{p}\cdot\bm{n}=g$ on $\partial\Omega$,
\begin{equation*}
||\Pi_{h}\bm{p}-\bm{p}_{h}||_{0,\Omega}\lesssim
h^{1+\rho}(||\nabla\bm{p}||_{0,\infty,\Omega}+||u||_{3,\Omega}).
\end{equation*}
\end{theorem}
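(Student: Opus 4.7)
The strategy is to follow the architecture of \cref{superclosep}, but to substitute \cref{lemmaNeumann} for \cref{lemmanolog} at the step that would otherwise require the boundary $(\alpha,\sigma)$-structure and the discrete Sobolev inequality. The structural feature making this substitution available is that under the Neumann boundary condition $\bm{p}\cdot\bm{n}=g$, both $\bm{p}_{h}\cdot\bm{n}$ and $\Pi_{h}\bm{p}\cdot\bm{n}$ coincide with the piecewise-constant $L^{2}$-projection of $g$ onto each boundary edge, and hence $\bm{\xi}_{h}\cdot\bm{n}=(\Pi_{h}\bm{p}-\bm{p}_{h})\cdot\bm{n}=0$ on $\partial\Omega$.

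I would then apply \cref{disHelmholtz} to decompose $\bm{\xi}_{h}=\grad_{h}v_{h}\oplus\curlvect w_{h}$, choosing the stream function $w_{h}$ from the subspace of continuous piecewise linear polynomials vanishing on $\partial\Omega$ (for simply connected $\Omega$; a multiply connected $\Omega$ requires one additional constant per boundary component). Then $\curlvect w_{h}\cdot\bm{n}=\partial w_{h}/\partial\bm{t}_{e}=0$ on every boundary edge, and consequently $\grad_{h}v_{h}\cdot\bm{n}=0$ on $\partial\Omega$ as well. The bound $||\grad_{h}v_{h}||_{0,\Omega}\lesssim h^{2}||u||_{3,\Omega}$ is obtained exactly as in \cref{superclosep} via the superconvergence of the divergence \cref{superclosedivp}. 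For the curl component I reuse the identity \cref{curlwtotal}; because $\curlvect w_{h}\cdot\bm{n}=0$, \cref{lemmaNeumann} now bounds $|(\bm{p}-\Pi_{h}\bm{p},\curlvect w_{h})|$ by $h^{1+\rho}(||\nabla\bm{p}||_{0,\infty,\Omega}+||\bm{p}||_{2,\Omega})||\curlvect w_{h}||_{0,\Omega}$, which is the source of the right-hand side in the theorem after the standard estimate $||\bm{p}||_{2,\Omega}\lesssim||u||_{3,\Omega}$. The remaining two pieces in the splitting are handled verbatim from \cref{superclosep} using \cref{approxRT,supercloseu,superclosedivp}.

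The main obstacle is that the test function $\bm{q}_{h}=\Pi_{h}\bm{A}\curlvect w_{h}$, needed to rewrite the third term $(\bm{\alpha}(\bm{p}-\bm{p}_{h}),\Pi_{h}\bm{A}\curlvect w_{h})$ through the Neumann error equation, is not automatically admissible: although $\curlvect w_{h}\cdot\bm{n}=0$, multiplication by the non-scalar matrix $\bm{A}$ can create a nonzero normal component of $\Pi_{h}\bm{A}\curlvect w_{h}$ on $\partial\Omega$. I would resolve this by writing $\Pi_{h}\bm{A}\curlvect w_{h}=\bm{r}_{h}+\bm{s}_{h}$, where $\bm{r}_{h}\in\mathcal{Q}_{h}$ has zero normal trace on $\partial\Omega$ and $\bm{s}_{h}$ is a boundary-layer correction whose boundary-edge degrees of freedom equal those of $\Pi_{h}\bm{A}\curlvect w_{h}$; the contribution of $\bm{r}_{h}$ is processed via the error equation as in \cref{superclosep}, while the correction $(\bm{\alpha}(\bm{p}-\bm{p}_{h}),\bm{s}_{h})$ is estimated directly from the trace inequality \cref{trace} and the a priori bound \cref{apriori}, yielding only an $\mathcal{O}(h^{1+\rho})||\nabla\bm{p}||_{0,\infty,\Omega}||\curlvect w_{h}||_{0,\Omega}$ term. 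This last step is precisely why the $L^{\infty}$ norm of $\nabla\bm{p}$ appears on the right-hand side of the theorem, rather than a higher Sobolev norm of $u$.
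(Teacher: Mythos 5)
Your overall route is exactly the paper's: verify that $\bm{\xi}_{h}=\Pi_{h}\bm{p}-\bm{p}_{h}\in\mathcal{Q}_{0h}$ (the paper does this via \cref{NeumannI,NeumannII}, i.e.\ the edge averages of $\Pi_h\bm{p}\cdot\bm{n}$ and $\bm{p}_h\cdot\bm{n}$ both equal $g_h$ and the normal component of an $\mathcal{RT}_0$ function is constant along each edge), invoke the discrete Helmholtz decomposition for the Neumann condition with $\grad_h v_h\in\mathcal{Q}_{0h}$ and $w_h|_{\partial\Omega}=0$, bound $\grad_h v_h$ through \cref{superclosedivp}, and bound $\curlvect w_h$ by rerunning the splitting \cref{curlwtotal} with \cref{lemmaNeumann} in place of \cref{lemmanolog}. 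The paper's proof is essentially this two-line reduction, so on the main line you and the paper agree. One small misattribution: the $\|\nabla\bm{p}\|_{0,\infty,\Omega}$ on the right-hand side already comes from \cref{lemmaNeumann} itself (the $\mathcal{E}_2$ contribution in the proof of \cref{mainlemma}), not from any boundary correction you introduce.

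The place where you go beyond the paper is the admissibility of $\bm{q}_h=\Pi_h\bm{A}\curlvect w_h$ as a test function in the Neumann error equation. This is a legitimate observation: for matrix-valued $\bm{A}$ one has $\bm{A}\curlvect w_h\cdot\bm{n}_e=\curlvect w_h\cdot\bm{A}\bm{n}_e$, which need not vanish on boundary edges even though $\curlvect w_h\cdot\bm{n}_e=\partial w_h/\partial\bm{t}_e=0$ there, so $\Pi_h\bm{A}\curlvect w_h\notin\mathcal{Q}_{0h}$ in general; the paper is silent on this point (note the issue disappears for scalar $\bm{A}$). However, your proposed repair does not close at the claimed order. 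The boundary-layer correction $\bm{s}_h$ carries the boundary-edge degrees of freedom $\int_e\bm{A}\curlvect w_h\cdot\bm{n}_e=\mathcal{O}(h|\curlvect w_h|)$, and since the associated RT basis functions have unit $L^2(\tau)$ norm up to constants, $\|\bm{s}_h\|_{0,\Omega}\sim\|\curlvect w_h\|_{0,\Omega_h}$ is \emph{not} small. A direct Cauchy--Schwarz with \cref{apriori} then gives only $|(\bm{\alpha}(\bm{p}-\bm{p}_h),\bm{s}_h)|\lesssim h\|u\|_{2,\Omega}\|\curlvect w_h\|_{0,\Omega}$, which destroys the superconvergence; splitting $\bm{p}-\bm{p}_h=(\bm{p}-\Pi_h\bm{p})+\bm{\xi}_h$ improves the first piece to $\mathcal{O}(h^{3/2})\|\nabla\bm{p}\|_{0,\infty,\Omega}\|\curlvect w_h\|_{0,\Omega}$ via the measure of the boundary strip, but leaves a term $\|\bm{\xi}_h\|_{0,\Omega_h}\|\curlvect w_h\|_{0,\Omega_h}$ that cannot be absorbed into $\|\curlvect w_h\|_{0,\Omega}^2$ because $\|\bm{\xi}_h\|_{0,\Omega}$ dominates $\|\curlvect w_h\|_{0,\Omega}$. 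So as written, the step ``estimated directly from the trace inequality and the a priori bound, yielding only an $\mathcal{O}(h^{1+\rho})$ term'' is asserted rather than proved, and a routine estimate does not deliver it. You would need either a genuinely local (boundary-strip) superconvergence bound for $\bm{\xi}_h$, or a different construction of an admissible test function whose distance to $\bm{A}\curlvect w_h$ is $o(h)\|\curlvect w_h\|_{0,\Omega}$, to make this correction work.
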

\begin{proof}
The proof is basically the same as \cref{superclosep}. The difference is from function spaces. 
For the Neumann boundary condition, the test function spaces in \cref{mix:va,mix:dva} are
$\mathcal{Q}_{0}=\{\bm{q}\in\mathcal{Q}: \bm{q}\cdot\bm{n}=0\}$ and
$\mathcal{Q}_{0h}=\{\bm{q}\in\mathcal{Q}_{h}: \bm{q}_{h}\cdot\bm{n}=0\}$, respectively.
The numerical solution $\bm{p}_{h}$ is in $\mathcal{Q}_{h}$ with the constraint $\bm{p}_{h}\cdot\bm{n}=g_{h}$, where
\begin{equation*}
g_{h}|_{e}=\frac{1}{\ell_{e}}\int_{e}g,\quad e\subset\partial\Omega.
\end{equation*}
Thus
\begin{equation}\label{NeumannI}
\int_{e}(\Pi_{h}\bm{p}-\bm{p}_{h})\cdot\bm{n}_{e}=0.
\end{equation}
The form of $\mathcal{RT}_{0}(\tau)$ \cref{formRT} implies
\begin{equation}\label{NeumannII}
\left(\dfte{\Pi_{h}\bm{p}}-\dfte{\bm{p}_{h}}\right)\cdot\bm{n}_{e}=0.
\end{equation}
Combining \cref{NeumannI,NeumannII}, we have $\bm{\xi}_{h}=\Pi_{h}\bm{p}-\bm{p}_{h}\in\mathcal{Q}_{0h}$.
By the discrete Helmholtz decomposition for the Neumann boundary condition (cf. \cite{Brandts2000}), we have
\begin{equation}\label{NeumDisHelm}
\bm{\xi}_{h}=\grad_{h}v_{h}\oplus\curlvect w_{h},
\end{equation}
where $(v_{h},w_{h})\in\mathcal{V}_{h}\times\mathcal{S}_{h},$ and $\grad_{h}v_{h}\in\mathcal{Q}_{0h}, w_{h}|_{\partial\Omega}=0$.
Then by following the proof of \cref{superclosep} and using \cref{lemmaNeumann} instead of \cref{lemmanolog}, we prove \cref{superclosepNeumann}.
\end{proof}

\section{Postprocessing operator and connection with CR nonconforming elements}\label{sec:post}
In the gradient recovery framework, once supercloseness
between the interpolant $\Pi_{h}\bm{p}$ and numerical solution $\bm{p}_{h}$ are available,
one can construct postprocessing operator $G_{h}$ to achieve superconvergence of $G_{h}\bm{p}_{h}$ to $\bm{p}$.
Of course the construction and analysis of $G_{h}$ is of independent interest (cf. \cite{ZZ1990,BaXu2003II,Xu2003,BaXuZheng2007}).
In this section, we first discuss a cheap recovery operator $G_{h}$ proposed in \cite{Brandts1994} and use it to achieve the superconvergence
\cref{superpost}. Then we prove the superconvergence estimate \cref{superpostCR} for CR nonconforming elements.

\subsection{Postprocessing operator}\label{subsec:Gh}
To define $G_{h}$, we introduce the nonconforming finite element space:
\begin{equation*}
\begin{aligned}
\mathcal{V}_{h}^{CR} := \{v:\ &v|_{\tau} \text{ is linear on } \tau\in\mathcal{T}_{h}, v
 \text{ is continuous}\\
&\text{at the midpoints of interior edges of } \mathcal{T}_{h}\}.
\end{aligned}
\end{equation*}

\begin{figure}[tbhp]
\centering
\includegraphics[width=10.0cm,height=5.0cm]{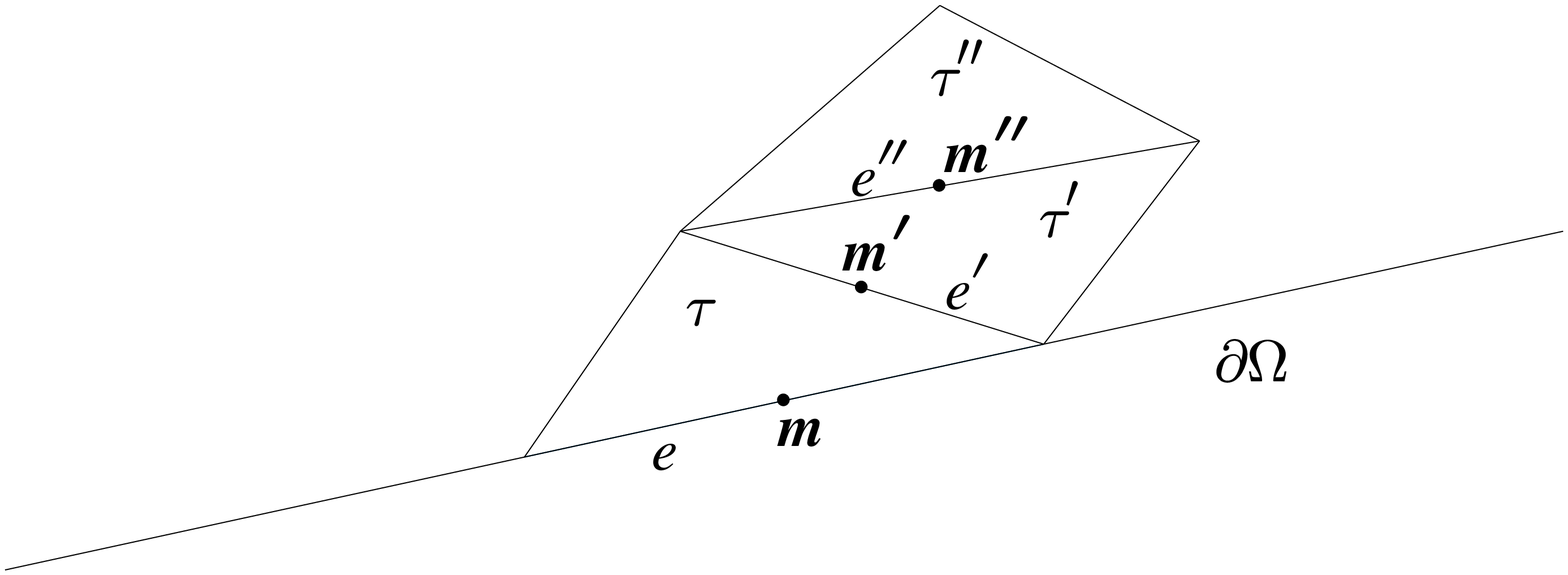}
\caption{a local patch $\omega$ near the boundary}
\label{bdygh}
\end{figure}

\begin{definition}\label{patch1}
The operator $G_{h}: \mathcal{Q}_{h}\rightarrow\mathcal{V}_{h}^{CR}\times\mathcal{V}_{h}^{CR}$ is defined as follows:
\begin{enumerate}
\item For each interior edge e, let $\tau$ and $\tau^{\prime}$ be the pair of elements sharing e. Then the value of
$G_{h}\bm{q}_{h}$ at the midpoint $\bm{m}$ of $e$ is
\begin{equation*}
G_{h}\bm{q}_{h}(\bm{m})=\frac{1}{2}(\bm{q}_{h}|_{\tau}(\bm{m})+\bm{q}_{h}|_{\tau^{\prime}}(\bm{m})).
\end{equation*}
\item For each boundary edge e$\subset\partial\Omega$, let $\tau$ be the element having e
as an edge. Let $\tau^{\prime}$ be an element sharing an edge $e^{\prime}$ with $\tau$.
Let $e^{\prime\prime}$ denote the edge of $\tau^{\prime}$ that does not meet with
$e$, $\tau^{\prime\prime}$ the element sharing $e^{\prime\prime}$ with $\tau^{\prime}$.
Then the value of $G_{h}\bm{q}_{h}$ at the midpoint $\bm{m}$ of $e$ is
\begin{equation*}
G_{h}\bm{q}_{h}(\bm{m})=2G_{h}\bm{q}_{h}(\bm{m}^{\prime})-G_{h}\bm{q}_{h}(\bm{m}^{\prime\prime}),
\end{equation*}
where $\bm{m}^{\prime}$ and $\bm{m}^{\prime\prime}$ is the midpoint of $e^{\prime}$ and $e^{\prime\prime}$, 
respectively, see \cref{bdygh}.
\end{enumerate}
\end{definition}

We have the following lemma.
\begin{lemma}\label{patchestimate}
Let $\omega$ be the patch $\tau\cup\tau^{\prime}$ associated with the interior edge or
$\tau\cup\tau^{\prime}\cup\tau^{\prime\prime}$ associated with the boundary edge having the midpoint $\bm{m}$ in \cref{patch1}.
Assume each pair of adjacent elements in $\omega$ forms an $\mathcal{O}(h^{1+\alpha})$ approximate parallelogram. Then we have
\begin{equation*}
|(\bm{q}_{L}-G_{h}\Pi_{h}\bm{q}_{L})(\bm{m})|\lesssim h^{\alpha}||\nabla\bm{q}_{L}||_{0,\omega},
\end{equation*}
where $\bm{q}_{L}\in\mathcal{P}_{1}(\omega)^{2}$.
\end{lemma}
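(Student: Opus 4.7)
The plan is to view the estimate as a perturbation statement around the exact parallelogram case, in which $G_h\Pi_h$ reproduces affine functions at the midpoint $\bm{m}$ exactly. Since on the finite-dimensional space of affine functions $\|\nabla\bm{q}_L\|_{0,\omega}\simeq h\|\nabla\bm{q}_L\|_{0,\infty,\omega}$, the target bound is equivalent to
\begin{equation*}
|(\bm{q}_L - G_h\Pi_h\bm{q}_L)(\bm{m})|\lesssim h^{1+\alpha}\|\nabla\bm{q}_L\|_{0,\infty,\omega},
\end{equation*}
so the whole game is to gain a factor $h^{\alpha}$ over the trivial pointwise bound $\|\bm{q}_L-\Pi_h\bm{q}_L\|_{0,\infty}\lesssim h\|\nabla\bm{q}_L\|_{0,\infty}$ through cancellation driven by the $\mathcal{O}(h^{1+\alpha})$ approximate parallelogram hypothesis.

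For the interior edge case, I would first write, by linearity of the midpoint averaging,
\begin{equation*}
(\bm{q}_L - G_h\Pi_h\bm{q}_L)(\bm{m}) = \tfrac{1}{2}\bigl((\bm{q}_L - \Pi_h\bm{q}_L)|_\tau(\bm{m}) + (\bm{q}_L - \Pi_h\bm{q}_L)|_{\tau'}(\bm{m})\bigr).
\end{equation*}
Each summand can then be expanded via \cref{er:exp}: expressing $\bm{q}_L-\Pi_h\bm{q}_L=\sum_k\alpha_k\psi_k$ and evaluating $\psi_k$ at the midpoint of the shared edge, the identities $\lambda_k(\bm{m})=0$ and $\lambda_{k\pm1}(\bm{m})=1/2$ (when $e=e_k$) collapse the expansion into a closed form involving only the edge lengths $\ell_j$, the tangents $\bm{t}_j/d_j$, and the derivatives $\bm{n}_j\cdot\partial_{\bm{t}_j}\bm{q}_L$ on each triangle. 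Next I would pair corresponding terms from $\tau$ and $\tau'$: the contribution from the shared edge has tangents and normals that flip sign across it, while for each pair of corresponding non-shared edges the approximate parallelogram hypothesis forces $\ell_j^\tau-\ell_j^{\tau'}=\mathcal{O}(h^{1+\alpha})$, $\bm{t}_j^\tau+\bm{t}_j^{\tau'}=\mathcal{O}(h^\alpha)$, $d_j^\tau-d_j^{\tau'}=\mathcal{O}(h^{1+\alpha})$. Substituting these near-equalities into the closed form, the $\mathcal{O}(h)$ pieces cancel pairwise and the residual is of order $h^{1+\alpha}\|\nabla\bm{q}_L\|_{0,\infty,\omega}$.

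For the boundary edge case I would apply the just-proved interior estimate to the patches around $\bm{m}'$ (which sits on the interface of $\tau$ and $\tau'$) and around $\bm{m}''$ (on the interface of $\tau'$ and $\tau''$), giving $|G_h\Pi_h\bm{q}_L(\bm{m}^{*})-\bm{q}_L(\bm{m}^{*})|\lesssim h^{\alpha}\|\nabla\bm{q}_L\|_{0,\omega}$ for $\bm{m}^{*}\in\{\bm{m}',\bm{m}''\}$. Plugging into the extrapolation formula from \cref{patch1},
\begin{equation*}
G_h\Pi_h\bm{q}_L(\bm{m}) = 2\bm{q}_L(\bm{m}') - \bm{q}_L(\bm{m}'') + \mathcal{O}(h^{\alpha})\|\nabla\bm{q}_L\|_{0,\omega},
\end{equation*}
and affine linearity gives $2\bm{q}_L(\bm{m}')-\bm{q}_L(\bm{m}'')=\bm{q}_L(2\bm{m}'-\bm{m}'')$. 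A short geometric calculation, expressing $\bm{m},\bm{m}',\bm{m}''$ as averages of vertices and applying the approximate parallelogram conditions on the pairs $(\tau,\tau')$ and $(\tau',\tau'')$, yields $|2\bm{m}'-\bm{m}''-\bm{m}|=\mathcal{O}(h^{1+\alpha})$; combined with $|\nabla\bm{q}_L|$ being constant, this controls $|\bm{q}_L(2\bm{m}'-\bm{m}'')-\bm{q}_L(\bm{m})|$ by $h^{1+\alpha}\|\nabla\bm{q}_L\|_{0,\infty,\omega}$, completing the boundary case.

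The main obstacle is the algebraic bookkeeping in the interior-edge cancellation. The closed-form expansion of $(\bm{q}_L-\Pi_h\bm{q}_L)(\bm{m})$ mixes contributions from all three edges of each triangle with non-trivial trigonometric coefficients, and it is not a priori transparent which pairings across $\tau$ and $\tau'$ are the ones that cancel. Ensuring that every surviving term is truly of order $h^{1+\alpha}$—and not merely $h$—requires simultaneously using the edge-length match and the tangent-direction match to the full $\mathcal{O}(h^{1+\alpha})$/$\mathcal{O}(h^\alpha)$ precision afforded by the approximate parallelogram hypothesis, so care is needed not to lose precision when substituting quantities on $\tau'$ by their $\tau$-counterparts.
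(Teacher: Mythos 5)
Your proposal is correct and follows essentially the same strategy as the paper's proof: for an interior midpoint, exploit the pairwise cancellation between the contributions of $\tau$ and $\tau^{\prime}$ afforded by the $\mathcal{O}(h^{1+\alpha})$ approximate parallelogram hypothesis, and for a boundary midpoint, combine the interior estimate at $\bm{m}^{\prime},\bm{m}^{\prime\prime}$ with the affine extrapolation identity and the geometric fact $|2\bm{m}^{\prime}-\bm{m}^{\prime\prime}-\bm{m}|=\mathcal{O}(h^{1+\alpha})$, exactly as in the paper. The only (cosmetic) difference is in the interior computation: the paper normalizes $\bm{q}_{L}(\bm{m})=0$ and evaluates $G_{h}\Pi_{h}\bm{q}_{L}(\bm{m})$ directly from the Cartesian form \cref{Cartesian} of the RT basis together with the exactness of the midpoint rule on each edge, which makes the sign flips $\bm{a}_{k}^{\prime}-\bm{m}\approx-(\bm{a}_{k}-\bm{m})$, $\bm{n}_{k}^{\prime}=-\bm{n}_{k}$, $\bm{m}_{k}^{\prime}-\bm{m}\approx-(\bm{m}_{k}-\bm{m})$ and hence the cancellation more transparent than the $\psi_{k}$-expansion of \cref{er:exp} that you propose.
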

\begin{proof}
First consider the case of interior edges. By the fact that $\Pi_{h}\bm{q}_{C}=\bm{q}_{C}$ for
$\bm{q}_{C}\in\mathcal{P}_{0}(\omega)$, we can assume $\bm{q}_{L}(\bm{m})=0$ without loss of generality.
Let $\bm{m}_{k}$ be the midpoint of $e_{k}$. Then by \cref{Cartesian},
\begin{equation}
\begin{aligned}
&(G_{h}\Pi_{h}\bm{q}_{L}-\bm{q}_{L})(\bm{m})\\
&=\frac{1}{2}\sum_{k=1}^{3}\left(
\frac{\bm{m}-\bm{a}_{k}}{2|\tau|}\int_{e_{k}}\bm{q}_{L}\cdot\bm{n}_{k}
+\frac{\bm{m}-\bm{a}_{k}^{\prime}}{2|\tau^{\prime}|}\int_{e_{k}^{\prime}}\bm{q}_{L}\cdot\bm{n}_{k}^{\prime}\right)\\
&=\frac{1}{2}\sum_{k=1}^{3}\left(\frac{\bm{m}-\bm{a}_{k}}{2|\tau|}\ell_{k}\bm{q}_{L}(\bm{m}_{k})\cdot\bm{n}_{k}
+\frac{\bm{m}-\bm{a}_{k}^{\prime}}{2|\tau^{\prime}|}\ell_{k}^{\prime}\bm{q}_{L}(\bm{m}_{k}^{\prime})\cdot\bm{n}_{k}^{\prime}\right),
\end{aligned}
\end{equation}
where $\bm{m}_{k}$ is the midpoint of $e_{k}$. From
\begin{equation*}
\bm{q}_{L}(\bm{m}_{k})=\nabla\bm{q}_{L}(\bm{m})(\bm{m}_{k}-\bm{m}),\quad
\bm{q}_{L}(\bm{m}_{k}^{\prime})=\nabla\bm{q}_{L}(\bm{m})(\bm{m}_{k}^{\prime}-\bm{m}),
\end{equation*}
and the $\mathcal{O}(h^{1+\alpha})$ approximate parallelogram condition, it follows that
\begin{equation}\label{interior}
|(G_{h}\Pi_{h}\bm{q}_{L}-\bm{q}_{L})(\bm{m})|\lesssim h^{\alpha}||\nabla\bm{q}_{L}||_{0,\omega}.
\end{equation}
As for the case of boundary edge, again assume $\bm{q}_{L}(\bm{m}^{\prime})=0$ without loss of generality.
Then by 
\begin{equation*}
\bm{q}_{L}(\bm{m})=\nabla\bm{q}_{L}(\bm{m}^{\prime})(\bm{m}-\bm{m}^{\prime}),\quad
\bm{q}_{L}(\bm{m}^{\prime\prime})=\nabla\bm{q}_{L}(\bm{m}^{\prime})(\bm{m}^{\prime\prime}-\bm{m}^{\prime}),
\end{equation*}
the $\mathcal{O}(h^{1+\alpha})$ approximate parallelogram condition and \cref{interior}, we have
\begin{equation*}
\begin{aligned}
\left|(\bm{q}_{L}-G_{h}\Pi_{h}\bm{q}_{L})(\bm{m})\right|&=|\bm{q}_{L}(\bm{m})+G_{h}\Pi_{h}\bm{q}_{L}(\bm{m}^{\prime\prime})
-2G_{h}\Pi_{h}\bm{q}_{L}(\bm{m}^{\prime})|\\
&\leq|\bm{q}_{L}(\bm{m})+\bm{q}_{L}(\bm{m}^{\prime\prime})|
+|G_{h}\Pi_{h}\bm{q}_{L}(\bm{m}^{\prime\prime})-\bm{q}_{L}(\bm{m}^{\prime\prime})|\\
&+2|G_{h}\Pi_{h}\bm{q}_{L}(\bm{m}^{\prime})-\mathbf{q}_{L}(\bm{m}^{\prime})|
\lesssim h^{\alpha}||\nabla\bm{q}_{L}||_{0,\omega}.
\end{aligned}
\end{equation*}
\end{proof}
\begin{theorem}\label{mainthminterp}
Assume the triangulation $\mathcal{T}_{h}$ satisfies the $(\alpha,\sigma)$-condition. Then
\begin{equation*}
||\bm{q}-G_{h}\Pi_{h}\bm{q}||_{0,\Omega}\lesssim h^{1+\rho}
(||\nabla\bm{q}||_{1,\Omega}+|\bm{q}|_{1,\infty,\Omega}).
\end{equation*}
\end{theorem}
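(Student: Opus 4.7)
The plan is to estimate $||\bm{q} - G_h\Pi_h\bm{q}||_{0,\tau}$ element by element by passing through a local linear approximation of $\bm{q}$, and then to classify each triangle as ``good'' or ``bad'' according to whether the three midpoint patches on its edges satisfy the approximate parallelogram hypothesis in \cref{patchestimate}.

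For a fixed $\tau$, let $\omega_\tau$ denote the union of elements whose values enter $G_h\Pi_h\bm{q}|_\tau$; this is a bounded-depth neighborhood of $\tau$ by \cref{patch1}. First I would choose $\bm{q}_L\in\mathcal{P}_1(\omega_\tau)^2$ satisfying the usual Bramble--Hilbert bounds $||\bm{q}-\bm{q}_L||_{s,\omega_\tau}\lesssim h^{2-s}|\bm{q}|_{2,\omega_\tau}$ for $s=0,1$, and decompose
\begin{equation*}
\bm{q}-G_h\Pi_h\bm{q} = (\bm{q}-\bm{q}_L) - G_h\Pi_h(\bm{q}-\bm{q}_L) + (\bm{q}_L - G_h\Pi_h\bm{q}_L).
\end{equation*}
The first term is immediately $\lesssim h^2|\bm{q}|_{2,\omega_\tau}$. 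For the second term, at each edge midpoint $\bm{m}_k$ of $\tau$ I would use the formula $\Pi_h\bm{s} = \sum_j \mathcal{N}_j(\bm{s})\phi_j$ together with $|\phi_j(\bm{m}_k)|\lesssim h^{-1}$ (from \cref{Cartesian}) and the trace inequality \cref{trace} to obtain $|\Pi_h(\bm{q}-\bm{q}_L)(\bm{m}_k)|\lesssim h|\bm{q}|_{2,\omega_\tau}$; since $G_h\Pi_h(\bm{q}-\bm{q}_L)|_\tau$ is a linear function on $\tau$, norm equivalence then gives the $L^2(\tau)$ bound $\lesssim h^2|\bm{q}|_{2,\omega_\tau}$ as well. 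Summing over $\tau$ and using the finite overlap of the patches contributes $\lesssim h^2|\bm{q}|_{2,\Omega}\leq h^{1+\rho}||\nabla\bm{q}||_{1,\Omega}$, since $\rho\leq 1$.

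The main content is the consistency term $\bm{q}_L - G_h\Pi_h\bm{q}_L$, which restricted to $\tau$ is linear, so its $L^2(\tau)$ norm is bounded by $|\tau|^{1/2}$ times the maximum over $k$ of $|(\bm{q}_L-G_h\Pi_h\bm{q}_L)(\bm{m}_k)|$. On a ``good'' triangle -- one whose three midpoint patches consist of interior edges in $\mathcal{E}_1$ or of boundary patches at vertices in $\mathcal{P}_1$ -- I would apply \cref{patchestimate} to get
\begin{equation*}
|(\bm{q}_L-G_h\Pi_h\bm{q}_L)(\bm{m}_k)|\lesssim h^\alpha||\nabla\bm{q}_L||_{0,\omega_k}\lesssim h^{1+\alpha}|\bm{q}|_{1,\infty,\omega_\tau},
\end{equation*}
and summing over the $O(h^{-2})$ good triangles would produce a contribution of order $h^{1+\alpha}|\bm{q}|_{1,\infty,\Omega}$. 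On the remaining ``bad'' triangles -- those adjacent to an edge in $\mathcal{E}_2$ or to a vertex in $\mathcal{P}_2$ -- \cref{patchestimate} is unavailable, and I would settle for the crude pointwise bound $|(\bm{q}_L-G_h\Pi_h\bm{q}_L)(\bm{m}_k)|\lesssim h|\bm{q}|_{1,\infty,\omega_\tau}$, which follows from direct approximation and inverse estimates on $\Pi_h$ applied to the linear $\bm{q}_L$. The $(\alpha,\sigma)$-condition bounds the total area of bad triangles by $O(h^\sigma)$, and $|\mathcal{P}_2|=\kappa$ is fixed, so the bad contribution is $\lesssim h^{1+\sigma/2}|\bm{q}|_{1,\infty,\Omega}$. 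Combining all pieces yields the stated bound with $\rho=\min(1,\alpha,\sigma/2)$.

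The step I expect to be most delicate is the treatment of triangles abutting $\partial\Omega$: the boundary formula in \cref{patch1} defines $G_h\bm{q}_h$ at a boundary midpoint by extrapolation from two interior midpoints, so the bookkeeping for $\omega_\tau$, the approximate parallelogram hypothesis in \cref{approxpara}, and the classification into good versus bad patches must all be carried out consistently with \cref{alphasigma}. In particular, the extrapolation factor of $2$ in \cref{patch1} amplifies the pointwise midpoint error by a bounded constant, which has to be absorbed into the asymptotically negligible contribution from the $\kappa$ vertices in $\mathcal{P}_2$.
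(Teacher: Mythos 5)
Your proposal is correct and follows essentially the same route as the paper: the same three-term decomposition through a local linear polynomial $\bm{q}_L$, the same use of \cref{patchestimate} on patches satisfying the approximate parallelogram condition, and the same small-measure argument (area $\mathcal{O}(h^{\sigma})$ plus the $\kappa$ exceptional boundary vertices) for the remaining triangles. The only minor slip is attributing the "good" boundary patches to vertices in $\mathcal{P}_{1}$; what \cref{patchestimate} actually requires there is that the two \emph{interior} edges $e^{\prime},e^{\prime\prime}$ of the extrapolation patch lie in $\mathcal{E}_{1}$, which is how the paper defines $\Omega_{2}$ --- but this does not affect the validity of the argument.
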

\begin{proof}
Because $G_{h}$ is defined locally, we only need to estimate $\bm{q}-G_{h}\Pi_{h}\bm{q}$ element by element.
We partition the domain into three disjoint parts $\{\Omega_{i}\}_{i=1}^{3}$. $\Omega_{1}$ is covered by interior elements whose three edges belong to $\mathcal{E}_{1}$. $\Omega_{2}$ is covered by boundary elements $\tau$ that forms an approximate
parallelogram with one of its interior adjacent element $\tau^{\prime}$ and $\tau^{\prime}$ forms an approximate parallelogram with one of its interior adjacent elements $\tau^{\prime\prime}$, as in \cref{patch1}, see the pattern \cref{bdygh}. $\Omega_{3}$ is the complement of $\Omega_{1}\cup\Omega_{2}$. Then
\begin{equation}\label{split}
||\bm{q}-G_{h}\Pi_{h}\bm{q}||_{0,\Omega}^{2}
=\sum_{i=1}^{3}\sum_{\tau\subset\Omega_{i}}||\bm{q}-G_{h}\Pi_{h}\bm{q}||_{0,\tau}^{2}
=\sum_{i=1}^{3}\Rom{1}_{i}.
\end{equation}
For each element $\tau$, let $\tilde{\tau}$ denote the union of elements sharing a side with $\tau$.
For $\tau\subset\Omega_{1}$ or $\Omega_{2}$, $||\bm{q}-G_{h}\Pi_{h}\bm{q}||_{0,\tau}$ is estimated by passing through a linear polynomial
$\bm{q}_{L}\in P_{1}({\tilde{\tau}})^{2}$:
\begin{equation}\label{splitinterp}
\begin{aligned}
||\bm{q}-G_{h}\Pi_{h}\bm{q}||_{0,\tau}&\lesssim||\bm{q}-\bm{q}_{L}||_{0,\tau}\\
&+||G_{h}\Pi_{h}(\bm{q}-\bm{q}_{L})||_{0,\tau}+||\bm{q}_{L}-G_{h}\Pi_{h}\bm{q}_{L}||_{0,\tau}.
\end{aligned}
\end{equation}
By the Bramble--Hilbert lemma and scaling argument, there exists $\bm{q}_{L}\in\mathcal{P}_{1}(\tilde{\tau})^{2}$ such that
\begin{equation}\label{1stinterp}
||\bm{q}-\bm{q}_{L}||_{s,\tilde{\tau}}\lesssim h^{2-s}|\bm{q}|_{2,\tilde{\tau}},\quad s=0, 1,
\end{equation}
and
\begin{equation}\label{3rdinterp}
\begin{aligned}
||G_{h}\Pi_{h}(\bm{q}-\bm{q}_{L})||_{0,\tau}&\lesssim h||G_{h}\Pi_{h}(\bm{q}-\bm{q}_{L})||_{0,\infty,\tau}\\
&\lesssim h||\bm{q}-\bm{q}_{L}||_{0,\infty,\tilde{\tau}}\lesssim h^{2}|\bm{q}|_{2,\tilde{\tau}}.
\end{aligned}
\end{equation}
Then by \cref{patchestimate,splitinterp,1stinterp,3rdinterp}, we have
\begin{equation}\label{omega12}
\begin{aligned}
\Rom{1}_{1}+\Rom{1}_{2}&\lesssim\sum_{i=1}^{2}\sum_{\tau\subset\Omega_{i}}\left\{h^{4}|\bm{q}|_{2,\tilde{\tau}}^{2}
+h^{2}||\bm{q}_{L}-G_{h}\Pi_{h}\bm{q}_{L}||_{0,\infty,\tau}^{2}\right\}\\
&\lesssim\sum_{i=1}^{2}\sum_{\tau\subset\Omega_{i}}\left\{h^{4}|\bm{q}|_{2,\tilde{\tau}}^{2}
+h^{2}\max_{1\leq k\leq3}(\bm{q}_{L}(\bm{m}_{k})-G_{h}\Pi_{h}\bm{q}_{L}(\bm{m}_{k}))^{2}\right\}\\
&\lesssim h^{2+2\min(1,\alpha)}\sum_{i=1}^{2}\sum_{\tau\subset\Omega_{i}}(|\bm{q}|_{2,\tilde{\tau}}^{2}
+||\nabla\bm{q}_{L}||_{\tilde{\tau}}^{2})\\
&\lesssim h^{2+2\min(1,\alpha)}||\nabla\bm{q}||_{1,\Omega}^{2}.
\end{aligned}
\end{equation}
By the $(\alpha,\sigma)$-condition and local quasi-uniformity of $\mathcal{T}_{h}$,
$|\Omega_{3}|$ is forced to be of the size $\mathcal{O}(h^{\sigma})$. Since $G_{h}\Pi_{h}\bm{q}_{C}=\bm{q}_{C}$ for 
$\bm{q}_{C}\in\mathcal{P}_{0}(\tilde{\tau})^{2}$, $\Rom{1}_{3}$ can be estimated by the Bramble--Hilbert lemma with a scaling argument
and the small measure of $\Omega_{3}$:
\begin{equation}\label{omega3}
\begin{aligned}
\Rom{1}_{3}&=\sum_{\tau\subset\Omega_{3}}||\bm{q}-G_{h}\Pi_{h}\bm{q}||_{0,\tau}^{2}\\
&\lesssim\sum_{\tau\subset\Omega_{3}}h^{2}|\bm{q}|_{1,\tilde{\tau}}^{2}\lesssim h^{2}\int_{\Omega_{3}}|\nabla\bm{q}|^{2}
\lesssim h^{2+\sigma}|\bm{q}|_{1,\infty,\Omega}^{2}.\\
\end{aligned}
\end{equation}
By \cref{split,omega12,omega3}, we obtain \cref{mainthminterp}.
\end{proof}

The supercovnergence of $||\bm{p}-G_{h}\bm{p}_{h}||_{0,\Omega}$ is a direct result from \cref{superclosep,mainthminterp}.
\begin{theorem}\label{finaltheorem}
Let $\mathcal{T}_{h}$ be quasi-uniform and satisfy the $(\alpha,\sigma)$-condition. Then
\begin{equation*}
||\bm{p}-G_{h}\bm{p}_{h}||_{0,\Omega}\lesssim h^{1+\rho}||u||_{4+\varepsilon,\Omega}.
\end{equation*}
\end{theorem}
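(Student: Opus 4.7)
The proof reduces immediately to the two preceding theorems via the triangle inequality. I would first split
\begin{equation*}
||\bm{p}-G_{h}\bm{p}_{h}||_{0,\Omega}\leq||\bm{p}-G_{h}\Pi_{h}\bm{p}||_{0,\Omega}+||G_{h}(\Pi_{h}\bm{p}-\bm{p}_{h})||_{0,\Omega}
\end{equation*}
and estimate the two terms separately. The first is an interpolation-type error handled directly by \cref{mainthminterp}, while the second is of supercloseness type and will be reduced to \cref{superclosep} through an $L^{2}$-stability estimate for $G_{h}$.

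For the first term, \cref{mainthminterp} applied to $\bm{q}=\bm{p}$ yields the bound $h^{1+\rho}(||\nabla\bm{p}||_{1,\Omega}+|\bm{p}|_{1,\infty,\Omega})$. Since $\bm{p}=\bm{A}\nabla u+\bm{b}u$ with smooth coefficients $\bm{A},\bm{b}$, we have $||\nabla\bm{p}||_{1,\Omega}\lesssim||u||_{3,\Omega}$, and the two-dimensional Sobolev embedding $H^{2+\varepsilon/2}(\Omega)\subset W^{1,\infty}(\Omega)$ gives $|\bm{p}|_{1,\infty,\Omega}\lesssim||u||_{3+\varepsilon,\Omega}$. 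Both are absorbed into $||u||_{4+\varepsilon,\Omega}$, producing the desired estimate for this term.

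For the second term, I would establish the $L^{2}$-stability
\begin{equation*}
||G_{h}\bm{q}_{h}||_{0,\Omega}\lesssim||\bm{q}_{h}||_{0,\Omega},\quad\bm{q}_{h}\in\mathcal{Q}_{h}.
\end{equation*}
By \cref{patch1}, the value of $G_{h}\bm{q}_{h}$ at every edge midpoint is a fixed affine combination, with coefficients bounded independently of $h$, of values of $\bm{q}_{h}$ taken on a small patch $\tilde{\tau}$. Quasi-uniformity, the finite overlap of the patches, and norm equivalence on the reference element then give stability element by element, which sums to the global bound. Applying this to $\bm{q}_{h}=\Pi_{h}\bm{p}-\bm{p}_{h}$ and invoking \cref{superclosep} bounds the second summand by $h^{1+\rho}||u||_{4+\varepsilon,\Omega}$.

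The only point that requires verification is the stability on the boundary strip, where \cref{patch1} uses extrapolation with coefficients $2$ and $-1$: these are uniform in $h$, and since each boundary midpoint reaches only one layer inward, while the number of boundary patches is $\mathcal{O}(h^{-1})$ and each has area $\mathcal{O}(h^{2})$, squaring and summing costs only an $\mathcal{O}(1)$ multiplicative constant. Combining the two bounds then gives the conclusion.
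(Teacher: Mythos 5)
Your proposal is correct and follows essentially the same route as the paper: the same triangle-inequality splitting through $G_{h}\Pi_{h}\bm{p}$, the estimate of the first term by \cref{mainthminterp}, and the $L^{2}$-stability of $G_{h}$ (proved by the same local norm-equivalence/bounded-coefficient argument) combined with \cref{superclosep} for the second term. The only addition is your explicit Sobolev-embedding justification that $\|\nabla\bm{p}\|_{1,\Omega}+|\bm{p}|_{1,\infty,\Omega}\lesssim\|u\|_{4+\varepsilon,\Omega}$, which the paper leaves implicit.
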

\begin{proof}
For $\bm{q}_{h}\in\mathcal{Q}_{h}$ and $\tau\in\mathcal{T}_{h}$,
\begin{equation*}
\begin{aligned}
||G_{h}\bm{q}_{h}||_{0,\tau}&\lesssim h||G_{h}\bm{q}_{h}||_{0,\infty,\tau}\\
&\lesssim h\max_{1\leq k\leq3}|G_{h}\bm{q}_{h}(\bm{m}_{k})|\\
&\lesssim h||\bm{q}_{h}||_{0,\infty,\tilde{\tau}}\lesssim||\bm{q}_{h}||_{0,\tilde{\tau}},
\end{aligned}
\end{equation*}
and then
\begin{equation}\label{bdgh}
||G_{h}\bm{q}_{h}||_{0,\Omega}\lesssim||\bm{q}_{h}||_{0,\Omega},
\end{equation}
that is, $G_{h}$ is bounded in $L^{2}$ norm.
Combining \cref{superclosep,mainthminterp,bdgh}, we have
\begin{equation*}
\begin{aligned}
||\bm{p}-G_{h}\bm{p}_{h}||_{0,\Omega}&\lesssim||\bm{p}-G_{h}\Pi_{h}\bm{p}_{h}||_{0,\Omega}
+||G_{h}(\Pi_{h}\bm{p}-\bm{p}_{h})||_{0,\Omega}\\
&\lesssim h^{1+\rho}||u||_{4+\varepsilon,\Omega}.
\end{aligned}
\end{equation*}
\end{proof}

\subsection{Superconvergence for CR nonconforming elements}\label{subsec:CR}
As one can see in \cref{sec:varerr,subsec:Gh}, the CR nonconforming method is
closely related to the RT mixed method. In this subsection, we prove the superconvergence
estimate for CR nonconforming methods on $(\alpha,\sigma)$-grids by \cref{finaltheorem}. 
For simplicity, we only consider Poisson's equation with the homogeneous
Dirichlet boundary condition, i.e. $\bm{A}=\bm{I}_{2\times2}, \bm{b}=\bm{0}, c=0, g=0$ in \cref{mix:c1}.
In this case, $\bm{p}=\nabla u$ in \cref{mix:c}. 
The corresponding CR nonconforming method is to find $u_{h}^{CR}\in\mathcal{V}_{0h}^{CR}$, such that
\begin{equation}\label{CR}
(\nabla_{h}{u}_{h}^{CR},\nabla_{h}v_{h})=(f,v_{h}),\quad v_{h}\in\mathcal{V}_{0h}^{CR},
\end{equation}
where
\begin{equation*}
\mathcal{V}^{CR}_{0h} := \{v\in\mathcal{V}^{CR}_{h} : v=0 \text{ at the midpoints of boundary edges of } \mathcal{T}_{h}.\}.
\end{equation*}

Marini \cite{Marini1985} proved the following theorem.
\begin{theorem}\label{Mariniequiv}
Let $\bar{u}_{h}^{CR}\in\mathcal{V}_{0h}^{CR}$ solve
\begin{equation}\label{CRbar}
(\nabla_{h}\bar{u}_{h}^{CR},\nabla_{h}v_{h})=(P_{h}f,v_{h}),\quad v_{h}
\in\mathcal{V}_{0h}^{CR}.
\end{equation}
Let $\{\bar{\bm{p}}_{h},\bar{u}_{h}\}\in\mathcal{Q}_{h}\times\mathcal{V}_{h}$
solve the following mixed problem:
\begin{subequations}
\begin{align}
&(\bar{\bm{p}}_{h},\bm{q}_{h})+(\divg\bm{q}_{h},\bar{u}_{h})=0,\quad\bm{q}_{h}\in\mathcal{Q}_{h},\nonumber\\
&(\divg\bar{\bm{p}}_{h},v_{h})=-(f,v_{h}),\quad v_{h}\in\mathcal{V}_{h}.\nonumber
\end{align}
\end{subequations}
Then
\begin{equation*}
\bar{\bm{p}}_{h}(\bm{x})=\nabla\bar{u}_{h}^{CR}-\frac{P_{h}f|_{\tau}}{2}(\bm{x}-\bm{x}_{\tau}),\quad\bm{x}\in\tau,
\end{equation*}
where $\bm{x}_{\tau}$ is the barycenter of $\tau\in\mathcal{T}_{h}$.
\end{theorem}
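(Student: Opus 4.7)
The plan is to verify the formula by constructing a pair $\{\tilde{\bm{p}}_h, \tilde{u}_h\} \in \mathcal{Q}_h \times \mathcal{V}_h$ that satisfies the same mixed system as $\{\bar{\bm{p}}_h, \bar{u}_h\}$ and then invoke uniqueness. Define $\tilde{\bm{p}}_h|_\tau$ by the right-hand side of the claimed identity. Clearly $\tilde{\bm{p}}_h|_\tau \in \mathcal{RT}_0(\tau)$ by \cref{formRT}, and $\divg \tilde{\bm{p}}_h|_\tau = -P_h f|_\tau$ because $\nabla \bar{u}_h^{CR}|_\tau$ is constant and $\divg(\bm{x} - \bm{x}_\tau) = 2$ in two dimensions. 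Hence the second mixed equation follows at once: $(\divg \tilde{\bm{p}}_h, v_h) = -(P_h f, v_h) = -(f, v_h)$ for all $v_h \in \mathcal{V}_h$.

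The first nontrivial step is verifying $\tilde{\bm{p}}_h \in \mathcal{Q}_h$, i.e., continuity of the normal trace across each interior edge $e = \tau \cap \tau'$. Since $\tilde{\bm{p}}_h|_\tau \cdot \bm{n}_e$ is constant on $e$ (the function $\bm{x}\cdot\bm{n}_e$ is constant along the line carrying $e$), it suffices to match midpoint values. The resulting scalar identity involves $\nabla \bar{u}_h^{CR}|_\tau \cdot \bm{n}_e$, $\nabla \bar{u}_h^{CR}|_{\tau'} \cdot \bm{n}_e$, $P_h f|_\tau$, $P_h f|_{\tau'}$, and the local geometry of the patch $\tau \cup \tau'$. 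I would derive it by testing \cref{CRbar} against the CR basis function $\phi_e^{CR}$ supported on $\tau \cup \tau'$, whose piecewise gradient is proportional to $\bm{n}_e$ via $\nabla\lambda_k = -\bm{n}_k/d_k$; this converts the discrete Dirichlet form on the left into exactly the normal-component jump produced by the formula for $\tilde{\bm{p}}_h$.

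For the first mixed equation, I would expand $(\tilde{\bm{p}}_h, \bm{q}_h)$ as a sum over triangles and integrate by parts. The gradient piece yields
\begin{equation*}
\sum_\tau \int_\tau \nabla \bar{u}_h^{CR} \cdot \bm{q}_h = -\sum_\tau \int_\tau \bar{u}_h^{CR}\,\divg \bm{q}_h + \sum_e \int_e [\bar{u}_h^{CR}]\,\bm{q}_h\cdot\bm{n}_e,
\end{equation*}
and the edge sum collapses because $\bm{q}_h\cdot\bm{n}_e$ is constant on each edge, $\int_e[\bar{u}_h^{CR}] = 0$ on interior edges by CR continuity at midpoints, and $\int_e \bar{u}_h^{CR} = 0$ on boundary edges by the homogeneous CR boundary condition. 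The $(\bm{x} - \bm{x}_\tau)$ piece contributes $-\frac{P_h f|_\tau}{4}\,\divg\bm{q}_h|_\tau\int_\tau |\bm{x} - \bm{x}_\tau|^2$ per triangle upon writing $\bm{q}_h|_\tau = \bm{a}_\tau + a_\tau\bm{x}$ and using $\divg\bm{q}_h|_\tau = 2a_\tau$ together with $\int_\tau(\bm{x} - \bm{x}_\tau) = 0$. Combining both pieces recovers $-(\divg\bm{q}_h,\tilde{u}_h)$ with the explicit piecewise constant
\begin{equation*}
\tilde{u}_h|_\tau = \bar{u}_h^{CR}(\bm{x}_\tau) + \frac{P_h f|_\tau}{4|\tau|}\int_\tau |\bm{x} - \bm{x}_\tau|^2.
\end{equation*}

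The main obstacle will be the midpoint identity underlying the flux continuity: the $(\bm{x} - \bm{x}_\tau)$ correction depends on $P_h f|_\tau$ and $P_h f|_{\tau'}$ asymmetrically across $e$, so the cancellation leading to continuity relies delicately on the CR variational equation tested against $\phi_e^{CR}$. Once this step is secured, the standard inf-sup theory for $\mathcal{Q}_h \times \mathcal{V}_h$ provides uniqueness of the mixed solution and forces $\tilde{\bm{p}}_h = \bar{\bm{p}}_h$, yielding the claimed identity.
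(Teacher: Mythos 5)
The paper does not prove this theorem at all---it is quoted verbatim from Marini \cite{Marini1985}---so there is no internal proof to compare against; I can only assess your argument on its own terms, and it is correct. Your plan (define $\tilde{\bm{p}}_h$ by the claimed formula, check $\tilde{\bm{p}}_h\in\mathcal{Q}_h$, exhibit a companion $\tilde{u}_h$ so that the pair solves the mixed system, and conclude by uniqueness of the RT${}_0\times P_0$ discretization) is sound, and the step you flag as the main obstacle does close: testing \cref{CRbar} with the CR edge basis function gives $\ell_e\,[\nabla\bar{u}_h^{CR}\cdot\bm{n}_e]=\tfrac13\bigl(|\tau|P_hf|_{\tau}+|\tau'|P_hf|_{\tau'}\bigr)$, while $(\bm{x}-\bm{x}_\tau)\cdot\bm{n}_e=2|\tau|/(3\ell_e)$ on $e$ (and $-2|\tau'|/(3\ell_e)$ from the other side), so the jump of the correction term cancels the jump of $\nabla_h\bar{u}_h^{CR}\cdot\bm{n}_e$ exactly. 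The integration-by-parts verification of the first mixed equation and the explicit formula for $\tilde{u}_h|_\tau$ are likewise correct, using $\int_e[\bar{u}_h^{CR}]=0$ (midpoint continuity plus linearity) and $\int_\tau(\bm{x}-\bm{x}_\tau)=0$. For context, Marini's original derivation goes the other way: she starts from the hybridized (statically condensed) form of the mixed method, identifies the interelement Lagrange multipliers with the Crouzeix--Raviart midpoint values, and reads off the local expression for $\bar{\bm{p}}_h$; your direct ``guess and verify by uniqueness'' route is more elementary and avoids hybridization entirely, at the price of having to verify normal-trace continuity by hand, which you do correctly.
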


By \cref{Mariniequiv,resultB}, Hu and Ma \cite{HuMa2016} proved the following estimate for Poisson's equation on uniform grids:
\begin{equation}\label{resultHM}
||\nabla u-G_{h}\nabla_{h}u_{h}^{CR}||_{0,\Omega}\lesssim h^{\frac{3}{2}}(||u||_{\frac{5}{2},\Omega}
+h^{\frac{1}{2}}|u|_{3,\Omega}+h^{\frac{1}{2}}|f|_{1,\infty,\Omega}).
\end{equation}
Based on the idea of \cite{HuMa2016}, we can prove the following superconvergence estimate for CR elements by \cref{finaltheorem,Mariniequiv}.
\begin{theorem}\label{superCR}
Let $\mathcal{T}_{h}$ be a quasi-uniform $(\alpha,\sigma)$-grid. Let $u_{h}^{CR}$ solve \cref{CR}. Then
\begin{equation*}
||\nabla u-G_{h}\nabla_{h}u_{h}^{CR}||_{0,\Omega}\lesssim h^{1+\rho}||u||_{4+\varepsilon,\Omega}.
\end{equation*}
\end{theorem}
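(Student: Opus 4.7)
The plan is to follow the Hu--Ma reduction strategy: connect the CR nonconforming gradient to the RT mixed flux via Marini's identity \cref{Mariniequiv}, and then invoke the sharp superconvergence estimate \cref{finaltheorem}. Since \cref{finaltheorem} already yields $||\bm{p}-G_{h}\bm{p}_{h}||_{0,\Omega}\lesssim h^{1+\rho}||u||_{4+\varepsilon,\Omega}$, what remains is to show that $G_{h}\nabla_{h}u_{h}^{CR}$ approximates $G_{h}\bm{p}_{h}$ to the same order.

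First I would introduce the auxiliary CR solution $\bar{u}_{h}^{CR}$ of \cref{CRbar}. Because $\mathcal{V}_{h}$ consists of piecewise constants, $(f,v_{h})=(P_{h}f,v_{h})$ for $v_{h}\in\mathcal{V}_{h}$, so the mixed solution $\bar{\bm{p}}_{h}$ in \cref{Mariniequiv} coincides with $\bm{p}_{h}$. Marini's identity then reads $\nabla_{h}\bar{u}_{h}^{CR}=\bm{p}_{h}+\frac{P_{h}f|_{\tau}}{2}(\bm{x}-\bm{x}_{\tau})$ on each $\tau$, which yields the decomposition
\begin{equation*}
\nabla u-G_{h}\nabla_{h}u_{h}^{CR}=\bigl(\bm{p}-G_{h}\bm{p}_{h}\bigr)-G_{h}\Bigl[\tfrac{P_{h}f}{2}(\bm{x}-\bm{x}_{\tau})\Bigr]+G_{h}\nabla_{h}(\bar{u}_{h}^{CR}-u_{h}^{CR}).
\end{equation*}

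The first summand is controlled by \cref{finaltheorem}. For the third, subtracting \cref{CRbar} from \cref{CR} gives $(\nabla_{h}(u_{h}^{CR}-\bar{u}_{h}^{CR}),\nabla_{h}v_{h})=(f-P_{h}f,v_{h})$; testing with $v_{h}=u_{h}^{CR}-\bar{u}_{h}^{CR}$ and using $(f-P_{h}f,P_{h}v_{h})=0$ together with $||v_{h}-P_{h}v_{h}||_{0,\Omega}\lesssim h||\nabla_{h}v_{h}||_{0,\Omega}$ and $||f-P_{h}f||_{0,\Omega}\lesssim h|f|_{1,\Omega}$ yields $||\nabla_{h}(u_{h}^{CR}-\bar{u}_{h}^{CR})||_{0,\Omega}\lesssim h^{2}||u||_{3,\Omega}$. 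Combined with the $L^{2}$-boundedness of $G_{h}$ (cf. \cref{bdgh}), this contributes $O(h^{2}||u||_{3,\Omega})$.

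The main obstacle is the middle correction term. Setting $\bm{\eta}|_{\tau}=\frac{P_{h}f|_{\tau}}{2}(\bm{x}-\bm{x}_{\tau})$, I would evaluate $G_{h}\bm{\eta}$ at each edge midpoint $\bm{m}$. On an interior edge shared by a parallelogram pair $(\tau,\tau^{\prime})$ in $\mathcal{E}_{1}$, the $(\alpha,\sigma)$-condition forces $(\bm{m}-\bm{x}_{\tau})+(\bm{m}-\bm{x}_{\tau^{\prime}})=O(h^{1+\alpha})$, so
\begin{equation*}
G_{h}\bm{\eta}(\bm{m})=\tfrac{1}{4}\bigl[P_{h}f|_{\tau}(\bm{m}-\bm{x}_{\tau})+P_{h}f|_{\tau^{\prime}}(\bm{m}-\bm{x}_{\tau^{\prime}})\bigr]=O\bigl(h^{2}||f||_{1,\infty,\Omega}+h^{1+\alpha}||f||_{0,\infty,\Omega}\bigr),
\end{equation*}
since $|P_{h}f|_{\tau}-P_{h}f|_{\tau^{\prime}}|\lesssim h||\nabla f||_{0,\infty,\Omega}$. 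The extrapolation formula in \cref{patch1} combined with the boundary $(\alpha,\sigma)$-condition gives the same order at $\mathcal{P}_{1}$-vertices. On the exceptional set of patches associated with $\mathcal{E}_{2}$ and $\mathcal{P}_{2}$, one only has the pointwise bound $|G_{h}\bm{\eta}(\bm{m})|=O(h||f||_{0,\infty,\Omega})$, but the total measure of such patches is $O(h^{\sigma})$. Partitioning $\Omega=\Omega_{1}\cup\Omega_{2}\cup\Omega_{3}$ exactly as in the proof of \cref{mainthminterp} and using the inverse estimate $||G_{h}\bm{\eta}||_{0,\tau}^{2}\lesssim h^{2}\max_{k}|G_{h}\bm{\eta}(\bm{m}_{k})|^{2}$ element by element, the $\Omega_{1}\cup\Omega_{2}$ contribution is $O(h^{1+\min(1,\alpha)})$ and the $\Omega_{3}$ contribution is $O(h^{1+\sigma/2})$, yielding the claimed $h^{1+\rho}$ bound with the $f$-norms absorbed into $||u||_{4+\varepsilon,\Omega}$ by elliptic regularity and Sobolev embedding.
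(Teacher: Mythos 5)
Your proposal is correct and follows essentially the same route as the paper: reduce to the mixed method via Marini's identity, split into the three terms $\bm{p}-G_{h}\bm{p}_{h}$, the correction $G_{h}\bigl[\tfrac{P_{h}f}{2}(\bm{x}-\bm{x}_{\tau})\bigr]=-G_{h}(\bar{\bm{p}}_{h}-\nabla_{h}\bar{u}_{h}^{CR})$, and $G_{h}\nabla_{h}(\bar{u}_{h}^{CR}-u_{h}^{CR})$, then handle them by \cref{finaltheorem}, the midpoint cancellation under the $(\alpha,\sigma)$-condition with the partition from \cref{mainthminterp}, and the $L^{2}$-orthogonality argument with \cref{bdgh}, respectively. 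The only differences are cosmetic (you write the middle term explicitly and track the $h^{2}\|f\|_{1,\infty}$ versus $h^{1+\alpha}\|f\|_{0,\infty}$ split slightly more carefully than the paper does).
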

\begin{proof}
Split $||\nabla u-G_{h}\nabla_{h}u_{h}^{CR}||_{0,\Omega}$ as
\begin{equation}\label{splitCR}
\begin{aligned}
&||\nabla u-G_{h}\nabla_{h}u_{h}^{CR}||_{0,\Omega}\\
\lesssim&||\nabla u-G_{h}\bar{\bm{p}}_{h}||_{0,\Omega}
+||G_{h}(\bar{\bm{p}}_{h}-\nabla_{h}\bar{u}_{h}^{CR})||_{0,\Omega}\\
+&||G_{h}(\nabla_{h}\bar{u}_{h}^{CR}-\nabla_{h}u_{h}^{CR})||_{0,\Omega}=\Rom{1}+\Rom{2}+\Rom{3}.
\end{aligned}
\end{equation}
$\Rom{1}$ can be estimated by \cref{finaltheorem}:
\begin{equation}\label{bdICR}
\Rom{1}\lesssim h^{1+\rho}||u||_{4+\varepsilon,\Omega}.
\end{equation}
For the second term, first consider the patch $\omega=\tau\cup\tau^{\prime}$ associated with an interior edge
$e$ having midpoint $\bm{m}$ in \cref{patch1}. By \cref{Mariniequiv}, we have
\begin{equation*}
G_{h}(\bar{\bm{p}}_{h}-\nabla_{h}\bar{u}_{h}^{CR})(\bm{m})=-\frac{1}{4}\left((\bm{m}-\bm{x}_{\tau})P_{h}f|_{\tau}
+(\bm{m}-\bm{x}_{\tau^{\prime}})P_{h}f|_{\tau^{\prime}}\right)
\end{equation*}
If $\tau$ and $\tau^{\prime}$ form an $\mathcal{O}(h^{1+\alpha})$ approximate parallelogram, then
\begin{equation}\label{interiormid}
|G_{h}(\bar{\bm{p}}_{h}-\nabla_{h}\bar{u}_{h}^{CR})(\bm{m})|\lesssim h^{1+\alpha}||f||_{1,\infty,\Omega}.
\end{equation}
Then consider the patch $\omega=\tau\cup\tau^{\prime}\cup\tau^{\prime\prime}$ associated with a boundary edge
$e$ having midpoint $\bm{m}$ in \cref{patch1}. \cref{interiormid} implies that
\begin{equation}\label{bdymid}
\begin{aligned}
|G_{h}(\bar{\bm{p}}_{h}-\nabla_{h}\bar{u}_{h}^{CR})(\bm{m})|
&\lesssim2|G_{h}(\bar{\bm{p}}_{h}-\nabla_{h}\bar{u}_{h}^{CR})(\bm{m}^{\prime})|\\
&+|G_{h}(\bar{\bm{p}}_{h}-\nabla_{h}\bar{u}_{h}^{CR})(\bm{m}^{\prime\prime})|
\lesssim h^{1+\alpha}||f||_{1,\infty,\Omega},
\end{aligned}
\end{equation}
provided $\tau,\tau^{\prime}$ and $\tau^{\prime},\tau^{\prime\prime}$ form $\mathcal{O}(h^{1+\alpha})$ approximate parallelograms.
Now we partition $\Omega$ into $\cup_{i=1}^{3}\Omega_{i}$ as in the proof of \cref{mainthminterp}.
By following the proof of \cref{mainthminterp} and using \cref{interiormid,bdymid}, $\Rom{2}$ can be estimated by
\begin{equation}\label{bdIICR}
\Rom{2}\lesssim h^{1+\rho}||f||_{1,\infty,\Omega}.
\end{equation}
As for $\Rom{3}$, it follows from \cref{CR,CRbar} that for $v_{h}\in\mathcal{V}_{0h}^{CR}$,
\begin{equation}\label{varCRCRbar}
\begin{aligned}
(\nabla_{h}\bar{u}_{h}^{CR}-\nabla_{h}u_{h}^{CR},\nabla_{h}v_{h})&=(f-P_{h}f,v_{h})\\
&=(f-P_{h}f,v_{h}-P_{h}v_{h})\\
&\lesssim h^{2}|f|_{1,\Omega}||\nabla_{h}v_{h}||_{0,\Omega},
\end{aligned}
\end{equation}
By setting $v_{h}=\bar{u}_{h}^{CR}-u_{h}^{CR}$ in \cref{varCRCRbar} and using boundedness 
of $G_{h}$ in $L^{2}$ norm, we have 
\begin{equation}\label{bdIIICR}
\Rom{3}\lesssim h^{2}|f|_{1,\Omega}.
\end{equation}
Then \cref{superCR} results from combining \cref{splitCR,bdICR,bdIICR,bdIIICR}.
\end{proof}

In the case of uniform grids $(\alpha=\sigma=\infty)$, \cref{superCR} implies that
\begin{equation*}
||\nabla u-G_{h}\nabla_{h}u_{h}^{CR}||_{0,\Omega}\lesssim h^{2}||u||_{4+\varepsilon,\Omega}.
\end{equation*}
which shows that is \cref{resultHM} suboptimal.
However, \cref{Mariniequiv} cannot be applied to \cref{mix:c1} with nonvanishing $\bm{b}$ and $c$.
It would be interesting to develop a formula similar  to \cite{Marini1985}  in a more general setting.

\section{Numerical experiments}\label{sec:numerexp}
\begin{figure}[tbhp]
\centering
\begin{tabular}[c]{ccccc}%
  \subfigure[]{\label{grida}\includegraphics[width=4.0cm,height=4.0cm]{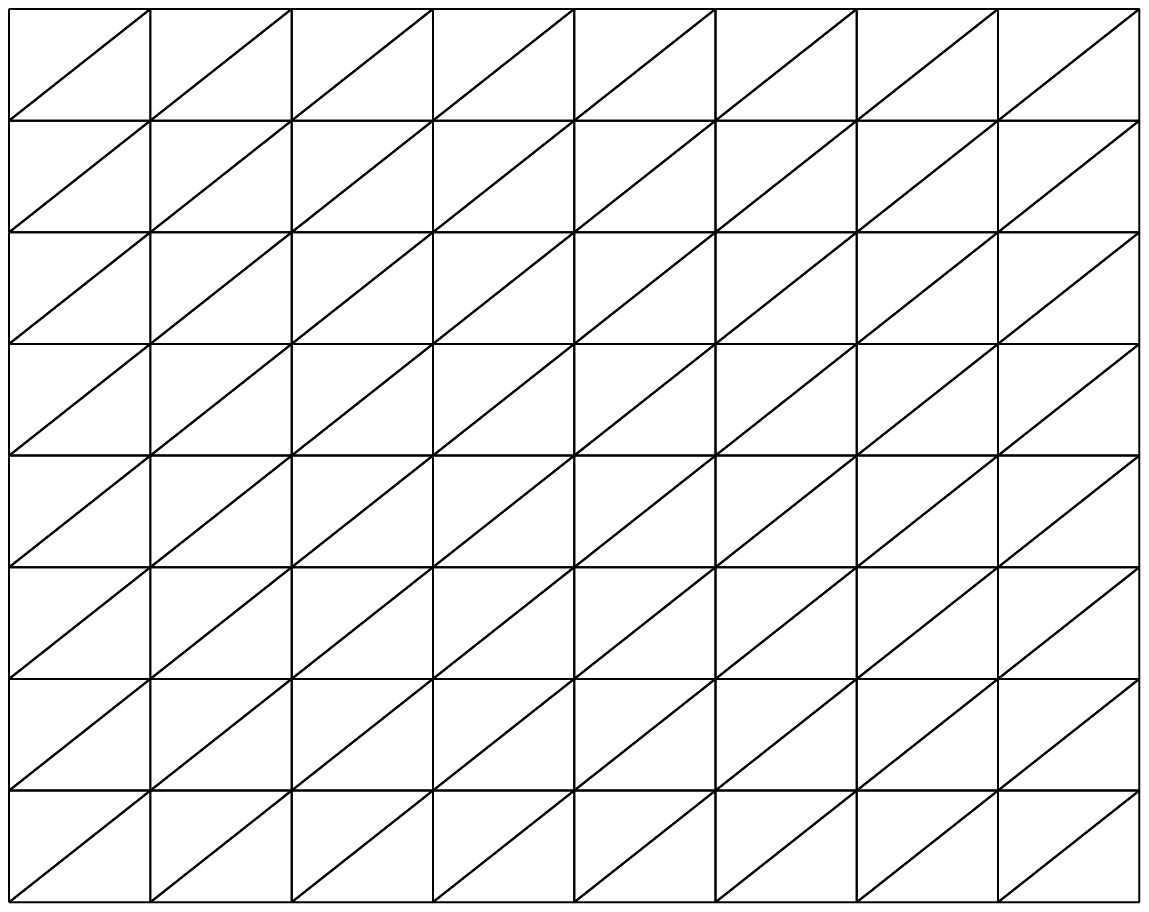}}
  \subfigure[]{\label{gridb}\includegraphics[width=4.0cm,height=4.0cm]{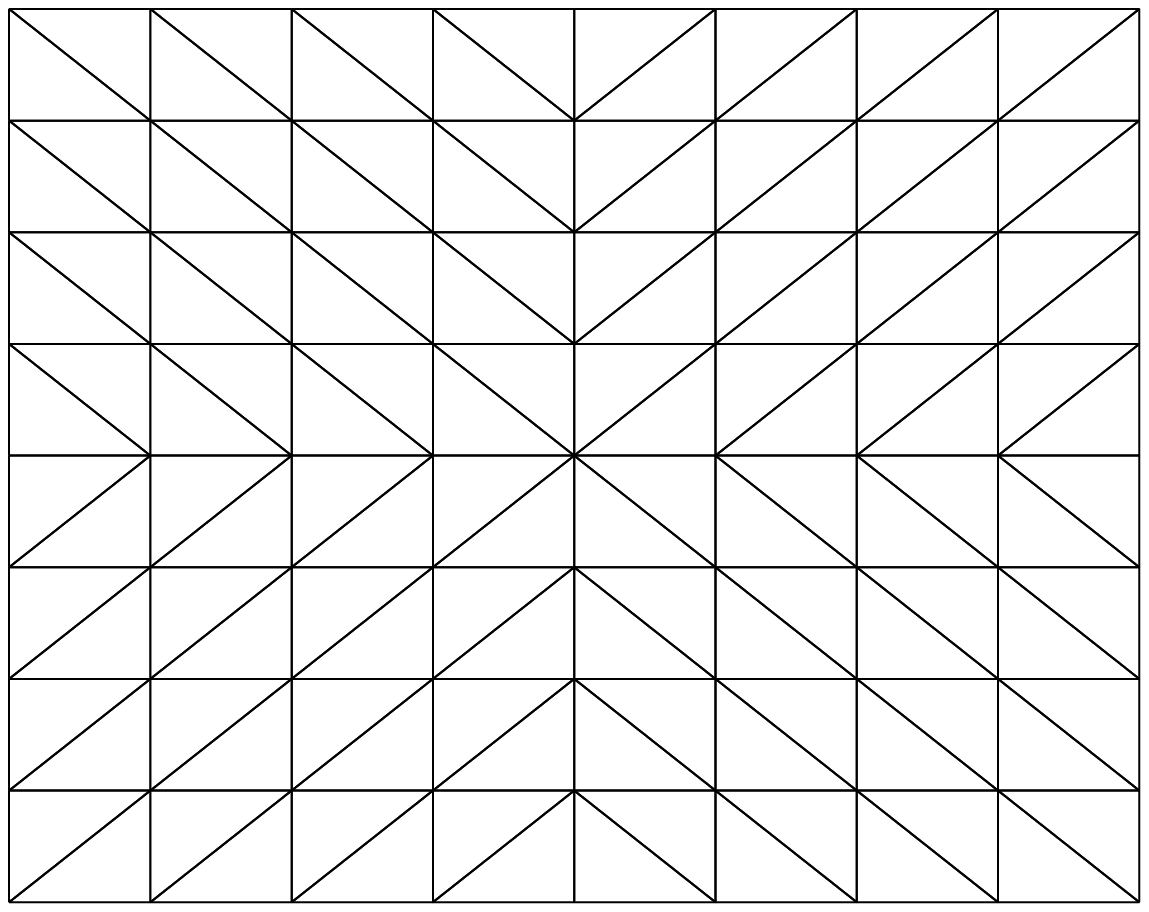}}
  \subfigure[]{\label{gridc}\includegraphics[width=4.0cm,height=4.0cm]{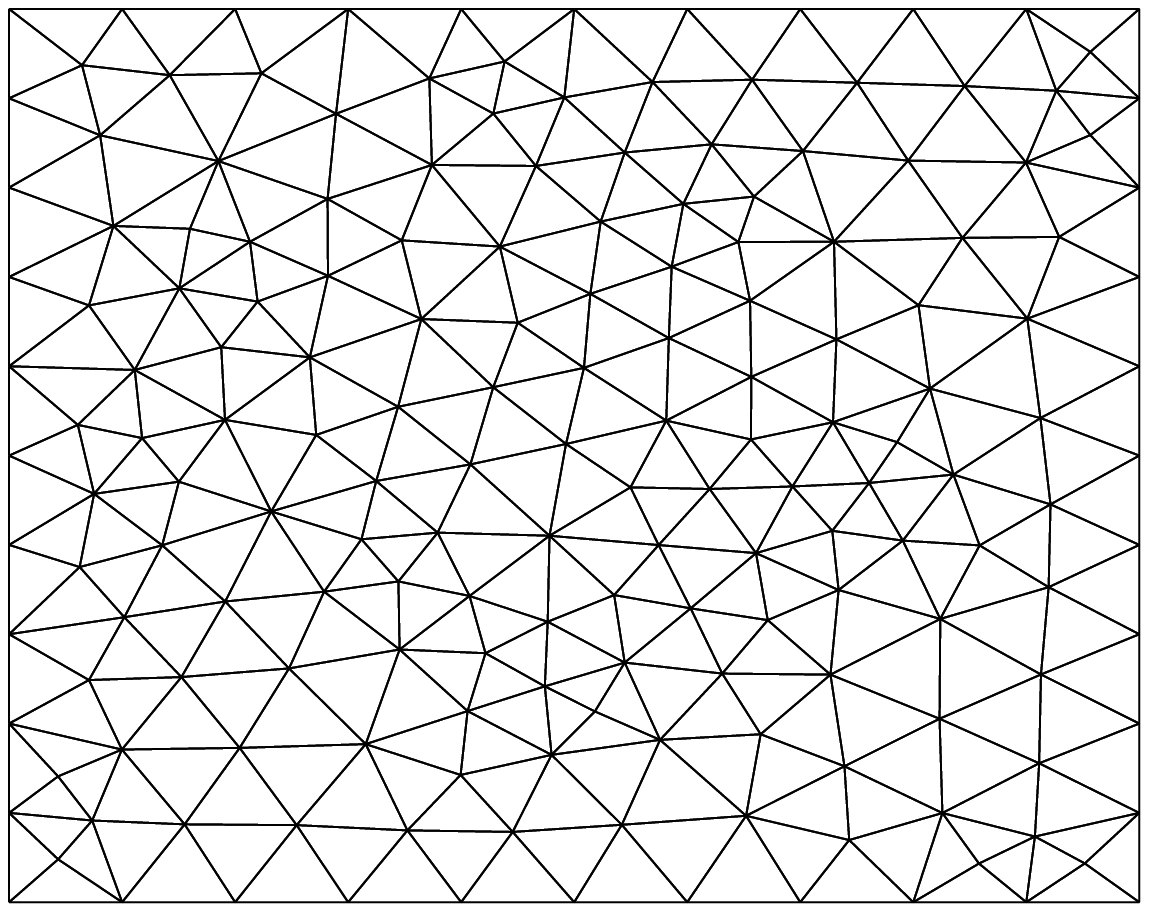}}
\end{tabular}
\caption{Three different grids}
\label{grid}
\end{figure}

\begin{table}[tbhp]
\caption{Uniform grids}
\label{tableu}
\centering
\begin{tabular}{|c|c|c|c|c|c|c|}
\hline
\multirow{2}*{nu} &  \multicolumn{2}{c|}{$||\bm{p}-\bm{p}_{h}||_{0,\Omega}$}
 & \multicolumn{2}{c|}{$||\Pi_{h}\bm{p}-\bm{p}_{h}||_{0,\Omega}$}
&  \multicolumn{2}{c|}{$||\bm{p}-G_{h}\bm{p}_{h}||_{0,\Omega}$} \\
\cline{2-7}
               &   Error &   Order  & Error & Order & Error & Order \\
\hline
               336   & 7.281e-1 & 0.9911 & 1.033e-1 & 1.979 & 2.629e-1 & 2.094 \\
             1312       & 3.663e-1 & 0.9972 & 2.620e-2 &  1.995 & 6.157e-2 & 2.062  \\
             5184         & 1.835e-1 & 0.9998 & 6.574e-3 & 1.999 & 1.475e-2 & 2.035 \\
               20608       & 9.176e-2 & 0.9997 & 1.645e-3 & 1.999 & 3.598e-3 & 2.003 \\
               82176       & 4.589e-2 &            &  4.114e-4  &       & 8.976e-4 &   \\
\hline
\end{tabular}
\end{table}

\begin{table}[tbhp]
\caption{Piecewise uniform grids}
\label{tablecc}
\centering
\begin{tabular}{|c|c|c|c|c|c|c|}
\hline
\multirow{2}*{nu} &  \multicolumn{2}{c|}{$||\bm{p}-\bm{p}_{h}||_{0,\Omega}$}
 & \multicolumn{2}{c|}{$||\Pi_{h}\bm{p}-\bm{p}_{h}||_{0,\Omega}$}
&  \multicolumn{2}{c|}{$||\bm{p}-G_{h}\bm{p}_{h}||_{0,\Omega}$} \\
\cline{2-7}
               &   Error &   Order  & Error & Order & Error & Order \\
\hline
               336   & 7.287e-1 & 0.9919 & 9.356e-2 & 1.937 & 2.898e-1 & 1.963 \\
             1312       & 3.664e-1 & 0.9976 & 2.449e-2 &  1.978 & 7.668e-2 & 1.790  \\
             5184         & 1.835e-1 & 0.9998 & 6.215e-3 & 1.993 & 2.217e-2 & 1.683 \\
               20608       & 9.176e-2 & 0.9997 & 1.561e-3 & 1.998 & 6.904e-3 & 1.607 \\
               82176       & 4.589e-2 &            &  3.907e-4  &          & 2.267e-3 &   \\
\hline
\end{tabular}
\end{table}

\begin{table}[tbhp]
\caption{Unstructured grids}
\label{tablems}
\centering
\begin{tabular}{|c|c|c|c|c|c|c|}
\hline
\multirow{2}*{nu} &  \multicolumn{2}{c|}{$||\bm{p}-\bm{p}_{h}||_{0,\Omega}$}
 & \multicolumn{2}{c|}{$||\Pi_{h}\bm{p}-\bm{p}_{h}||_{0,\Omega}$}
&  \multicolumn{2}{c|}{$||\bm{p}-G_{h}\bm{p}_{h}||_{0,\Omega}$} \\
\cline{2-7}
               &   Error &   Order  & Error & Order & Error & Order \\
\hline
               800   & 4.585e-1 & 0.9934 & 5.152e-2 & 1.749 & 1.911e-1 & 1.724 \\
             3160       & 2.303e-1 & 0.9981 & 1.533e-2 &  1.823 & 5.783e-2 & 1.551  \\
             12560         & 1.153e-1 & 0.9990 & 4.334e-3 & 1.862 & 1.973e-2 & 1.532 \\
               50080       & 5.769e-2 & 0.9997 & 1.192e-3 & 1.885 & 6.824e-3 & 1.518 \\
              200000       & 2.885e-2 &             & 3.227e-4 &            & 2.383e-3  &          \\
\hline
\end{tabular}
\end{table}

In this section, we test our superconvergence results for $||\Pi_{h}\bm{p}-\bm{p}_{h}||_{0,\Omega}$ and
$||\bm{p}-G_{h}\bm{p}_{h}||_{0,\Omega}$ by the following equation:
\begin{equation*}
\begin{aligned}
&-\Delta u+u=f,\quad\bm{x}\in\Omega,\\
&u=0,\quad\bm{x}\in\partial\Omega,
\end{aligned}
\end{equation*}
where $\Omega=(0,1)\times(0,1)$ is the unit square. Let $u=\sin(2\pi x_{1})\sin(\pi x_{2})$
and $f$ be the corresponding source term. The numerical experiments were performed using MATLAB, R2016.
The linear system resulting from the mixed method \cref{mix:dv} was solved by the operation $\backslash$.
The `nu' in \cref{tableu,tablecc,tablems} stands for the number of unknowns.

We began with the $8\times8$ uniform grid in \cref{grida}, and
computed a sequence of meshes by regular refinement, i.e. partitioning an element into four similar subelements by connecting
the midpoints of each edge. In this case, $\alpha=\sigma=\infty$, $\rho=1$. As shown in \cref{tableu}, the observed orders of convergence
coincide with \cref{superclosep} and \cref{finaltheorem}.

Then we considered the $(\alpha,\sigma)$-grid with $(\alpha,\sigma)=(\infty,1)$ in \cref{gridb}. The mesh was refined regularly.
Although it is not globally uniform, it can be decomposed into four uniform subgrids. Hence the mesh is a piecewise $(\alpha,\sigma)$-grid with $(\alpha,\sigma)=(\infty,\infty)$. By \cref{superclosep},
we still obtain 2nd order of convergence for $||\Pi_{h}\bm{p}-\bm{p}_{h}||_{0,\Omega}$, which was confirmed
by \cref{tablecc}. However, \cref{finaltheorem} cannot be applied to piecewise $(\alpha,\sigma)$-grid. Thus the order 
of convergence for $||\bm{p}-G_{h}\bm{p}_{h}||_{0,\Omega}$ approaches $3/2$ in \cref{tablecc}.

In the last experiment, we generated the initial mesh in \cref{gridc} by `pdetool' in MATLAB and then refined it regularly.
At first glance, it should be an unstructured grid or a mildly structured grid with unknown $\alpha$ and $\sigma$. Surprisingly,
it is indeed a piecewise uniform mesh, since each element in the initial mesh was refined uniformly. On the other hand,
it's not hard to see that the sequence of grids are globally $(\alpha,\sigma)$-meshes with $\alpha=\infty$ and $\sigma=1$ 
asymptotically. As predicted by \cref{superclosep,finaltheorem}, the order of convergence for
$||\Pi_{h}\bm{p}-\bm{p}_{h}||_{0,\Omega}$ approaches 2 while the order of convergence for $||\bm{p}-G_{h}\bm{p}_{h}||_{0,\Omega}$ approaches 3/2. We didn't obtain exact 2nd order convergence for  $||\Pi_{h}\bm{p}-\bm{p}_{h}||_{0,\Omega}$, since the linear system eventually became extremely large. In this case, both time cost and the condition number of the coefficient matrix became unacceptable.

\section{Concluding remarks}
In this paper, we proved optimal order global superconvergence for the lowest order RT element on mildly
structured meshes for general second order elliptic equations. As a byproduct, we also proved superconvergence for the CR nonconforming
method for Poisson's equations. The results in \cref{sec:preli,sec:varerr} are
PDE-independent and applicable to other numerical PDEs using lowest order RT elements. The proof of \cref{superclosedivp} and most steps of the proof of \cref{superclosep} work for higher order mixed finite elements.

In practice, the solution $u$ does not necessarily belong to $W^{3}_{\infty}(\Omega)$ in \cref{mainlemma} or $H^{4+\varepsilon}(\Omega)$ in \cref{superclosep} if $\partial\Omega$ is not smooth enough. Hence our superconvergence estimates become questionable in this case. In fact, the high regularity requirement is a common issue shared by most superconvergence results 
(cf. \cite{BaXu2003,Brandts1994,Ewing1991,HuangXu2008,Xu2003}). There are several possible ways to fix it. First, by modifying the proof of \cref{mainlemma}, one can obtain smaller rate of superconvergence under weaker regularity assumptions. For example, one can easily show $||\bm{p}-G_{h}\bm{p}_{h}||_{0,\Omega}=\mathcal{O}(h^{1+\min(1/2,\alpha,\sigma/2)})$ on $(\alpha,\sigma)$-grids for $u\in H^{3}(\Omega)\cap W_{\infty}^{2}(\Omega)$ if the error occuring on boundary triangles is not canceled in the proof of \cref{mainlemma}. Second, $u$ is smooth on any compact subdomain in $\Omega$. Hence it's meaningful to look for interior estimates (cf. \cite{WahlbinSchatz1995,Wahlbin1995}). 
As far as we know, $u$ should be at least in 
$W_{\infty}^{2}(\Omega)$ to prove interior superconvergence for linear Lagrange elements (cf. \cite{Xu2003}). Of course, the assumption $u\in W_{\infty}^{2}(\Omega)$ may not hold on domains with corners. Third, for $u\in H^{1+\delta}$ with $\delta>0$, it's possible to prove superconvergence recovery for RT elements under adaptive meshes by following the framework of this paper and assuming certain mesh density function which is enough to resolve the singularity, see \cite{WuZhang2007} for the case of Lagrange elements. However, it's difficult to prove that the adaptively refined sequence of meshes actually satisfies the mesh density pattern.

We also point out that $||\Pi_{h}\bm{p}-\bm{p}_{h}||_{0,\Omega}$ might not superconverge in the case of general mixed elements on triangular meshes, since part of finite element basis functions become more localized in higher order methods (cf. \cite{Brandts2000,Li2004}). We will present superconvergence results for higher order mixed elements on mildly structured meshes in another paper.

\section*{Acknowledgments}
The author would like to thank Professor Randolph E. Bank, for his guidance and helpful suggestions pertaining to this work.

\bibliographystyle{siamplain}

\end{document}